\providecommand{\U}[1]{\protect\rule{.1in}{.1in}}
  \newcolumntype{Y}{>{\raggedleft\arraybackslash}X}
\def\vs{\vskip.3cm}
\def\noi{\noindent}
\def\gdeg{G\text{\rm -deg}}
  \lstdefinelanguage{GAP}{
    basicstyle=\ttfamily,
    keywords={true, false, function, return, fail, if, in, while, do, od, else, elif, fi, break, continue},
    keywordstyle=\color{blue}\bfseries,
    otherkeywords={
      >, <, ==
    },
    identifierstyle=\color{black},
    sensitive=True,
    comment=[l]{\#},
    commentstyle=\color{cyan},
    stringstyle=\color{red},
    morestring=[b]',
    morestring=[b]"
  }
\def\ve{\varepsilon}
\DeclareMathOperator{\id}{Id}
  \definecolor{mygreen}{rgb}{0,.66,.05}
  \definecolor{lightyellow}{rgb}{1,1,.80}
\newtheorem{theorem}{Theorem}[section]
\newtheorem{proposition}[theorem]{Proposition}
\newtheorem{lemma}[theorem]{Lemma}
\newtheorem{corollary}[theorem]{Corollary}
\newtheorem{definition}[theorem]{Definition}
\newtheorem{remark-definition}[theorem]{Remark and Definition}
\begin{document}

\title{Subharmonic Solutions in Reversible Non-Autonomous Differential Equations}
\author{Izuchukwu Eze\thanks{{\small Department of Mathematical Sciences the
University of Texas at Dallas Richardson, 75080 USA, amos.eze@utdallas.edu}},
Carlos Garc\'{\i}a-Azpeitia\thanks{{\small Depto. Matem\'{a}ticas y
Mec\'{a}nica IIMAS, Universidad Nacional Aut\'{o}noma de M\'{e}xico, Apdo.
Postal 20-726, 01000 Ciudad de M\'{e}xico, M\'{e}xico, and Department of Mathematics, Xiangnan University, 889 Chen Zhou Da Dao, Chenzhou, Hunan 423000, China.
cgazpe@mym.iimas.unam.mx}}, Wieslaw Krawcewicz\thanks{{\small Applied
Mathematics Center at Guangzhou University, Guangzhou, 510006 China, and
Department of Mathematical Sciences the University of Texas at Dallas
Richardson, 75080 USA. wieslaw@utallas.edu}} and Yanli
Lv\thanks{{\small Department of Mathematical Sciences the China Three Gorges
University, Yichang, China , rebecca\_utd@aliyun.com}}}
\date{}
\maketitle

\abstract{We study the existence of subharmonic solutions in the system $\ddot
{u}(t)=f(t,u(t))$, where $u(t)\in\mathbb{R}^{k}$ and $f$ is an even and
$p$-periodic function in time. Under some additional symmetry conditions on
 the function $f$, the problem of finding $mp$-periodic
solutions can be reformulated in a functional space as a $\Gamma\times\mathbb{Z}_{2}\times D_{m}%
$-equivariant equation, where the group $\Gamma\times\mathbb{Z}_{2}$ acts on the  space $\mathbb{R}^{k}$ and
$D_{m}$ acts on $u(t)$ by  time-shifts and reflection. We apply Brouwer equivariant 
degree to prove the existence of an infinite number of subharmonic solutions
for the function $f$  that satisfies additional hypothesis on linear behavior near zero and the
Nagumo condition at infinity. We also discuss the bifurcation of subharmonic
solutions when the system depends on an extra parameter.
}

\section{Introduction}

In this paper we study the existence of subharmonic solutions of the system
\begin{equation}
\ddot{u}(t)=f(t,u(t)),\;\;u(t)\in\mathbb{R}^{k},\label{eq:sys-non-aut}%
\end{equation}
where $f:\mathbb{R}\times\mathbb{R}^{k}\rightarrow\mathbb{R}^{k}$ is a
continuous function satisfying the following conditions:

\begin{itemize} \itemindent=2pt\labelsep=3pt\labelwidth5pt\itemsep=1pt

\item[($A_{1}$)] ~~ For all $t\in\mathbb{R}$ and $x\in\mathbb{R}^{k}$ we have
$f(t+2\pi,x)=f(t,x)$;

\item[($A_{2}$)] ~~ For all $t\in\mathbb{R}$ and $x\in\mathbb{R}^{k}$ we have
$f(-t,x)=f(t,x)$;

\item[($A_{3}$)] ~~ For all $t\in\mathbb{R}$ and $x\in\mathbb{R}^{k}$ we have
$f(t,-x)=-f(t,x)$.
\end{itemize}

Notice that in condition ($A_{1}$), one could consider $f(t,x)$ being $p$-periodic with
respect to $t$, however by rescaling the time $t$, one can always arrive to a
$2\pi$-periodic function. 
\vs
The problem of finding multiple subharmonic
solutions to \eqref{eq:sys-non-aut}, especially in the case of Hamiltonian
systems of the type
\begin{equation}
\ddot{u}+\nabla F(t,u)=h(t)\label{eq:var}%
\end{equation}
attracted a lot of attention. Let us mention several contributions, beginning
with the classical work \cite{Birk} and followed by the works
\cite{Ding-Zan,Opial}, with numerous other articles that were devoted to this topic (see
\cite{Amb,Conley,Ekel,Fond1,Fond2,Fond3,M-A,M-W1,M-W2,M-T,Offin,Rab1,Rab2,Rab3,Rab4,Serra-Tar1,Serra-Tar2,Will}). It should be pointed out that variational structure of the system
\eqref{eq:var} seems to play crucial role for the application of the
topological and geometric methods.  Regarding the degree theory (cf. \cite{G-M,Mawhin}), it has been
successfully applied to non-Hamiltonian systems in  \cite{B-Z,F}) (see also
\cite{C-R,Kong,W-Q,Z-R}).
\vs
Conditions ($A_{1}$)--($A_{3}$) express the  symmetric properties of the
equation \eqref{eq:sys-non-aut}. Indeed, finding $2\pi m$-periodic solutions to
\eqref{eq:sys-non-aut} leads to an operator which is $D_{m}\times{\mathbb{Z}%
}_{2}$-equivariant. Notice that the ${\mathbb{Z}}_{2}$-action allows us to
make a distinction between constant and non-constant solutions. We do not
require that $f$ is of a gradient-type or has any differentiability
properties, except for the existence of the linearization at $0$.
\vs
Since problem \eqref{eq:sys-non-aut} leads naturally to a
$D_{m}\times{\mathbb{Z}}_{2}$-equivariant equation in functional spaces, one
should ask a question: \textit{what would be the impact of additional
(geometric) symmetries of equation \eqref{eq:sys-non-aut} on the existence and
multiplicity of subharmonic solutions?} Therefore, it is natural to assume
that the system \eqref{eq:sys-non-aut} has additional symmetries
represented by a group $\Gamma$. In this paper, we assume that  $\Gamma$ is  a
finite group acting on vectors in $\mathbb{R}^{k}$ by permuting their
coordinates (see assumption ($A_{4}$)), i.e. the functional equation has the
symmetries
\[
G:=\Gamma\times D_{m}\times{\mathbb{Z}}_{2}.
\]

\vs
We use Brouwer $G$-equivariant degree to establish the existence and
multiplicity of subharmonic ${2\pi m}$-periodic solutions to
\eqref{eq:sys-non-aut}. We make some additional assumptions in order to
illustrate an application of the Brouwer equivariant degree to this systems of
differential equations. First we assume (see the assumption ($A_{5}$)) that
the linearization at $0$ exists and is non-degenerate. We also impose on $f$
the Nagumo growth condition, which implies the existence of \textit{a
priori} bounds on periodic solutions to \eqref{eq:sys-non-aut}.
\vs
We explore in detail two cases of systems of equation: (a) non-symmetric (with
$\Gamma$ being trivial), and (b) with additional symmetries $\Gamma=D_{3}$ and
$\Gamma=D_{5}$. The group $D_{3}$ is the simplest non-abelian group, but it  already makes
a significant  impact on the existence of multiple subharmonic solutions. Since the
computations of Brouwer $G$-equivariant degree can be technically challenging,
in order to overcome these difficulties we use the equivariant degree package
\texttt{EquiDeg} for GAP programming, which was created by Hao-Pin Wu and is
available from \url{https://github.com/psistwu/GAP-equideg}
\vs
As the assumption ($A_{3}$) implies that $f(t,0)=0$, 
so \eqref{eq:sys-non-aut}  admits the (trivial) solution 
 $u(t)=0$. It is interesting to study a parametrized by $\alpha$ modification of the system \eqref{eq:sys-non-aut} (see system \eqref{eq:sys-non-aut-bif}) for which the existence of 
 non-constant branches of subharmonic $2\pi m$-periodic solutions  bifurcating  from $0$ can be analyzed. We 
 apply the Brouwer $G$-equivariant degree method to study the symmetric bifurcation problem for \eqref{eq:sys-non-aut-bif}. We establish 
 the existence of multiple branches  of subharmonic solutions  emerging from the trivial solutions as the parameter $\alpha$ crosses a critical value.
Theoretical results are  illustrated by an example involving concrete symmetries for the system \eqref{eq:sys-non-aut-bif}. 
\vs
\section{Reversible Non-Autonomous Differential Equations}

\label{sec:reversible1}

We are interested in studying the existence of the so-called
\textit{subharmonic} periodic solutions to \eqref{eq:sys-non-aut}, i.e. in
finding non-constant solutions, which for some integer $m\geq3$ satisfy
\begin{equation}
u(t)=u(t+{2\pi m}),\;\;\dot{u}(t)=\dot{u}(t+{2\pi m}).\label{eq:pm-per}%
\end{equation}
We also consider a subgroup $\Gamma\leq S_{k}$  acting on $V:=\mathbb{R}^{k}$ by permuting the coordinates of vectors $x=(x_{1},x_{2},\dots,x_{k}%
)^{T}\in \mathbb{R}^{k}$, i.e. for $\sigma\in S_k$ 
\begin{equation}
\sigma x=\sigma(x_{1},x_{2},\dots,x_{k})^{T}:=(x_{\sigma(1)},x_{\sigma
(2)},\dots,x_{\sigma(k)})^{T}.\label{eq:G-act-V}%
\end{equation}
Clearly, the space $V:=\mathbb{R}^{k}$ equipped with this $\Gamma$-action is
an orthogonal $\Gamma$-represen\-tation. As it is also of our interest to
study the impact of the symmetries $\Gamma$ on the existence of subharmonic
solutions to \eqref{eq:sys-non-aut}, we introduce the following condition:

\begin{itemize}
 \itemindent=2pt\labelsep=3pt\labelwidth5pt\itemsep=1pt

\item[($A_{4}$)] ~~ For all $t\in\mathbb{R}$, $x\in\mathbb{R}^{k}$ and
$\sigma\in\Gamma$, we have $f(t,\sigma x)=\sigma f(t,x)$.
\end{itemize}

The condition ($A_{4}$) implies that $f$ is $\Gamma$-equivariant, i.e. the
system \eqref{eq:sys-non-aut} admits $\Gamma$-symmetries. \vskip.3cm

\subsection{\textbf{Reformulation of \eqref{eq:sys-non-aut} in Functional
Spaces}}

Consider the Banach space $\mathbb{F}:=C_{2\pi m}(\mathbb{R},V)$ of all $2\pi
m$-periodic continuous $V$-valued functions with the usual sup-norm
\[
\|\varphi\|_{\infty}:= \max_{t\in\mathbb{R}} |\varphi(t)|, \; \varphi
\in\mathbb{F},
\]
and denote by $\mathbb{E}:=C^{2}_{2\pi m}(\mathbb{R},V)$ the Banach space of
all $2\pi m$-periodic $C^{2}$-differentiable $V$-valued functions with the
norm $\|\cdot\|:=\|\cdot\|_{2,\infty}$ given by
\begin{equation}
\label{eq:sup-norm2}\|u\|=\|u\|_{2,\infty}:=\max\{ \|u\|_{\infty}, \|\dot u
\|_{\infty},\|\ddot u\|_{\infty}\}, \quad u\in\mathbb{E}.
\end{equation}

Notice that the natural injection operator $\mathfrak{j}:\mathbb{E}%
\rightarrow\mathbb{F}$, $(\mathfrak{j}(u))(t):=u(t)$, $t\in\mathbb{R}$, is a
compact linear operator. We define the operator $L:\mathbb{E}\rightarrow
\mathbb{F}$ by $L(u)(t):=\ddot{u}(t)-u(t)$, $u\in\mathbb{E}$, and the
continuous map $N_{f}:\mathbb{F}\rightarrow\mathbb{F}$ by $N_{f}%
(\varphi)(t)=f(t,\varphi(t))$, $\varphi\in\mathbb{F}$. Then, the system
\eqref{eq:sys-non-aut} is equivalent to the following operator equation
\begin{equation}
Lu=N_{f}(\mathfrak{j}(u))-\mathfrak{j}(u),\quad u\in\mathbb{E}%
.\label{eq:op-eq}%
\end{equation}
Since the operator $L$ is an isomorphism, we can rewrite \eqref{eq:op-eq} as
\[
u=L^{-1}\Big(N_{f}(\mathfrak{j}(u))-\mathfrak{j}(u)\Big),\quad u\in\mathbb{E}.
\]
Define the map $\mathscr F:\mathbb{E}\rightarrow\mathbb{E}$, by
\begin{equation}
\mathscr F(u):=u-L^{-1}\Big(N_{f}(\mathfrak{j}(u))-\mathfrak{j}(u)\Big),\quad
u\in\mathbb{E}.\label{eq:non-aut-F}%
\end{equation}
Then $u\in\mathbb{E}$ is a solution to \eqref{eq:sys-non-aut} if and only if
\begin{equation}
\mathscr F(u)=0.\label{eq:fix-eq}%
\end{equation}
One can easily observe that, by compactness of $\mathfrak{j}$, the map
$\mathscr F$ is a completely continuous field on $\mathbb{E}$. \vskip.3cm
Obviously, by the condition ($A_{3}$), we have $f(t,0)=0$ for all
$t\in\mathbb{R}$, thus $\mathscr F(0)=0$, i.e. the zero function is the
\textit{trivial solution} to \eqref{eq:sys-non-aut}. In what follows we are
interested in finding non-trivial (i.e. non-constant) $2\pi m$-periodic
solutions to \eqref{eq:sys-non-aut}. The group $G:=\Gamma\times D_{m}%
\times{\mathbb{Z}}_{2}$ acts on the space $\mathbb{E}$ by
\begin{align*}
(\sigma,\gamma^{j},\pm1)u(t) &  :=\pm\sigma u(t+2\pi  j),\quad j=0,1,\dots
,m-1,\;\;\sigma\in\Gamma,\;\gamma=e^{\frac{i2\pi}{m}},\\
(\sigma,\kappa,\pm1)u(t) &  :=\pm\sigma u(-t),\quad t\in\mathbb{R}%
,\;\;u\in\mathbb{E},
\end{align*}
thus $\mathbb{E}$ is an isometric Banach $G$-representation. One can easily
verify that the properties ($A_{1}$)---($A_{4}$) imply that $\mathscr F$ is $G
$-equivariant. \vskip.3cm
\vs 
\subsection{\textbf{$G$-Isotypic Decomposition of $\mathbb{E}$}}

Actually, $\mathbb{E}$ is an isometric Banach $\Gamma\times O(2)\times
{\mathbb{Z}}_{2}$-representation, with $O(2)$-action given by
\[
e^{i\theta}u(t)=u\left(  t+\theta m\right)  ,\;\;\kappa u(t)=u(-t),\quad
u\in\mathbb{E},
\]
and $\Gamma$-action given by $(\gamma u)(t)=\gamma u(t)$, $\gamma\in\Gamma$,
$t\in\mathbb{R}$. Using the $\Gamma\times O(2)\times{\mathbb{Z}}_{2}$-action
on $\mathbb{E}$, one can easily recognize the $\Gamma\times D_{m}%
\times{\mathbb{Z}}_{2}$-isotypic decomposition of $\mathbb{E}$. Indeed, by
using the usual Fourier series expansions of functions $u\in\mathbb{E}$, we
have the following $\Gamma\times O(2)\times{\mathbb{Z}}_{2} $-isotypic
decomposition of $\mathbb{E}$
\begin{equation}
\mathbb{E}=\overline{\bigoplus_{j=0}^{\infty}\bigoplus_{l=0}^{\mathfrak{r}%
}\mathbb{V}_{j,l}},\label{eq:G-iso-App2}%
\end{equation}
where
\[
\mathbb{V}_{j,l}=\left\{  u\in\mathbb{E}:u(t)=\cos(jt/m)a+\sin
(jt/m)b,\;a,\,b\in V_{l}\right\}  .
\]
and
\[
V=V_{0}\oplus V_{1}\oplus\dots\oplus V_{\mathfrak{r}},
\]
is a $\Gamma$-isotypic decomposition of $V$, with the component $V_{l}$ being
modeled on the $\Gamma$-irreducible representation $\mathcal{U}_{l}$, $0\leq
l\leq\mathfrak{r}$.

\begin{proposition}
For $j>0$, the $\Gamma\times O(2)\times{\mathbb{Z}}_{2}$-invariant subspace
$\mathbb{V}_{j,l}$ can be identified with the complexification $V_{l}%
^{c}:=V_{l}\oplus iV_{l}$ of $V_{l}$, on which $O(2)$ acts by
\[
e^{i\theta}(a+ib):=e^{-ij\theta}\cdot(a+ib),\;\;\kappa(a+ib)=a-ib,\quad
a,\,b\in V_{l},
\]
where `$\cdot$' stands for complex multiplication.
\end{proposition}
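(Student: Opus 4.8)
The plan is to write down an explicit linear identification $\Phi\colon\mathbb{V}_{j,l}\to V_{l}^{c}$ and to check directly that it intertwines the two group actions. Every element of $\mathbb{V}_{j,l}$ has the form $u(t)=\cos(jt/m)\,a+\sin(jt/m)\,b$ with $a,b\in V_{l}$, so the natural candidate is $\Phi(u):=a+ib$. First I would verify that this is well defined, i.e. that the pair $(a,b)$ is uniquely recovered from $u$. This is where the hypothesis $j>0$ enters: evaluating at $t=0$ gives $a=u(0)$, and differentiating once gives $b=(m/j)\,\dot u(0)$, so the coefficients are determined by $u$. Linearity of $\Phi$ is immediate, surjectivity is clear since any $a,b\in V_{l}$ produce such a $u$, and injectivity follows from the uniqueness just noted; hence $\Phi$ is a linear isomorphism onto $V_{l}^{c}=V_{l}\oplus iV_{l}$.

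Next I would compute the effect of the $SO(2)$-action. By definition $(e^{i\theta}u)(t)=u(t+\theta m)$, so $(e^{i\theta}u)(t)=\cos(jt/m+j\theta)\,a+\sin(jt/m+j\theta)\,b$. Expanding with the angle-addition formulas and collecting the coefficients of $\cos(jt/m)$ and $\sin(jt/m)$ shows that $e^{i\theta}u$ again lies in $\mathbb{V}_{j,l}$, with new coefficients $a'=\cos(j\theta)a+\sin(j\theta)b$ and $b'=-\sin(j\theta)a+\cos(j\theta)b$. The only genuinely computational step is then to recognize that $\Phi(e^{i\theta}u)=a'+ib'=e^{-ij\theta}(a+ib)$, which follows from the identities $\cos(j\theta)-i\sin(j\theta)=e^{-ij\theta}$ and $\sin(j\theta)+i\cos(j\theta)=i\,e^{-ij\theta}$. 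This is exactly the claimed action $e^{i\theta}(a+ib)=e^{-ij\theta}\cdot(a+ib)$.

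Finally I would treat the reflection and the $\Gamma$-action, both of which are immediate. Since $(\kappa u)(t)=u(-t)=\cos(jt/m)\,a-\sin(jt/m)\,b$, the coefficients transform as $(a,b)\mapsto(a,-b)$, so $\Phi(\kappa u)=a-ib$, i.e. $\kappa$ becomes complex conjugation, as asserted. For $\gamma\in\Gamma$ we have $(\gamma u)(t)=\cos(jt/m)\,\gamma a+\sin(jt/m)\,\gamma b$; since $V_{l}$ is $\Gamma$-invariant this stays in $\mathbb{V}_{j,l}$ and $\Phi(\gamma u)=\gamma a+i\gamma b=\gamma(a+ib)$, confirming that $\Phi$ is also $\Gamma$-equivariant for the complexified action on $V_{l}^{c}$. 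I do not expect any real obstacle here: the content is entirely the explicit bookkeeping of the two trigonometric coefficients, and the one point requiring attention is the well-definedness of $\Phi$, which is precisely where the hypothesis $j>0$ is used.
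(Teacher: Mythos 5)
Your proof is correct and follows essentially the same route as the paper: an explicit linear identification between $\mathbb{V}_{j,l}$ and $V_{l}^{c}$ (the paper writes the inverse map $\psi_{j}(a+ib)(t)=\cos(jt/m)a+\sin(jt/m)b$) together with the angle-addition formulas to match $e^{-ij\theta}\cdot(a+ib)$ with the time-shift by $m\theta$. You are in fact slightly more complete than the paper, which only records the $SO(2)$ computation and leaves the well-definedness (your use of $j>0$), the $\kappa$-conjugation, and the $\Gamma$-equivariance implicit.
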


\begin{proof}
Define the real isomorphism $\psi_{j}:V_{l}^{c}\rightarrow\mathbb{V}_{j,l}$ by
$\psi(a+ib)(t)=\cos(jt/m)a+\sin(jt/m)b$, where \ $a$, $b\in V_{l}$. Then for
$\mathbf{z}:=a+ib$ we have
\begin{align*}
\psi_{j}\big(e^{i\theta}(\mathbf{z}\big)) &  =\psi_{j}\big(e^{i\theta
}(a+ib)\big)\\
&  =\psi_{j}\Big(\cos(j\theta)a+\sin(j\theta)b+i(-\sin(j\theta)a+\cos
(j\theta)b)\Big)\\
&  =\cos(jt/m)(\cos(j\theta)a+\sin(j\theta)b)+\sin(jt/m)(-\sin(j\theta
)a+\cos(j\theta)b)\\
&  =\cos(\tfrac{j}{m}(t+m\theta))a+\sin(\tfrac{j}{m}(t+m\theta))b\\
&  =e^{i\theta}(\cos(jt/m)a+\sin(jt/m)b)=e^{i\theta}\psi_{j}(a+ib)=e^{i\theta
}\psi_{j}(\mathbf{z}).
\end{align*}
\vskip.3cm
\end{proof}

Consider $j$-th irreducible $O(2)$-representation $\mathcal{W}_{j}%
\simeq{\mathbb{C}}$, $j>0$, where for $e^{i\theta}\in SO(2)$, $e^{i\theta
}z:=e^{i\theta j}\cdot z$ and $\kappa z:=\overline z$, $z\in{\mathbb{C}}$.
Clearly, since $D_{m} \le O(2)$, $\mathcal{W} _{j}$ is a $D_{m}$-representation.
Put $\mathfrak{s}:=\left\lfloor \frac{m+1}2\right\rfloor $. The irreducible
$D_{m}$-representations $\mathcal{V} _{i}$ are:

\begin{itemize}
\item if $i=0$, then $\mathcal{V}_{0}\simeq\mathbb{R}$ with the trivial
$D_{m}$-action;

\item if $0<i<m/2$, then $\mathcal{V}_{i}\simeq\mathbb{R}^{2}= {\mathbb{C}}$,
where $\gamma z=\gamma^{i}\cdot z$, $\kappa z=\overline z$, $z\in{\mathbb{C}}$;

\item if $i=\mathfrak{s}$, then $\mathcal{V} _{\mathfrak{s}}\simeq\mathbb{R} $
with the $D_{m}$-action $\gamma x=x$, $\kappa x=-x$, $x\in\mathbb{R}$;

\item if $m$ is even, then we have the irreducible $D_{m}$-representation
$\mathcal{V}_{\mathfrak{s}+1}\simeq\mathbb{R}$ with the $D_{m}$-action $\gamma
x=-x$, $\kappa x=x$, $x\in\mathbb{R}$;

\item if $m$ is even, then we have the representation $\mathcal{V}%
_{\mathfrak{s}+2}\simeq\mathbb{R}$ with $D_{m}$-action $\gamma x=-x$, $\kappa
x=-x$, $x\in\mathbb{R}$.
\end{itemize}

For the group $D_{m}\times{\mathbb{Z}}_{2}$,  the corresponding irreducible
representations (with non-trivial ${\mathbb{Z}}_{2}$-action) will be denoted
by $\mathcal{V}_{i}^{-}$.

\begin{proposition}
The $D_{m}$-representation $\mathcal{W}_{j}$ has the following $D_{m}$-isotypic decomposition
\begin{itemize}
\item[$\bullet$] $\mathcal{W} _{mj} \simeq\mathcal{V} _{0}\oplus\mathcal{V}
_{\mathfrak{s}}$,

\item[$\bullet$] for $0<i<\frac m2$, $\mathcal{W} _{mj+i}\simeq\mathcal{W}
_{mj-i}\simeq\mathcal{V} _{i}$,

\item[$\bullet$] if $m$ is even, $\mathcal{W} _{mj-\frac m2} \simeq\mathcal{V}
_{\mathfrak{s}+1} \oplus\mathcal{V} _{\mathfrak{s}+2}$.
\end{itemize}
\end{proposition}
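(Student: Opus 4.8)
The plan is to work directly with the explicit $D_m$-action that $\mathcal{W}_n \simeq \mathbb{C}$ inherits from $O(2)$: the generating rotation $\gamma = e^{i2\pi/m}$ acts by $z \mapsto e^{2\pi i n/m} z$ and the reflection $\kappa$ by $z \mapsto \overline z$. The decisive remark is that this action depends on $n$ only through the multiplier $e^{2\pi i n/m}$, hence only through the residue of $n$ modulo $m$; consequently it suffices to evaluate this multiplier in the three situations named in the statement and to match the resulting $\mathbb{C}$-action against the list of irreducible $D_m$-representations $\mathcal{V}_i$.

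First I would treat the two-dimensional (generic) case. For $n = mj + i$ with $0 < i < m/2$ the multiplier is $e^{2\pi i n/m} = \gamma^i$, so $\gamma z = \gamma^i z$ and $\kappa z = \overline z$, which is by definition the representation $\mathcal{V}_i$; thus $\mathcal{W}_{mj+i} \simeq \mathcal{V}_i$. For $n = mj - i$ the multiplier is $\gamma^{-i} = \overline{\gamma^i}$, and here the conjugation map $z \mapsto \overline z$ is a $D_m$-isomorphism onto $\mathcal{V}_i$, since it intertwines multiplication by $\gamma^{-i}$ with multiplication by $\gamma^i$ and commutes with $\kappa$; hence $\mathcal{W}_{mj-i} \simeq \mathcal{V}_i$ as well.

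Next I would handle the two cases in which $\gamma$ acts by a real scalar, so that $\mathcal{W}_n$ splits as a real $D_m$-representation $\mathbb{C} = \mathbb{R} \oplus i\mathbb{R}$. For $n = mj$ the multiplier equals $1$, so $\gamma$ acts trivially and $\kappa$ is conjugation: the real axis gives $\mathcal{V}_0$ (both generators acting trivially) and the imaginary axis gives $\mathcal{V}_{\mathfrak{s}}$ (trivial rotation, $\kappa$ acting by $-1$), whence $\mathcal{W}_{mj} \simeq \mathcal{V}_0 \oplus \mathcal{V}_{\mathfrak{s}}$. For $m$ even and $n = mj - m/2$ the multiplier equals $-1$, so $\gamma$ acts by $z \mapsto -z$ and $\kappa$ by conjugation: the real axis carries the action with $\gamma$ by $-1$ and $\kappa$ by $+1$, namely $\mathcal{V}_{\mathfrak{s}+1}$, and the imaginary axis carries the action with both $\gamma$ and $\kappa$ by $-1$, namely $\mathcal{V}_{\mathfrak{s}+2}$, giving $\mathcal{W}_{mj-m/2} \simeq \mathcal{V}_{\mathfrak{s}+1} \oplus \mathcal{V}_{\mathfrak{s}+2}$.

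The argument is elementary and I expect no genuine difficulty beyond careful bookkeeping. The one step that is not a mere equality of actions is the isomorphism $\mathcal{W}_{mj-i} \simeq \mathcal{V}_i$, which must be realized through the conjugation map rather than asserted outright; the accompanying subtlety is to verify, in the two splitting cases, that the one-dimensional real summands obtained by restricting $\gamma$ and $\kappa$ to the real and imaginary axes carry exactly the sign patterns that characterize $\mathcal{V}_0, \mathcal{V}_{\mathfrak{s}}$ and $\mathcal{V}_{\mathfrak{s}+1}, \mathcal{V}_{\mathfrak{s}+2}$.
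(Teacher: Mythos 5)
Your verification is correct and complete: the three cases are exactly the computation of the multiplier $e^{2\pi i n/m}$ modulo $m$, the intertwining by complex conjugation for $\mathcal{W}_{mj-i}$, and the splitting into the $D_m$-invariant real and imaginary axes when $\gamma$ acts by $\pm 1$. The paper states this proposition without proof, so there is nothing to compare against; your argument supplies the omitted (standard) verification and contains no gaps.
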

\vs

For $j>0$ and $0\leq l\leq\mathfrak{r}$, we put
\begin{align*}
\mathbb{V}_{jm,l}^{+} &  =\left\{  u\in\mathbb{E}:u(t)=\cos(jt)a,\;a\in
V_{l}\right\}  ,\\
\mathbb{V}_{jm,l}^{-} &  =\left\{  u\in\mathbb{E}:u(t)=\sin(jt)b,\;b\in
V_{l}\right\}  ,\\
\mathbb{V}_{jm-\frac{m}{2},l}^{+} &  =\left\{  u\in\mathbb{E}:u(t)=\cos
((j-\tfrac{1}{2})t)a,\;a\in V_{l}\right\}  ,\\
\mathbb{V}_{jm-\frac{m}{2},l}^{-} &  =\left\{  u\in\mathbb{E}:u(t)=\sin
((j-\tfrac{1}{2})t)b,\;b\in V_{l}\right\}  .
\end{align*}
Therefore, we have the following $\Gamma\times D_{m}\times{\mathbb{Z}}_{2}%
$-isotypic decomposition of the space $\mathbb{E}$:
\[
\mathbb{E}=\bigoplus_{l=0}^{\mathfrak{r}}\bigoplus_{i=0}^{\mathfrak{s}^{\ast}%
}\mathcal{E}_{i,l}^{-},\quad s^{\ast}:=%
\begin{cases}
\mathfrak{s} & \text{ if $m$ is odd}\\
\mathfrak{s}+2 & \text{ if $m$ is even}%
\end{cases}
,\quad\mathfrak{s}=\left\lfloor \frac{m+1}{2}\right\rfloor ,
\]
where
\[
\mathcal{E}_{0,l}^{-}=\mathbb{V}_{0,l}\oplus\overline{\bigoplus_{j=1}^{\infty
}\mathbb{V}_{mj,l}^{+}},\quad\mathcal{E}_{\mathfrak{s},l}^{-}=\overline
{\bigoplus_{j=1}^{\infty}\mathbb{V}_{mj,l}^{-}},
\]
for $0<i<\frac{m}{2}$
\[
\mathcal{E}_{i,l}^{-}=\overline{\bigoplus_{j=0}^{\infty}\mathbb{V}_{mj+i,l}%
}\oplus\overline{\bigoplus_{j=1}^{\infty}\mathbb{V}_{mj-i,l}},
\]
and if $m$ is even then
\[
\mathcal{E}_{\mathfrak{s}+1,l}^{-}=\overline{\bigoplus_{j=1}^{\infty
}\mathbb{V}_{mj-\frac{m}{2},l}^{+}},\quad\mathcal{E}_{\mathfrak{s}+2,l}%
^{-}=\overline{\bigoplus_{j=1}^{\infty}\mathbb{V}_{mj-\frac{m}{2},l}^{-}}.
\]

The component $\mathcal{E}_{i,l}^{-}$ ($0\leq i\leq\mathfrak{s}^{\ast}$,
$0\leq l\leq\mathfrak{r}$) is modeled on the irreducible $\Gamma\times
D_{m}\times{\mathbb{Z}}_{2}$-representation
\[
\mathcal{V}_{i,l}^{-}:=\mathcal{V}_{i}^{-}\otimes\mathcal{U}_{l}.
\]
Since the operator $L$ is $O(2)\times{\mathbb{Z}}_{2}$-equivariant
isomorphism, thus $L(\mathbb{V}_{j})=\mathbb{V}_{j}$ and $L|_{\mathbb{V}_{j}%
}=-(j^{2}/m^{2}+1)\id_{\mathbb{V}_{j}}$. \vskip.3cm

\subsection{\textbf{Linearization of Equation \eqref{eq:sys-non-aut} at $0$
:}}

We make the following additional assumption

\begin{itemize}
\item[($A_{5}$)] ~~ There exists a \textbf{symmetric matrix} $A:\mathbb{R}%
^{k}\rightarrow\mathbb{R}^{k}$ such that
\begin{equation}
\lim_{x\rightarrow0}\frac{f(t,x)-Ax}{|x|}=0\label{eq:lin-1}%
\end{equation}
uniformly with respect to $t\in\mathbb{R}$, and for all integers $j\geq0$ and
$\mu\in\sigma(A)$,
\begin{equation}
\frac{j^{2}}{m^{2}}+\mu\not =0.\label{eq:lin-2}%
\end{equation}

\end{itemize}

We define the linear operator $\mathscr A:\mathbb{E}\rightarrow\mathbb{E}$
(associated with $A:\mathbb{R}^{k}\rightarrow\mathbb{R}^{k}$) by
\begin{equation}
\mathscr Au:=u-L^{-1}\Big(N_{A}(\mathfrak{j}(u)-\mathfrak{j}(u)\Big),\quad
u\in\mathbb{E},\label{eq:Lin-A}%
\end{equation}
where $N_{A}(\varphi)(t):=A(\varphi(t))$, $t\in\mathbb{R}$, $\varphi\in
C_{2\pi m}(\mathbb{R};V)$. Under the assumption ($A_{5}$) the operator
$\mathscr A:\mathbb{E}\rightarrow\mathbb{E}$ given by \eqref{eq:Lin-A} is an
isomorphism and $D\mathscr F(0)=\mathscr A$.

\begin{lemma}
\label{lem:DF(0)} Assume that $f:\mathbb{R}\times\mathbb{R}^{k}\to
\mathbb{R}^{k}$ satisfies the conditions ($A_{1}$)---($A_{5}$). Then there
exists $\varepsilon>0$ such that the $G$-map $\mathscr F:\mathbb{E}%
\to\mathbb{E}$ (given by \eqref{eq:non-aut-F}) and $\mathscr A:\mathbb{E}%
\to\mathbb{E}$ (given by \eqref{eq:Lin-A}) are $\Omega_{\varepsilon}%
$-admissibly $G$-homotopic (here $\Omega_{\varepsilon}:=B_{\varepsilon}(0)$ in
$\mathbb{E}$).
\end{lemma}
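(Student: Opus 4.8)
The plan is to connect $\mathscr A$ and $\mathscr F$ by the affine homotopy
\[
h(\tau,u):=(1-\tau)\,\mathscr A u+\tau\,\mathscr F(u),\qquad \tau\in[0,1],\ u\in\mathbb E,
\]
and to show that for a suitably small $\varepsilon>0$ this homotopy is $\Omega_\varepsilon$-admissible, i.e. $G$-equivariant, a completely continuous field, and non-vanishing on $\partial\Omega_\varepsilon=\{u:\|u\|=\varepsilon\}$. Equivariance is immediate: both $\mathscr A$ and $\mathscr F$ are $G$-equivariant (as established above), so their convex combination is $G$-equivariant for each $\tau$. Likewise, writing
\[
h(\tau,u)=u-\Big[(1-\tau)L^{-1}\big(N_A(\mathfrak j(u))-\mathfrak j(u)\big)+\tau L^{-1}\big(N_f(\mathfrak j(u))-\mathfrak j(u)\big)\Big],
\]
the bracketed term factors through the compact injection $\mathfrak j$, so $h(\tau,\cdot)$ is a completely continuous field for every $\tau$ and $h$ is jointly continuous. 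Thus the only substantive point is the non-vanishing on $\partial\Omega_\varepsilon$.

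For that, I would first record the identity
\[
\mathscr F(u)-\mathscr A u=-L^{-1}\big(N_f(\mathfrak j(u))-N_A(\mathfrak j(u))\big),
\]
which follows by subtracting the defining formulas \eqref{eq:non-aut-F} and \eqref{eq:Lin-A}, the terms $L^{-1}\mathfrak j(u)$ cancelling. The homotopy therefore reads $h(\tau,u)=\mathscr A u-\tau L^{-1}\big(N_f(\mathfrak j(u))-N_A(\mathfrak j(u))\big)$. Next I would exploit the linearization hypothesis ($A_5$): given $\eta>0$, the uniform limit \eqref{eq:lin-1} supplies $\delta>0$ such that $|f(t,x)-Ax|\le\eta|x|$ whenever $|x|<\delta$, uniformly in $t$. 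Consequently, for $\|u\|_\infty<\delta$ one has $\|N_f(\mathfrak j(u))-N_A(\mathfrak j(u))\|_\infty\le\eta\|u\|_\infty\le\eta\|u\|$.

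Finally I would combine this with the coercivity of $\mathscr A$. Since $\mathscr A$ is an isomorphism, there is $c>0$ with $\|\mathscr A u\|\ge c\|u\|$ for all $u$; and since $L^{-1}:\mathbb F\to\mathbb E$ is bounded, say with norm $M$, the estimate above gives, for $\|u\|\le\delta$ and any $\tau\in[0,1]$,
\[
\|h(\tau,u)\|\ge\|\mathscr A u\|-\tau\,M\,\|N_f(\mathfrak j(u))-N_A(\mathfrak j(u))\|_\infty\ge(c-M\eta)\|u\|.
\]
Choosing $\eta<c/M$ at the outset and then taking $\varepsilon\le\delta$ makes the right-hand side equal to $(c-M\eta)\varepsilon>0$ on $\partial\Omega_\varepsilon$, so $h$ never vanishes there and the two maps are $\Omega_\varepsilon$-admissibly $G$-homotopic. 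The main obstacle is not conceptual but the correct bookkeeping of the two norms: the sublinear bound from ($A_5$) is naturally controlled in the sup-norm $\|\cdot\|_\infty$, whereas the coercivity of $\mathscr A$ lives in the stronger $C^2$-norm $\|\cdot\|$, and one must route the former through the bounded operator $L^{-1}$ (together with the inequality $\|u\|_\infty\le\|u\|$) to make the two compatible — which is exactly why the factor $M=\|L^{-1}\|$ appears in the threshold for $\eta$.
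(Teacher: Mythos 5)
Your proof is correct, and it reaches the conclusion by a genuinely different verification of admissibility than the paper. Both arguments use the identical linear homotopy $(1-\tau)\mathscr A u+\tau\mathscr F(u)$, but the paper checks non-vanishing on small spheres by contradiction: it assumes zeros $u_n\to 0$, normalizes $v_n:=u_n/\|u_n\|$, uses the differentiability at $0$ from ($A_5$) to conclude $\mathscr A v_n\to 0$, and then exploits the splitting $\mathscr A=\id-\mathscr K$ with $\mathscr K$ compact to extract a convergent subsequence $v_n\to v_o$ with $\|v_o\|=1$ and $v_o\in\Ker\mathscr A$, contradicting the invertibility of $\mathscr A$. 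You instead run a direct quantitative estimate: coercivity $\|\mathscr A u\|\ge c\|u\|$ (via the bounded inverse theorem applied to the isomorphism $\mathscr A$), the sublinear bound $\|N_f(\mathfrak j(u))-N_A(\mathfrak j(u))\|_\infty\le\eta\|u\|$ from \eqref{eq:lin-1}, and boundedness of $L^{-1}:\mathbb F\to\mathbb E$, yielding $\|h(\tau,u)\|\ge(c-M\eta)\|u\|$ on $\partial\Omega_\varepsilon$ once $\eta<c/M$ and $\varepsilon<\delta$. Your route buys explicit constants and avoids the subsequence extraction, at the price of invoking the open mapping theorem twice (for $L^{-1}$ and for $\mathscr A^{-1}$); the paper's route needs only injectivity of $\mathscr A$ together with compactness of $\mathscr K$, which is the structure already in hand. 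Your attention to the two norms ($\|\cdot\|_\infty$ versus $\|\cdot\|_{2,\infty}$) and your explicit treatment of equivariance and complete continuity of the homotopy are points the paper leaves implicit; the only cosmetic refinement is to take $\varepsilon$ strictly less than $\delta$ so that $\|u\|_\infty<\delta$ holds on the closed ball.
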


\begin{proof}
Define the linear homotopy $\mathfrak{H}:[0,1]\times\mathbb{E}\to\mathbb{E} $
as $\mathfrak{H}(\lambda,u):=(1-\lambda)\mathscr Au+\lambda\mathscr F(u)$,
$u\in\mathbb{E}$, and suppose for contradiction that there exists a sequence
$\{\lambda_{n},u_{n}\}$ such that $u_{n}\not =0$, $\lambda_{n}\to\lambda_{o}$
and $u_{n}\to0$ as $n\to\infty$\,, then we have
\begin{align*}
0 & =\mathfrak{H}(\lambda_{n},u_{n})=(1-\lambda_{n})\mathscr Au_{n}%
+\lambda_{n} F(u_{n})\\
& =\mathscr A(u_{n})+\lambda_{n}(\mathscr F(u_{n})-\mathscr Au_{n}).
\end{align*}
Put $v_{n}:=\frac{u_{n}}{\|u_{n}\|}$. Then
\[
0=\mathscr A v_{n} +\lambda_{n}\frac{\mathscr F(u_{n})-\mathscr A u_{n}%
}{\|u_{n}\|}.
\]
Since $\|u_{n}\|\to0$ and $\lambda_{n}$ is bounded, thus
\[
\lim_{n\to\infty} \frac{\mathscr F(u_{n})-\mathscr A u_{n}}{\|u_{n}\|} =0,
\]
which implies
\[
0=\lim_{n\to\infty}(\mathscr A v_{n}).
\]
On the other hand, since $\mathscr A=\id -\mathscr K$, where
$\mathscr K:=L^{-1}(A\mathfrak{j}-\mathfrak{j})$ is a compact operator, one
can assume (by passing to a subsequence) that $\mathscr K v_{n}\to v_{o}$,
which implies $v_{n}\to v_{0}$ and $\|v_{o}\|=1$, so $v_{o}\in
\text{\textrm{Ker\,}} \mathscr A$, but this is a contradiction with ($A_{5}$).
\end{proof}

\vs

\subsection{\textbf{Nagumo Growth Condition:}}

The following condition is often referred to as the \textit{Nagumo growth
condition}:

\begin{itemize}
\item[($A_{6}$)] ~~ There exists a constant $M>0$ such that
\[
\forall_{t\in\mathbb{R}}\;\; \forall_{x\in\mathbb{R}^{k}} \;\; |x|\ge M\;\;
\Rightarrow\;\; f(t,x)\bullet x>0.
\]

\end{itemize}
\vs

We consider the following parametrized (by $\lambda\in\lbrack0,1]$)
modification of system \eqref{eq:sys-non-aut}:
\begin{equation}%
\begin{cases}
\ddot{u}(t)=\lambda f(t,u(t))+(1-\lambda)u(t),\quad t\in\mathbb{R},\;u(t)\in
V,\\
u(t)=u(t+{2\pi m}),\;\dot{u}(t)=\dot{u}(t+{2\pi m}).
\end{cases}
\label{eq:non-aut-lambda}%
\end{equation}
\vs

Then we have:

\begin{lemma}
\label{lem-Nagumo} Assume that $f:\mathbb{R}\oplus\mathbb{R}^{k}\to
\mathbb{R}^{k}$ is a continuous function satisfying conditions ($A_{1}%
$)---($A_{4}$) and ($A_{6}$). If $u(t)$ is a ${2\pi m}$-periodic function of
class $C^{2}$ such that $\max_{t\in\mathbb{R}}|u(t)|\ge M$ (where $M$ is given
in ($A_{6}$)), then $u(t)$ cannot be a solution of \eqref{eq:non-aut-lambda}
for $\lambda\in[0,1]$.
\end{lemma}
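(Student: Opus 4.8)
The plan is to argue by contradiction via a second-derivative (maximum principle) test applied to the scalar function $\varphi(t):=|u(t)|^{2}=u(t)\bullet u(t)$. Suppose, contrary to the claim, that for some $\lambda\in[0,1]$ there is a $2\pi m$-periodic solution $u\in C^{2}$ of \eqref{eq:non-aut-lambda} with $\max_{t}|u(t)|\ge M$. Since $\varphi$ is continuous and $2\pi m$-periodic, it attains its global maximum at some $t_{0}\in\mathbb{R}$, and at that point $|u(t_{0})|=\max_{t}|u(t)|\ge M$.

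Next I would differentiate $\varphi$ twice and use the equation. We have $\dot\varphi=2\,u\bullet\dot u$ and $\ddot\varphi=2|\dot u|^{2}+2\,u\bullet\ddot u$; substituting $\ddot u=\lambda f(t,u)+(1-\lambda)u$ yields
\[
\ddot\varphi(t)=2|\dot u(t)|^{2}+2\lambda\, f(t,u(t))\bullet u(t)+2(1-\lambda)|u(t)|^{2}.
\]
The contradiction then comes from evaluating this at $t_{0}$. Because $t_{0}$ is a global maximum of the $C^{2}$ periodic function $\varphi$, the second-order necessary condition gives $\ddot\varphi(t_{0})\le 0$. On the other hand, every term on the right is nonnegative at $t_{0}$: the first is a square; the second is nonnegative since $\lambda\ge 0$ and, by ($A_{6}$) together with $|u(t_{0})|\ge M$, one has $f(t_{0},u(t_{0}))\bullet u(t_{0})>0$; the third is nonnegative since $1-\lambda\ge 0$ and $|u(t_{0})|^{2}\ge M^{2}>0$. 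Moreover at least one of the last two terms is \emph{strictly} positive: if $\lambda=1$ it is the second (by ($A_{6}$)), and if $\lambda<1$ it is the third. Hence $\ddot\varphi(t_{0})>0$, contradicting $\ddot\varphi(t_{0})\le 0$, and the lemma follows.

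The only delicate point—more a matter of bookkeeping than a genuine obstacle—is to handle the endpoints $\lambda=0$ and $\lambda=1$ uniformly, ensuring that for every $\lambda\in[0,1]$ at least one strictly positive term survives so the estimate never degenerates; the case split above settles this. It is worth noting that the symmetry hypotheses ($A_{1}$)--($A_{4}$) are not needed for this particular a priori bound: periodicity together with the Nagumo condition ($A_{6}$) already forces any $C^{2}$ solution of \eqref{eq:non-aut-lambda} to satisfy $\max_{t}|u(t)|<M$.
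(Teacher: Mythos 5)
Your proof is correct and follows essentially the same route as the paper: both arguments apply the second-derivative test to $|u(t)|^{2}$ (the paper uses $\tfrac12|u(t)|^{2}$) at a global maximum $t_{0}$ and derive $\ddot\varphi(t_{0})>0$ from ($A_{6}$), contradicting $\ddot\varphi(t_{0})\le 0$. Your explicit case split on $\lambda=0$ versus $\lambda>0$ is in fact a cleaner bookkeeping of the strict inequality than the paper's own chain of estimates, and your remark that ($A_{1}$)--($A_{4}$) are not actually used is accurate.
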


\begin{proof}
Assume for the contradiction that $u(t)$ is a solution while $\max
_{t\in\mathbb{R}}|u(t)|\ge M$. Consider the function $\phi(t):=\frac{1}{2}
|u(t)|^{2} $. Suppose that $\phi(t_{0})=\max_{t\in\mathbb{R}}\phi(t)$, then
$\phi^{\prime}(t_{0})=u(t_{0})\bullet\dot u(t_{0})=0$ and $\phi^{\prime\prime
}(t_{0})=\dot u(t_{0})\bullet\dot u(t_{0})+ \ddot u(t_{0})\bullet u(t_{0}%
)\leq0$. However, by condition ($A_{6}$), $\phi^{\prime\prime}(t_{0})=\dot
u(t_{0})\bullet\dot u(t_{0})+\ddot u(t_{0})\bullet u(t_{0})= (\lambda
(f(u(t_{0}))-u(t_{0}))+u(t_{0}))\bullet u(t_{0})+ \dot u(t_{0})\bullet\dot
u(t_{0})>(1-\lambda)u(t_{0})\bullet u(t_{0})+ \lambda f(u(t_{0}))\bullet
u(t_{0})>0$, which leads to a contradiction with condition ($A_{6}$).
\end{proof}

\vskip.3cm

\begin{lemma}
\label{lem-Nagumo-lambda} Assume that $f:\mathbb{R}\oplus\mathbb{R}^{k}%
\to\mathbb{R}^{k}$ is a continuous function satisfying conditions ($A_{1}%
$)---($A_{3}$) and ($A_{6}$). Then there exists $R>0$ such that for every
solution $u\in\mathbb{E}$ to $\eqref{eq:non-aut-lambda}$, $\lambda\in[0,1]$,
we have $\|u\|<R$. In addition, for $\Omega_{R}:=B_{R}(0)$, the map
$\mathscr F:\mathbb{E}\to\mathbb{E}$ is $\Omega_{R}$-admissibly $G$-homotopic
to $\id$.
\end{lemma}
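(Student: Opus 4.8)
The statement has two parts: (i) an \emph{a priori} bound $R>0$ valid for all solutions of \eqref{eq:non-aut-lambda} across $\lambda\in[0,1]$, and (ii) the conclusion that $\mathscr F$ is $\Omega_R$-admissibly $G$-homotopic to $\id$. The plan is to obtain the $C^0$-bound first, then bootstrap it to a bound in the $\mathbb{E}$-norm $\|\cdot\|_{2,\infty}$, and finally assemble the homotopy. The key reusable input is Lemma~\ref{lem-Nagumo}, which already tells us that any $2\pi m$-periodic $C^2$ solution $u$ of \eqref{eq:non-aut-lambda} must satisfy $\max_{t}|u(t)|<M$ (its contrapositive rules out $\max_t|u(t)|\ge M$). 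So the sup-norm bound $\|u\|_\infty<M$ is essentially free.

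The main work is to upgrade $\|u\|_\infty<M$ to a bound on $\|\dot u\|_\infty$ and $\|\ddot u\|_\infty$. For $\ddot u$ this is immediate: from \eqref{eq:non-aut-lambda}, $\ddot u(t)=\lambda f(t,u(t))+(1-\lambda)u(t)$, and since $|u(t)|<M$ and $f$ is continuous on the compact set $\mathbb{R}/2\pi\mathbb{Z}\times\overline{B_M(0)}$, we get $\|\ddot u\|_\infty\le C_1:=\sup_{t,\,|x|\le M}|f(t,x)|+M$, uniformly in $\lambda$. For $\dot u$ I would use periodicity: since $u$ is $2\pi m$-periodic and $C^1$, there is a point $t_1$ where $\dot u(t_1)=0$ (e.g.\ any interior extremum of a coordinate, or Rolle), hence $\dot u(t)=\int_{t_1}^{t}\ddot u(s)\,ds$ gives $\|\dot u\|_\infty\le 2\pi m\,C_1$. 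Setting $R:=M+2\pi m\,C_1+C_1+1$ (any constant strictly exceeding $\max\{\|u\|_\infty,\|\dot u\|_\infty,\|\ddot u\|_\infty\}$ for every such solution) yields the desired uniform bound $\|u\|<R$.

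For the homotopy claim, observe that the parametrized system \eqref{eq:non-aut-lambda} is precisely the fixed-point reformulation associated to the convex combination of $N_f$ with the identity map. Concretely, define $\mathfrak{G}:[0,1]\times\mathbb{E}\to\mathbb{E}$ by
\[
\mathfrak{G}(\lambda,u):=u-L^{-1}\Big(\lambda\big(N_f(\mathfrak{j}(u))-\mathfrak{j}(u)\big)\Big),
\]
so that $\mathfrak{G}(1,\cdot)=\mathscr F$ and $\mathfrak{G}(0,\cdot)=\id$. A function $u$ is a zero of $\mathfrak{G}(\lambda,\cdot)$ exactly when it solves \eqref{eq:non-aut-lambda}. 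Since each $N_f$ and $\mathfrak{j}$ respects the symmetries from ($A_1$)---($A_4$) as already established for $\mathscr F$, the map $\mathfrak{G}$ is $G$-equivariant in $u$ for every $\lambda$, and it is completely continuous in $u$ by compactness of $\mathfrak{j}$ exactly as for $\mathscr F$. By the bound from part (i), no zero of $\mathfrak{G}(\lambda,\cdot)$ lies on $\partial\Omega_R$ for any $\lambda\in[0,1]$; hence $\mathfrak{G}$ is an $\Omega_R$-admissible $G$-homotopy joining $\id$ to $\mathscr F$.

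The step I expect to be most delicate is making the $\dot u$ estimate clean: one must be careful to produce a point where $\dot u$ vanishes (using periodicity of a coordinate function of $u$, so the mean of each $\dot u_i$ over a period is zero and a zero exists by the intermediate value theorem) and to keep all constants uniform in $\lambda$, which they are since the $\lambda$-dependence enters only through a convex combination of the bounded quantities $f(t,u)$ and $u$. Everything else is a direct transcription of the completely-continuous-field and equivariance properties already verified for $\mathscr F$.
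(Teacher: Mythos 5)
Your proposal is correct and follows essentially the same route as the paper: the $C^0$ bound from Lemma~\ref{lem-Nagumo}, the bound on $\ddot u$ from the equation via compactness of $[0,1]\times\{(t,x):|x|\le M\}$, the componentwise zero of $\dot u_l$ from periodicity followed by integration, and the convex-combination homotopy corresponding to \eqref{eq:non-aut-lambda}. Your explicit formula for the admissible $G$-homotopy $\mathfrak{G}$ spells out what the paper leaves as ``the conclusion follows,'' and the only cosmetic discrepancy is a missing factor of $\sqrt{k}$ in your $\|\dot u\|_\infty$ estimate, which is harmless since any uniform constant suffices.
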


\begin{proof}
By Lemma $\ref{lem-Nagumo}$, there exists a $M>0$ such that any ${2\pi m}%
$-periodic solution $u(t)$ to \eqref{eq:non-aut-lambda} satisfies $|u(t)|<M$.
Take $A_{R}:=\{(t,x)\in\lbrack0,{2\pi m}]\times\mathbb{R}^{k}:|x|\leq M\}$.
Since the function $F:[0,1]\times\mathbb{R}\oplus\mathbb{R}^{k}\rightarrow
\mathbb{R}^{k}$ given by
\[
F(\lambda,t,x)=\lambda f(t,x)+(1-\lambda)x,\;\;\;\;x\in\mathbb{R}%
^{k},\;\lambda\in\lbrack0,1]
\]
is continuous, and the set $[0,1]\times A_{R}$ is compact, then for every
solution $u(t)$ to \eqref{eq:non-aut-lambda} we have
\[
|\ddot{u}(t)|=|F(\lambda,t,u)|\leq\sup\{|F(\lambda,t,x)|:(t,x)\in
A_{R},\,\lambda\in\lbrack0,1]\}=:M_{2}.
\]
Put $\dot{u}(t)=(u_{1}^{\prime}(t),u_{2}^{\prime}(t),\dots,u_{k}^{\prime
}(t))^{T}$. Then for every $1\leq l\leq k$, since the function $\dot{u}%
_{l}(t)$ is periodic, there exists $\tau_{o}\in\lbrack0,{2\pi m}]$ such that
$u_{l}^{\prime}(\tau_{o})=0$. Thus the identity
\[
u_{l}^{\prime}(t)=\int_{\tau_{o}}^{t}u_{l}^{\prime\prime}(s)ds,\quad
t\in\mathbb{R}%
\]
implies $|u_{l}^{\prime}(t)|\leq{2\pi m}M_{2}$ for $t\in\mathbb{R}$ and
consequently
\[
\Vert\dot{u}\Vert_{\infty}=\max_{t\in\mathbb{R}}\sqrt{|u_{1}^{\prime}%
(t)|^{2}+|u_{2}^{\prime}(t)|^{2}+\dots+|u_{k}^{\prime}(t)|^{2}}\leq\sqrt
{k}{2\pi m}M_{2}=:M_{1}.
\]
Consequently,
\begin{equation}
\Vert\dot{u}\Vert=\max\{\Vert u\Vert_{\infty},\Vert\dot{u}\Vert_{\infty}%
,\Vert\ddot{u}\Vert_{\infty}\}\leq\max\{M,M_{1},M_{2}\}<\max\{M,M_{1}%
,M_{2}\}+1=:R,
\end{equation}
and the conclusion follows.
\end{proof}

\vskip.3cm

\subsection{Abstract Existence Result}

Assume that $f:\mathbb{R}\oplus V\to V$ satisfies the assumptions ($A_{1}%
$)---($A_{6}$). We denote the set of negative eigenvalues of the operator
$\mathscr A $ by $\sigma_{-}(\mathscr A)$. Then by Lemma \ref{lem:DF(0)},
there exists a sufficiently small $\varepsilon>0$ such that $\mathscr F$ is
$\Omega_{\varepsilon}$-admissibly $G$-homotopic to $\mathscr A$ (given by
\eqref{eq:Lin-A}) and therefore%

\begin{equation}
G\text{\textrm{-deg}}(\mathscr F,\Omega_{\varepsilon})=G\text{\textrm{-deg}%
}(\mathscr A,B(\mathbb{E}))=\prod_{\lambda\in\sigma_{-}(\mathscr A)}%
G\text{\textrm{-deg}}(-\id|_{E(\lambda)},B(E(\lambda
))),\label{eq:prod-for-non}%
\end{equation}
where $E(\lambda)$ denotes the  eigenspace of $\mathscr A$
corresponding to $\lambda$, and $B(E(\lambda))$ stands for an open unit ball
in $E(\lambda)$.
\vs

In order to use the formula \eqref{eq:prod-for-non} we need to compute the
negative spectrum $\sigma_{-}(\mathscr A)$. Since $\mathscr A$ is
$\Gamma\times O(2)\times{\mathbb{Z}}_{2}$-equivariant, one can use the
isotypic decomposition \eqref{eq:G-iso-App2} in order to determine eigenvalues
(and eigenspaces) of $\mathscr A$:
\begin{equation}
\sigma(\mathscr A)=\left\{  \lambda_{j,\mu}:=1+\frac{m^{2}(\mu-1)}{j^{2}%
+m^{2}}:j=0,1,2,\dots,\;\mu\in\sigma(A)\right\}  .
\end{equation}
Clearly,
\[
\lambda_{j,\mu}=\frac{j^{2}+m^{2}\mu}{j^{2}+m^{2}}<0
\]
if and only if $\mu<-j^{2}/m^{2}$. Notice that, in such a case we also have
\[
\lambda_{0,\mu}<\lambda_{1,\mu}<\dots<\lambda_{j-1,\mu}<\lambda_{j,\mu}%
<\dots<\lambda_{\mathfrak{j}_{\mu},\mu}<0<\lambda_{\mathfrak{j}_{\mu}+1,\mu},
\]
where $\mathfrak{j}_{\mu}$ is the integer number (by condition ($A_{5}$))
satisfying
\[
-\frac{(\mathfrak{j}_{\mu}+1)^{2}}{m^{2}}<\mu<-\frac{\mathfrak{j}_{\mu}^{2}%
}{m^{2}}.
\]

On the other hand, by Lemma \ref{lem-Nagumo-lambda}, there exists a
sufficiently large $R>0$ such that $\mathscr F$ is $\Omega_{R}$-admissibly
$G$-homotopic to $\id$. Therefore, $G\text{\textrm{-deg}}(\mathscr F,\Omega
_{R})=G\text{\textrm{-deg}}(\id,\Omega_{R})=(G)$. Put $\Omega:=\Omega
_{R}\setminus\overline{\Omega_{\varepsilon}}$. Then the $G\text{\textrm{-deg}%
}(\mathscr F,\Omega)$ is well defined and by additivity property we have
\begin{align*}
G\text{\textrm{-deg}}(\mathscr F,\Omega) &  =G\text{\textrm{-deg}%
}(\mathscr F,\Omega_{R})-G\text{\textrm{-deg}}(\mathscr F,\Omega_{\varepsilon
})\\
&  =(G)-G\text{\textrm{-deg}}(\mathscr A,B(\mathbb{E})).
\end{align*}
In this way we can formulate the following abstract existence result: \vskip.3cm

\begin{theorem}
\label{th:abst-rev-non-aut} Assume that $f:\mathbb{R}\oplus V\to V$ satisfies
the assumptions ($A_{1}$)---($A_{6}$), $R>0$ is a sufficiently large (given by
Lemma \ref{lem-Nagumo-lambda}), $\varepsilon>0$ is sufficiently small (given
by Lemma \ref{lem:DF(0)}) and $\Omega:=\Omega_{R}\setminus\overline
{\Omega_{\varepsilon}}$. If the $G$-equivariant degree
\[
G\text{\textrm{-deg}}(\mathscr F,\Omega)=n_{1}(H_{1})+n_{2}(H_{2})+\dots+n_{s}
(H_{s}) \in A(G)
\]
has a non-zero coefficient $n_{j}$, then there exists a ${2\pi m}$-periodic
solution $u\in\Omega$ to \eqref{eq:sys-non-aut} such that $G_{u}\le H_{j}$. In
addition, if $D_{m}\not \le H_{j}$ then $u$ is non-constant, and if for some
$g\in D_{m}$, $g\not =1$, we have $(g,-1)\in H_{j}$, then the solution $u$ can
not be $2\pi$-periodic solution, i.e. its minimal period is not $2\pi$.
\end{theorem}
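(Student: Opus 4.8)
The plan is to read the solution off from the existence property of the Brouwer $G$-equivariant degree and then translate the orbit type $(H_j)$ into analytic information about $u$. First I would recall that, by the reformulation \eqref{eq:fix-eq}, the zeros of the completely continuous $G$-field $\mathscr F$ in $\mathbb{E}$ are exactly the $2\pi m$-periodic $C^{2}$-solutions of \eqref{eq:sys-non-aut}, and that $\mathscr F$ is $\Omega$-admissible on $\Omega=\Omega_R\setminus\overline{\Omega_\varepsilon}$ (no zeros on $\partial\Omega$, which is what makes $\gdeg(\mathscr F,\Omega)$ well defined). The existence property then guarantees that a nonzero coefficient $n_j$ forces a zero $u\in\Omega$ of $\mathscr F$ of orbit type $(H_j)$; in particular $G_u\le H_j$, and, choosing the representative of the orbit, the distinguished generators of $H_j$ used below lie in $G_u$. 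Since $u\in\Omega$ avoids $\overline{\Omega_\varepsilon}$ we have $u\ne 0$, so $u$ is a nontrivial $2\pi m$-periodic solution of \eqref{eq:sys-non-aut}.

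Next I would identify the isotropy of the obvious symmetric solutions. A constant function $u\equiv c$ is fixed by every time-shift $(e,\gamma^{j},1)$ and by the time-reflection $(e,\kappa,1)$, so its isotropy contains the subgroup $\{e\}\times D_m\times\{1\}$; hence $D_m\le G_u$ whenever $u$ is constant (the case $c=0$ being excluded, as $u\ne 0$). Combining this with $G_u\le H_j$ yields: if $D_m\not\le H_j$, then $u$ cannot be constant, which proves the non-constancy claim.

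For the last assertion I would use the element $(e,g,-1)\in G_u$ supplied by the orbit type, in the case where $g$ is a nontrivial time-shift, say $g=\gamma^{a}$ with $1\le a\le m-1$. The defining relation $(e,\gamma^{a},-1)u=u$ reads $-u(t+2\pi a)=u(t)$, i.e. $u(t+2\pi a)=-u(t)$ for all $t$. If $u$ were $2\pi$-periodic, then shifting by the multiple $2\pi a$ of $2\pi$ acts trivially, $u(t+2\pi a)=u(t)$, and the two identities force $u(t)=-u(t)$, hence $u\equiv 0$, which is impossible since $u\in\Omega$. Therefore $2\pi$ is not a period of $u$, its minimal period exceeds $2\pi$, and $u$ is a genuine subharmonic.

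The step I expect to be delicate is the passage from the algebra of the degree to the analytic conclusion. The non-constancy argument only needs the upper bound $G_u\le H_j$, whereas the subharmonicity argument needs the distinguished element $(e,g,-1)$ to actually lie in $G_u$; reconciling these requires the sharper form of the existence property giving orbit type exactly $(H_j)$ (up to conjugacy), and one must check that the sign/period computation is conjugation-robust, which it is because the central factor $\mathbb{Z}_2$ is preserved and $g\ne 1$ remains nontrivial under conjugation in $D_m$. Moreover, the clean contradiction $u\equiv 0$ is produced specifically by a time-shift carrying the $-1$; for a reflection-type $g$ the same computation only gives $u(t)=-u(-t+\text{const})$, which is compatible with $2\pi$-periodicity, so the argument genuinely hinges on $(g,-1)$ being realized by a nontrivial time-shift (the rotation part of $D_m$)---exactly the feature that distinguishes true subharmonics from $2\pi$-periodic solutions.
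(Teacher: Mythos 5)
Your proposal follows essentially the same route as the paper: existence from the existence axiom of the $G$-equivariant degree, non-constancy from the fact that constants are fixed by the whole time-symmetry factor $D_m$, and subharmonicity from the relation $(g,-1)u=u$ forcing $u(t+2\pi a)=-u(t)$ for a rotation $g=\gamma^a$. Your closing caveat is not merely a worry but a genuine observation: the paper's proof treats $g=\gamma^l$ and $g=\gamma^l\kappa$ uniformly via ``$x(0)=-x(2\pi l)$,'' yet for a reflection the identity is $u(t)=-u(-t-2\pi l)$, which is satisfied by the nonzero $2\pi$-periodic function $\sin(t)v$; so the last assertion of the theorem really does require $(g,-1)$ to be realized by a nontrivial rotation, exactly as you say. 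Likewise, your remark on reconciling $G_u\le H_j$ (used for non-constancy) with $H_j\le G_u$ (needed to place $(g,-1)$ in the isotropy) points at an imprecision the paper leaves implicit, since the existence axiom as stated only yields $(G_u)\ge(H_j)$.
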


\begin{proof}
The existence of a ${2\pi m}$-periodic solution $x$ is a direct consequence of
the existence property for $G$-equivariant degree. Moreover, if $u(t)$ is
constant, then clearly $u(t+l2\pi)=u(t)$ and $u(-t)=u(t)$, for $t\in\mathbb{R}$
and $l\in{\mathbb{Z}}$, so $D_{m}\le H_{j}$. Assume that there exists an
element $(g,-1)\in H_{j}$ for some $1\not =g\in D_{m}$, which implies that for
some $1\le l\le m-1$ we have $g=\gamma^{l}$ or $g=\gamma^{l}\kappa$. Then we
also have
\[
\forall_{t\in\mathbb{R}} \;\; ((g,-1)u)(t)=u(t) \;\; \Rightarrow x(0)=-x(2\pi l).
\]
Since $x\not \equiv 0$ it follows that $u(t)\not =u(t+l2\pi)$ and consequently
$u(t)\not =u(t+2\pi)$.
\end{proof}

\vskip.3cm

\subsection{Subharmonic Solutions in Non-Equivariant Case}

To illustrate our previous theorem, in this section we assume that $\Gamma=\{e\}$, i.e. we consider the case of
\eqref{eq:sys-non-aut} without additional symmetries and $G=D_{m}%
\times{\mathbb{Z}}_{2}$.

\begin{definition}
We define
\begin{equation}
i(j):=%
\begin{cases}
\alpha(j) & \text{ if }\;\alpha(j)\leq\lfloor\frac{m}{2}\rfloor,\\
m-\alpha(j) & \text{ if }\;\alpha(j)>\lfloor\frac{m}{2}\rfloor,
\end{cases}
\label{eq:j-i}%
\end{equation}
where $\alpha(j)\in\{0,1,\dots,m-1\}$ satisfies $\alpha(j)\equiv j\text{ (mod
$m$)}$, i.e.
\[
\alpha(j):=j-\left\lfloor \frac{j}{m}\right\rfloor m\in\{0,1,2,\dots,m-1\}.
\]

\end{definition}

We use notation $\mathfrak{m}(\mu)$ for the algebraic multiplicity of $\mu$
belonging to the spectrum of $A$. The negative spectrum $\sigma_{-}(\mathscr
A)$ can be represented as
\begin{equation}
\sigma_{-}(\mathscr A)=\bigcup_{\mu\in\sigma_{-}(A)}\{\lambda_{0,\mu}%
,\lambda_{1,\mu},\dots,\lambda_{\mathfrak{j}_{\mu}-1,\mu},\lambda
_{\mathfrak{j}_{\mu},\mu}\}\label{eq:negA}%
\end{equation}
Denote by $E(\lambda_{j,\mu})$ the eigenspace of $\lambda_{j,\mu}$. In order
to compute the degree, we introduce the following notation:
\[
\beta_{0}(\mu):=\left(  \left\lfloor \frac{\mathfrak{j}_{\mu}}{m}\right\rfloor
+1\right)  \mathfrak{m}(\mu),\quad\beta_{\mathfrak{s}}(\mu):=\left\lfloor
\frac{\mathfrak{j}_{\mu}}{m}\right\rfloor \mathfrak{m}(\mu),
\]
and for $i=\mathfrak{s}+1$, $\mathfrak{s}+2$ (in the case $m$ is even),
\[
\beta_{i}(\mu):=%
\begin{cases}
\left\lfloor \frac{\mathfrak{j}_{\mu}}{m}\right\rfloor \mathfrak{m}(\mu), &
\text{ if }\;\alpha(\mathfrak{j}_{\mu})<\frac{m}{2},\\
\left(  \left\lfloor \frac{\mathfrak{j}_{\mu}}{m}\right\rfloor +1\right)
\mathfrak{m}(\mu), & \text{ if }\;\alpha(\mathfrak{j}_{\mu})\geq\frac{m}{2},
\end{cases}
\]
and finally for $0<i<\frac{m}{2}$ ,
\[
\beta_{i}(\mu):=%
\begin{cases}
2\left\lfloor \frac{\mathfrak{j}_{\mu}}{m}\right\rfloor \mathfrak{m}(\mu) &
\text{ if }\alpha(\mathfrak{j}_{\mu})<i,\\
\left(  2\left\lfloor \frac{\mathfrak{j}_{\mu}}{m}\right\rfloor +1\right)
\mathfrak{m}(\mu) & \text{ if }\;i\leq\alpha(\mathfrak{j}_{\mu})<m-i,\\
2\left(  \left\lfloor \frac{\mathfrak{j}_{\mu}}{m}\right\rfloor +1\right)
\mathfrak{m}(\mu) & \text{ if }\;m-i\leq\alpha(\mathfrak{j}_{\mu}).
\end{cases}
\]

\vs
\begin{definition}\rm
We define
\[
\eta_{i}:=\sum_{\mu\in\sigma_{-}(A)}\beta_{i}(\mu),
\]
for $i=0,1,\dots,\mathfrak{s},\mathfrak{s}+1,\mathfrak{s}+2$. The number
$\eta_{i}$ counts the "total number of times" that the irreducible
representation $\mathcal{V}_{i}^{-}$ appears in $\sigma_{-}(\mathscr A)$.
\end{definition}

We have the following list of basic degrees for the irreducible $G$%
-representations (see appendix):

\begin{itemize}
\item for $0\leq i\leq\lfloor\frac{m}{2}\rfloor$, $h:=\text{gcd}(m,i)$, $m/h$
is odd then
\[
\deg_{\mathcal{V}_{i}^{-}}=(D_{m}\times{\mathbb{Z}}_{2})-(D_{h})-(D_{h}%
^{z})+({\mathbb{Z}}_{h});
\]

\item if $m/h\equiv2$ (mod $4$) then
\[
\deg_{\mathcal{V}_{i}^{-}}=(D_{m}\times{\mathbb{Z}}_{2})-(D_{2h}^{d}%
)-(D_{2h}^{\hat{d}})+({\mathbb{Z}}_{2h}^{d});
\]

\item if $m/h\equiv0$ (mod $4$) then
\[
\deg_{\mathcal{V}_{i}^{-}}=(D_{m}\times{\mathbb{Z}}_{2})-(D_{2h}%
^{d})-(\widetilde{D}_{2h}^{d})+({\mathbb{Z}}_{2h}^{d});
\]

\item if $i=\mathfrak{s}$ then
\[
\deg_{\mathcal{V}_{\mathfrak{s}}^{-}}=(D_{m}\times{\mathbb{Z}}_{2})-(D_{m}%
^{z});
\]

\item if $i=0$ then
\[
\deg_{\mathcal{V}_{0}^{-}}=(D_{m}\times{\mathbb{Z}}_{2})-(D_{m});
\]

\item if $m$ is even and $i=\mathfrak{s}+1$ then
\[
\deg_{\mathcal{V}_{\mathfrak{s}+1}^{-}}=(D_{m}\times{\mathbb{Z}}_{2}%
)-(D_{m}^{d});
\]

\item if $m$ is even and $i=\mathfrak{s}+2$ then
\[
\deg_{\mathcal{V}_{\mathfrak{s}+2}^{-}}=(D_{m}\times{\mathbb{Z}}_{2}%
)-(D_{m}^{\hat{d}}).%
\]

\end{itemize}
\vs

Notice that $\deg_{\mathcal{V}_{i}^{-}}=\deg_{\mathcal{V}_{i^{\prime}}^{-}}$
if and only if gcd$(i,m)=\text{gcd}(i^{\prime},m)$. Therefore, we introduce
the numbers $\rho_{i}$, $0\leq i\leq\mathfrak{s}+2$, that will allow us to
determine how many times the basic degree $\deg_{\mathcal{V}_{i}^{-}}$ appears
in the degree of $\gdeg(\mathscr A,B(\mathbb{E}))$.

\begin{definition}\rm 
We define $\rho_{0}:=\eta_{0},\;\rho_{\mathfrak{s}}:=\eta_{\mathfrak{s}%
},\;\rho_{\mathfrak{s}+1}:=\eta_{\mathfrak{s}+1},\;\rho_{\mathfrak{s}+2}%
:=\eta_{\mathfrak{s}+2},$ and
\begin{equation}
\rho_{i}:=\sum_{\text{gcd}(i^{\prime},m)=\text{gcd}(i,m)}\eta_{i^{\prime}%
},\qquad0<i<\frac{m}{2}.\label{eq:numb=rho-i}%
\end{equation}

\end{definition}

Before proving our main theorem, we need to analyze the maximal $G$-orbit
types in the space $\mathbb{E}\setminus\{0\}$.

\begin{lemma}
\label{lem:max-orbG} Suppose $m= 2^{\mathfrak{n}} m^{\prime}$, where
$m^{\prime}$ is an odd integer. Then the maximal orbit types in $\mathbb{E}%
\setminus\{0\}$ are:

\begin{itemize}
\item[(a)] $(D_{m}^{z})$, $(D_{m})$, and if $\mathfrak{n}>0$,

\item[(b)] $(D_{m}^{d})$, $(\widetilde{D}_{m}^{d})$, $(D_{\frac{m}{2}}^{d})$,
$(\widetilde{D}_{\frac{m}{2}}^{d})$,\dots, $(D_{\frac{m}{2^{\mathfrak{n}-1}}%
}^{d})$, $(\widetilde{D}_{\frac{m}{2^{\mathfrak{n}-1}}}^{d})$,
\end{itemize}
\end{lemma}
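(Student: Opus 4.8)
The plan is to read the maximal orbit types off the $G$-isotypic decomposition. With $\Gamma=\{e\}$ only the index $l=0$ survives, so $\mathbb{E}=\bigoplus_{i=0}^{\mathfrak s^{\ast}}\mathcal E_i^-$, where $\mathcal E_i^-$ is a countable direct sum of copies of the irreducible $D_m\times\mathbb Z_2$-representation $\mathcal V_i^-$. On each $\mathcal V_i^-$ the factor $\mathbb Z_2$ acts by $-\id$, hence $\mathbb E^G=\{0\}$, and for any $H\le G$ we have $\mathbb E^H\neq\{0\}$ iff $(\mathcal V_i^-)^H\neq\{0\}$ for some $i$. First I would record the elementary fact that an orbit type $(H)$ is maximal in $\mathbb E\setminus\{0\}$ precisely when $H$ is a maximal element of the poset of proper subgroups with $\mathbb E^H\neq\{0\}$: given such a maximal $H$ and $0\neq v\in\mathbb E^H$, the isotropy $G_v\supseteq H$ is again proper (as $\mathbb E^G=\{0\}$) with $\mathbb E^{G_v}\neq\{0\}$, so $G_v=H$ and $H$ is realized as an exact isotropy group. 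This reduces the problem to a finite one: determine the isotropy subgroups occurring in each $\mathcal V_i^-\setminus\{0\}$ and then extract the maximal members of their union.

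Second, I would list the orbit types of the individual irreducibles, read directly from the explicit $D_m\times\mathbb Z_2$-actions (equivalently, from the basic-degree table above). The one-dimensional representations produce the index-two isotropy groups: $\mathcal V_0^-$ gives $(D_m)$, $\mathcal V_{\mathfrak s}^-$ gives $(D_m^z)$, and, when $m$ is even, $\mathcal V_{\mathfrak s+1}^-$ and $\mathcal V_{\mathfrak s+2}^-$ give the two twisted dihedral groups of order $2m$, namely $(D_m^d)$ and $(\widetilde D_m^d)$. For $0<i<m/2$ the representation $\mathcal V_i^-$ is two-dimensional; setting $h:=\gcd(m,i)$, a point on a reflection axis has dihedral isotropy whose twisting is dictated by $m/h\bmod 4$. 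For $m/h$ odd one obtains the untwisted groups $D_h,\,D_h^z$ (with generic orbit type $\mathbb Z_h$); for $m/h$ even the generator of the rotation part acquires the factor $z$, producing the twisted groups $D_{2h}^d$ and its partner (written $D_{2h}^{\hat d}$ if $m/h\equiv2$ and $\widetilde D_{2h}^d$ if $m/h\equiv0$), with generic type $\mathbb Z_{2h}^d$. These are precisely the subgroups appearing in the basic degrees.

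Third is the combinatorial heart of the proof: sorting these subgroups by inclusion through the $2$-adic decomposition $m=2^{\mathfrak n}m'$, $m'$ odd, and $h=2^a h'$ with $h'\mid m'$. Here $m/h$ is odd iff $a=\mathfrak n$, and in that case $D_h\le D_m$, $D_h^z\le D_m^z$ and $\mathbb Z_h\le D_m$, so none of the odd-$m/h$ types is maximal; likewise the cyclic groups $\mathbb Z_{2h}^d$ sit inside $D_{2h}^d$ and are discarded. The decisive point is that in every twisted group the generating rotation $\gamma^{m/(2h)}$ is paired with $z$: this immediately gives $D_{2h}^d\not\le D_m,\,D_m^z$, while comparing $D_{2^{a+1}h'}^d$ with $D_{2^{a+1}m'}^d$ shows, using that $m'/h'$ is odd so $z^{m'/h'}=z$, that the former is contained in the latter; and across different powers of $2$ the groups $D_{m/2^k}^d$ are pairwise incomparable (for $k>k'$ one has $(\gamma^{2^k},z)\in D_{m/2^k}^d$, whereas $\gamma^{2^k}$ is paired with $1$ in $D_{m/2^{k'}}^d$, while the reverse inclusion fails on orders). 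Consequently only the twisted groups with the full odd part $h'=m'$, i.e.\ $h=2^am'$ for $a=0,1,\dots,\mathfrak n-1$, remain maximal, and since $2h=2^{a+1}m'=m/2^{\mathfrak n-1-a}$ these are exactly $D_{m/2^k}^d$ and $\widetilde D_{m/2^k}^d$ for $k=0,1,\dots,\mathfrak n-1$. Together with the ever-present $(D_m^z)$ and $(D_m)$ this is the asserted list, the family (b) being non-empty only when $\mathfrak n>0$.

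I expect this last inclusion analysis to be the main obstacle, since it requires tracking each $\mathbb Z_2$-twisting homomorphism carefully through the $2$-adic decomposition and treating the cases $m/h\equiv2$ and $m/h\equiv0\pmod{4}$ separately; the reductions in the first two paragraphs are routine once the exact-realization observation is in place.
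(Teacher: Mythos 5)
Your proposal is correct and follows essentially the same route as the paper: reduce to the finitely many irreducible $D_m\times\mathbb{Z}_2$-representations occurring in $\mathbb{E}$, list the orbit types in each $\mathcal{V}_i^-\setminus\{0\}$ according to $h=\gcd(m,i)$ and the residue of $m/h$ modulo $4$, and then extract the maximal elements of the resulting poset of conjugacy classes. The only difference is that where the paper cites the subgroup-lattice relations $(D_{2n}^{d})\le(D_m^{d})$ iff $n\mid\frac m2$ and $\frac m{2n}$ is odd (Table 5.3 in \cite{AED}), you verify these containments and incomparabilities directly through the $2$-adic decomposition $m=2^{\mathfrak n}m'$ — a self-contained substitute for the citation that reaches the same list.
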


\begin{proof}
The maximal $G$-orbit types in $\mathbb{E}\setminus\{0\}$ are exactly the same
as the maximal $G$-orbit types which occur in the space $\mathcal{V}%
^{*}\setminus\{0\}$, where
\[
\mathcal{V}^{*}:=\mathcal{V} _{0}^{-}\oplus\mathcal{V} _{1}^{-}\oplus
\mathcal{V} _{2}^{-}\oplus\mathcal{V} ^{-}_{\mathfrak{s}}\oplus\mathcal{V}
^{-}_{\mathfrak{s}+1}\oplus\mathcal{V} ^{-}_{\mathfrak{s}+2}.
\]
First we identify the maximal orbit types in $\mathcal{V} _{i}^{-}%
\setminus\{0\}$, $i=0,1,\dots, \mathfrak s+2$,
\begin{itemize}
\item for $0\leq i\leq\lfloor\frac{m}{2}\rfloor$, $h:=\text{gcd}(m,i)$,
$p_{h}:=\frac{m}{h}$ and if $p_{h}$ is odd, then the orbit types are: $(D_{h})
$, $(D_{h}^{z})$;

\item if $p_{h}\equiv2$ (mod $4$), then the maximal orbit types are:
$((D_{2h}^{d})$, $(D_{2h}^{\hat{d}})$;

\item if $p_{h}\equiv0$ (mod $4$), then the maximal orbit types are:
$(D_{2h}^{d})$, $(\widetilde{D}_{2h}^{d})$;

\item if $i=\mathfrak{s,}$ then the maximal orbit type is: $(D_{m}^{z})$;

\item if $i=0,$ then the maximal orbit type is: $(D_{m})$;

\item if $m$ is even and $i=\mathfrak{s}+1,$ then the maximal orbit type is:
$(D_{m}^{d})$;

\item if $m$ is even and $i=\mathfrak{s}+2,$ then the maximal orbit type is:
$(D_{m}^{\hat{d}})$.
\end{itemize}

Notice that $(D_{m})$ and $(D_{m}^{z})$ are the maximal orbit types which
occur in $\mathcal{V} _{0}^{-}\oplus\mathcal{V} _{\mathfrak{s}}^{-}%
\setminus\{0\}$. On the other hand for $m$ being an even integer, we have (see
Table 5.3 in \cite{AED}):

\begin{itemize}
\item $(D_{2n}^{d}\le(D_{m}^{d})$ if and only if $n|\frac m2$ and $\frac
m{2n}$ is odd;

\item $(\widetilde D_{2n}^{d}\le(\widetilde D_{m}^{d})$ if and only if
$n|\frac m2$ and $\frac m{2n}$ is odd;

\item $(D_{2n}^{\hat{d}}\leq(D_{m}^{\hat{d}})$ if and only if $n|\frac{m}{2} $
and $\frac{m}{4n}$ is odd;
\end{itemize}
\noi and 
the maximality of the orbit types $(D_{m}^{d})$, $(\widetilde{D}_{m}^{d})$,
$(D_{\frac{m}{2}}^{d})$, $(\widetilde{D}_{\frac{m}{2}}^{d})$,\dots,
$(D_{\frac{m}{2^{\mathfrak{n}-1}}}^{d})$, $(\widetilde{D}_{\frac
{m}{2^{\mathfrak{n}-1}}}^{d})$ follows from this result.
\end{proof}
\vs
We have the main theorem.
\vs

\begin{theorem}
\label{th:non-auto} Let $m$ be a natural number and $f:\mathbb{R}%
\times\mathbb{R}^{k}\rightarrow\mathbb{R}^{k}$ be a continuous function
satisfying the assumptions \textrm{($A_{1}$)---($A_{3}$)} and \textrm{($A_{5}
$)---($A_{6}$)}. Suppose $m=2^{\varepsilon_{0}}p_{1}^{\varepsilon_{1}}%
p_{2}^{\varepsilon_{2}}\dots p_{s}^{\varepsilon_{s}}$,
where $\varepsilon_{0}\geq0$, $\varepsilon_{l}>0$, and $p_{l}$, $l=1,2,\dots
,s$, are the prime numbers such that $2<p_{1}<p_{2}<\dots<p_{s}$.
For $l=1,2,\dots s$ put $m_{l}:=\frac{m}{p_{l}}$. Then

\begin{itemize}
\item[(i)] ~~ if $\rho_{0}$ is odd, then the system \eqref{eq:sys-non-aut}
admits a $G$-orbit of ${2\pi m}$-periodic solutions with symmetries $(D_{m})
$, and

\item[(ii)] ~~ if $\rho_{\mathfrak{s}}$ is odd, then the system
\eqref{eq:sys-non-aut} admits a $G$-orbit of ${2\pi m}$-periodic solutions
with symmetries $(D^{z}_{m})$, and

\item[(iii)] ~~ if for some $l=1,2,\dots,s$, $\rho_{m_{l}}$ is odd, then the
system \eqref{eq:sys-non-aut} admits a $G$-orbit of ${2\pi m}$-periodic
solutions with symmetries either $(D_{m_{i}}^{z})$ or $(D_{m}^{z})$, and

\item[(iv)] ~~ if $\varepsilon_{0}>0$, and $\rho_{\mathfrak{s}+1}$ is odd,
then the system \eqref{eq:sys-non-aut} admits a $G$-orbit of ${2\pi m}%
$-periodic solutions with symmetries exactly $(D_{ m}^{d})$, and

\item[(iv)] ~~ if $\varepsilon_{0}>0$, and $\rho_{\mathfrak{s}+2}$ is odd,
then the system \eqref{eq:sys-non-aut} admits a $G$-orbit of ${2\pi m}%
$-periodic solutions with symmetries exactly $(D_{m}^{\hat d})$, and

\item[(iv)] ~~ if $\varepsilon_{0}=\mathfrak{k}>0$, and for some $2\leq
k\leq\mathfrak{k}$, $\rho_{\frac{m}{2^{k}}}$ is odd, then the system
\eqref{eq:sys-non-aut} admits a $G$-orbit of ${2\pi m}$-periodic solutions
with symmetries exactly $(D_{\frac{m}{2^{k-1}}}^{d})$ and $(D_{\frac
{m}{2^{k-1}}}^{\hat{d}})$.
\end{itemize}
\end{theorem}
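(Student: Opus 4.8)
The plan is to read off the coefficient of each maximal orbit type in $\gdeg(\mathscr F,\Omega)=(G)-\gdeg(\mathscr A,B(\mathbb{E}))$ and to show it is controlled, modulo $2$, by the corresponding number $\rho$. First I would split every negative eigenspace $E(\lambda)$ of $\mathscr A$ into its irreducible summands $\mathcal{V}_i^-$ and use multiplicativity of $\gdeg(-\id|_{E(\lambda)},B(E(\lambda)))$ over that splitting; the product in \eqref{eq:prod-for-non} then collapses to $\prod_i(\deg_{\mathcal{V}_i^-})^{\eta_i}$, and since $\deg_{\mathcal{V}_i^-}=\deg_{\mathcal{V}_{i'}^-}$ whenever $\gcd(i,m)=\gcd(i',m)$ the exponents regroup, by \eqref{eq:numb=rho-i}, into
\[
\gdeg(\mathscr A,B(\mathbb{E}))=\prod_i(\deg_{\mathcal{V}_i^-})^{\rho_i}.
\]

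Next, for a maximal orbit type $(H)$ from Lemma~\ref{lem:max-orbG} I would apply the mark homomorphism $m_H\colon A(G)\to\mathbb{Z}$, $m_H(X)=|X^H|$. Because $(H)$ is maximal among the orbit types occurring in $\mathbb{E}\setminus\{0\}$, the only occurring type $\ge(H)$ is $(H)$ itself, while the identity $(G)$ occurs with coefficient $1$ and $m_H((G))=1$; moreover each listed $(H)$ has two-element Weyl group, $|W(H)|=2$, as is immediate from the subgroup lattice of $G$. Each basic degree has the shape $(G)-(\cdots)$ with $(H)$-coefficient $c_i^{(H)}\in\{0,-1\}$, so $m_H(\deg_{\mathcal{V}_i^-})=1+2c_i^{(H)}\in\{1,-1\}$, and multiplicativity of $m_H$ gives
\[
1+2\,n_H=\prod_i\big(1+2c_i^{(H)}\big)^{\rho_i}=(-1)^{R_H},\qquad R_H:=\sum_{i:\,c_i^{(H)}=-1}\rho_i.
\]
Hence the coefficient $n_H$ of $(H)$ in $\gdeg(\mathscr A,B(\mathbb{E}))$ is $-1$ when $R_H$ is odd and $0$ otherwise, so the coefficient of $(H)$ in $\gdeg(\mathscr F,\Omega)$ is nonzero exactly when $R_H$ is odd.

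Reading the basic-degree list, each maximal $(H)$ collects its $-1$ from a single $\gcd$-class, so $R_H$ reduces to one $\rho$: $R_{(D_m)}=\rho_0$ (only $\mathcal{V}_0^-$), $R_{(D_m^z)}=\rho_{\mathfrak{s}}$ (only $\mathcal{V}_{\mathfrak{s}}^-$), $R_{(D_m^d)}=\rho_{\mathfrak{s}+1}$, $R_{(D_m^{\hat d})}=\rho_{\mathfrak{s}+2}$, and $R_{(D_{m/2^{k-1}}^d)}=\rho_{m/2^k}$ from the class $\gcd(i,m)=m/2^k$ (for which $m/\gcd=2^k\equiv0\pmod4$, so the $\equiv0\pmod4$ degree contributes $(D_{m/2^{k-1}}^d)$ and its companion $(\widetilde{D}_{m/2^{k-1}}^d)$). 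This matches the parity hypotheses of (i), (ii), and the three parts labelled (iv). For each such $(H)$ a nonzero coefficient, together with the existence property of Theorem~\ref{th:abst-rev-non-aut}, yields a $2\pi m$-periodic solution whose isotropy contains a conjugate of $H$; maximality of $(H)$ in $\mathbb{E}\setminus\{0\}$ forces the orbit type to be exactly $(H)$ (a nonzero solution cannot be $G$-fixed). For the twisted types $(D_m^z),(D_m^d),(D_m^{\hat d}),(D_{m/2^{k-1}}^d)$ one has $D_m\not\le H$ while $(g,-1)\in H$ for some $g\ne1$, so Theorem~\ref{th:abst-rev-non-aut} additionally gives non-constant solutions of minimal period exceeding $2\pi$; the type $(D_m)$ carries no such restriction.

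The remaining case (iii) is the crux and the main obstacle. Here the target type $(D_{m_l}^z)$, $m_l=m/p_l$, is \emph{not} maximal: it lies strictly below $(D_m^z)$, and because $p_l$ is an odd prime the only occurring orbit types $\ge(D_{m_l}^z)$ are $(D_{m_l}^z)$ and $(D_m^z)$. The one-line mark argument no longer applies, since $m_{D_{m_l}^z}$ now also registers $(D_m^z)$. I would first identify the basic degrees carrying a $-1$ at $(D_{m_l}^z)$: exactly those with $\gcd(i,m)=m_l$, for which $m/\gcd=p_l$ is odd and the term $(D_{\gcd}^z)=(D_{m_l}^z)$ occurs, contributing $\rho_{m_l}$. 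Expanding $\prod_i(\deg_{\mathcal{V}_i^-})^{\rho_i}=\prod_i(1+n_i)^{\rho_i}$ and using that among products of the non-identity parts $n_i$ only the $(D_{m_l}^z)$- and $(D_m^z)$-parts can return an orbit type $\ge(D_{m_l}^z)$, the parity of $\rho_{m_l}$ forces a nonzero coefficient either at $(D_{m_l}^z)$ or, when it is absorbed into the $(D_m^z)$-term, at $(D_m^z)$. In either case the existence property yields a solution whose isotropy contains a conjugate of $D_{m_l}^z$, and by the two-step lattice its orbit type is exactly $(D_{m_l}^z)$ or $(D_m^z)$. Controlling this interference between the two linked orbit types—absent in the maximal cases, where a single mark settles everything—is where the real work lies, and it is the source of the disjunctive conclusion in (iii).
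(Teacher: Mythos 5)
Your proposal is correct and follows essentially the same route as the paper: the identical collapse of \eqref{eq:prod-for-non} to $\prod_i(\deg_{\mathcal{V}_i^-})^{\eta_i}$, the regrouping of exponents into the $\rho_i$ via the $\gcd$-classes, and the appeal to Lemma~\ref{lem:max-orbG} to read off maximal orbit types. The only cosmetic difference is that you extract the coefficients $n_H$ with the mark homomorphism $1+2n_H=(-1)^{R_H}$, where the paper instead invokes $(\deg_{\mathcal{V}_i^-})^2=(G)$; your separate, more careful treatment of the non-maximal type $(D_{m_l}^z)$ in case (iii) is in fact more explicit than the paper's one-line conclusion.
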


\begin{proof}
Clearly for $\mu\in\sigma_{-}(A)$ we have the following formula for the
$\mathcal{V}_{i}^{-}$-isotypic multiplicity of the eigenvalue $\lambda_{j,\mu
}$
\begin{equation}
m_{i}^{-}(\lambda_{j,\mu})=%
\begin{cases}
\mathfrak{m}(\mu) & \text{ if }\;i=i(j),\;0<i<\frac{m}{2},\\
\mathfrak{m}(\mu) & \text{ if }\;i=\mathfrak{s},\;j>0,\;i(j)=0,\\
\mathfrak{m}(\mu) & \text{ if }\;i=\mathfrak{s}+1,\;i(j)=\frac{m}{2},\\
\mathfrak{m}(\mu) & \text{ if }\;i=\mathfrak{s}+2,\;i(j)=\frac{m}{2},\\
0 & \text{ otherwise}.
\end{cases}
\label{eq:lamb-mult}%
\end{equation}
Therefore (by \eqref{eq:lamb-mult}), we have
\begin{equation}
G\text{\textrm{-deg}}(\mathscr A,B(\mathbb{E}))=\prod_{\mu\in\sigma_{-}%
(A)}\prod_{j=0}^{\mathfrak{j}_{\mu}}G\text{\textrm{-deg}}(-\id,B(E(\lambda
_{j,\mu})),\label{eq:prod-for-non2}%
\end{equation}
where
\begin{equation}
G\text{\textrm{-deg}}(-\id,B(E(\lambda_{j,\mu}))=%
\begin{cases}
(\deg_{\mathcal{V}_{0}^{-}})^{\mathfrak{m}(\mu)} & \text{ if }\;j=0,\\
(\deg_{\mathcal{V}_{0}^{-}})^{\mathfrak{m}(\mu)}\circ(\deg_{\mathcal{V}%
_{\mathfrak{s}}^{-}})^{\mathfrak{m}(\mu)} & \text{ if }\;j>0,\;i(j)=0,\\
(\deg_{\mathcal{V}_{i}^{-}})^{\mathfrak{m}(\mu)} & \text{ if }\;0<i=i(j)<\frac
{m}{2},\\
(\deg_{\mathcal{V}_{\mathfrak{s}+1}^{-}})^{\mathfrak{m}(\mu)}\circ
(\deg_{\mathcal{V}_{\mathfrak{s}+2}^{-}})^{\mathfrak{m}(\mu)} & \text{ if
}\;i(j)=\frac{m}{2}.
\end{cases}
\label{eq:eign-deg}%
\end{equation}
Consequently, we obtain the following formula:
\begin{align*}
G\text{\textrm{-deg}}(\mathscr F,\Omega)  & =(G)-G\text{\textrm{-deg}%
}(\mathscr A,B(\mathbb{E}))\\
& =(G)-\prod_{\mu\in\sigma_{-}(A)}\prod_{i=0}^{\mathfrak{s}+2}(\deg
_{\mathcal{V}_{i}^{-}})^{\beta_{i}(\mu)}.
\end{align*}

Since $\eta_{i}$ counts the "total number of times" that the irreducible
representation $\mathcal{V}_{i}^{-}$ appears in $\sigma_{-}(\mathscr A)$, we
obtain that
\begin{equation}
G\text{\textrm{-deg}}(\mathscr F,\Omega)=(G)-\prod_{i=0}^{\mathfrak{s}+2}%
(\deg_{\mathcal{V}_{i}^{-}})^{\eta_{i}}.\label{eq:lin-form-non}%
\end{equation}
Since
\[
\deg_{\mathcal{V}_{i}^{-}}=\deg_{\mathcal{V}_{i^{\prime}}^{-}}\text{\qquad
}\Leftrightarrow\text{\qquad gcd}(i,m)=\text{gcd}(i^{\prime},m),
\]
and the numbers $\rho_{i}$ indicate the number of occurrences of the basic
degree $\deg_{\mathcal{V}_{i}^{-}}$ in the product \eqref{eq:lin-form-non},
the conclusion follows from Lemma \ref{lem:max-orbG} and the fact that the
square of any basic degree is the unit element $(G)\in A(G)$, i.e. $(\deg
_{\mathcal{V}_{i}^{-}})^{2}=(G).$
\end{proof}
\vs
In order to illustrate the applications of Theorem \ref{th:non-auto}, we only
consider a simple case when the operator $A$ has one simple negative
eigenvalue $\mu$ satisfying
\begin{equation}
\label{eq:simple-ex}-\frac{(\mathfrak{p}+1)^{2}}{m^{2}}<\mu<-\frac
{\mathfrak{p}^{2}}{m^{2}}, \quad\mathfrak{p}:=\left\lfloor \frac{m}2
\right\rfloor .
\end{equation}
Then we get the following result: \vskip.3cm

\begin{corollary}
\label{th:non-auto} Let $m$ be a natural number and $f:\mathbb{R}%
\times\mathbb{R}^{k}\rightarrow\mathbb{R}^{k}$ be a continuous function
satisfying the assumptions \textrm{($A_{1}$)---($A_{3}$)}, \textrm{($A_{5}%
$)---($A_{6}$)} and $\sigma_{-}(A)$ consists a single simple eigenvalue $\mu$
satisfying \eqref{eq:simple-ex}. Suppose $m=2^{\varepsilon_{0}}p_{1}%
^{\varepsilon_{1}}p_{2}^{\varepsilon_{2}}\dots p_{s}^{\varepsilon_{s}}$, where
$p_{l}>2$ are distinct prime numbers, $\varepsilon_{0}\geq0$, $\varepsilon
_{l}\geq1$ for $l=1,2,\dots,s$. Then

\begin{itemize}
\item[(i)] ~~the system \eqref{eq:sys-non-aut} admits $G$-orbit of non-zero
${2\pi m}$-periodic solutions with symmetries $(D_{m})$;

\item[(ii)] ~~ if for some $l=1,2,\dots,s$, the number $\frac{p_{l}-1}{2}$ is
odd,  the system \eqref{eq:sys-non-aut} admits $G$-orbit of ${2\pi m}$-periodic
solutions with symmetries $(D_{m_{l}}^{z})$ or $(D_{m}^{z})$ (here
$m_{l}=\frac{m}{p_{l}}$);

\item[(iii)] ~~ if $\varepsilon_{0}>0$, the system \eqref{eq:sys-non-aut}
admits $G$-orbit of ${2\pi m}$-periodic solutions with symmetries exactly
$(D_{m}^{d})$, $(D_{m}^{\hat d})$;

\item[(iv)] ~~ if $\varepsilon_{0}>1$, the system \eqref{eq:sys-non-aut}
admits $G$-orbit of ${2\pi m}$-periodic solutions with symmetries exactly
$(D_{\frac m2}^{d})$, $(\widetilde D_{\frac m2}^{d})$;
\end{itemize}
\end{corollary}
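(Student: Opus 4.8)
The plan is to read the corollary off directly from the main existence theorem (Theorem~\ref{th:non-auto}); all the actual work lies in evaluating the numbers $\eta_i$ and $\rho_i$ in this maximally simple situation. First I would record the two consequences of the hypotheses: since $\mu$ is the only negative eigenvalue of $A$ and is simple, $\sigma_{-}(A)=\{\mu\}$ with $\mathfrak m(\mu)=1$; and comparing \eqref{eq:simple-ex} with the inequality defining $\mathfrak j_\mu$ shows $\mathfrak j_\mu=\mathfrak p=\lfloor m/2\rfloor$. Hence, by \eqref{eq:negA}, the negative spectrum of $\mathscr A$ is exactly $\{\lambda_{0,\mu},\lambda_{1,\mu},\dots,\lambda_{\lfloor m/2\rfloor,\mu}\}$, and since each index $j$ in this range satisfies $0\le j\le\lfloor m/2\rfloor<m$ we have $\alpha(j)=j$ and $i(j)=j$.

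Next I would compute the isotypic counts. Because $\mathfrak m(\mu)=1$ and each $\lambda_{j,\mu}$ with $0\le j\le\lfloor m/2\rfloor$ contributes a single copy of $\mathcal V_{i(j)}^{-}=\mathcal V_j^{-}$, formula \eqref{eq:lamb-mult} gives $\eta_0=1$, $\eta_i=1$ for every $0<i<\tfrac m2$, $\eta_{\mathfrak s}=0$ (no index $j>0$ in range is divisible by $m$, the smallest such being $j=m>\lfloor m/2\rfloor$), and, when $m$ is even, $\eta_{\mathfrak s+1}=\eta_{\mathfrak s+2}=1$ arising from $j=m/2$. Feeding these into the definition of $\rho_i$ yields at once $\rho_0=\eta_0=1$ and, in the even case, $\rho_{\mathfrak s+1}=\rho_{\mathfrak s+2}=1$, while for $0<i<\tfrac m2$ the quantity $\rho_i=\sum_{\gcd(i',m)=\gcd(i,m)}\eta_{i'}$ simply counts the integers $i'$ with $0<i'<\tfrac m2$ and $\gcd(i',m)=\gcd(i,m)$.

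The one genuine computation is this last count, which I would carry out only for the two indices the theorem needs. For $i=m_l=m/p_l$ (legitimate since $p_l\ge3$ forces $m_l<\tfrac m2$) one has $\gcd(m_l,m)=m_l$, and writing $i'=m_l t$ the constraints become $0<t<p_l/2$ with $\gcd(t,p_l)=1$; as $p_l$ is an odd prime every $t\in\{1,\dots,(p_l-1)/2\}$ is admissible, so $\rho_{m_l}=(p_l-1)/2$. For $i=m/2^{k}$ with $2^{k}\mid m$ one has $\gcd(m/2^{k},m)=m/2^{k}$, and writing $i'=(m/2^{k})t$ the conditions reduce to $0<t<2^{k-1}$ with $t$ odd, giving $\rho_{m/2^{k}}=2^{k-2}$; in particular $\rho_{m/4}=1$ is odd, while $\rho_{m/2^{k}}$ is even for every $k\ge3$.

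Finally I would substitute these parities into Theorem~\ref{th:non-auto}. Part~(i) follows from $\rho_0=1$ being odd. Part~(ii) follows from the $\rho_{m_l}$-clause of the theorem, since $\rho_{m_l}=(p_l-1)/2$ is odd exactly when $(p_l-1)/2$ is odd. When $\varepsilon_0>0$ the components $\mathcal V_{\mathfrak s+1}^{-},\mathcal V_{\mathfrak s+2}^{-}$ are present with $\rho_{\mathfrak s+1}=\rho_{\mathfrak s+2}=1$ odd, which yields part~(iii), with symmetries exactly $(D_m^{d})$ and $(D_m^{\hat d})$. When $\varepsilon_0>1$ I may take $k=2$ in the last clause of the theorem: $\rho_{m/4}=1$ is odd, producing orbits of symmetry $(D_{m/2}^{d})$ and $(\widetilde D_{m/2}^{d})$, and since $\rho_{m/2^{k}}$ is even for all $k\ge3$ no further such orbits are forced, giving part~(iv). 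The main obstacle is purely the number-theoretic bookkeeping of the residue counts $\rho_i$, in particular verifying $\eta_{\mathfrak s}=0$ and the parity of $\rho_{m/2^{k}}$; once these are settled, the conclusions are mechanical consequences of the main theorem and the abstract existence result Theorem~\ref{th:abst-rev-non-aut}.
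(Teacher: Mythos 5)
Your proposal is correct and follows essentially the same route as the paper: compute $\mathfrak{j}_\mu=\lfloor m/2\rfloor$ from \eqref{eq:simple-ex}, deduce $\eta_0=1$, $\eta_i=1$ for $0<i<m/2$, $\eta_{\mathfrak{s}}=0$, $\eta_{\mathfrak{s}+1}=\eta_{\mathfrak{s}+2}=1$, then evaluate $\rho_{m_l}=\tfrac{p_l-1}{2}$ and $\rho_{m/4}=1$ and invoke Theorem~\ref{th:non-auto}. You actually supply a bit more justification than the paper does (the explicit substitution $i'=m_l t$ behind the count $\rho_{m_l}=\tfrac{p_l-1}{2}$, and the parity of $\rho_{m/2^k}$ for $k\ge 3$), but the argument is the same.
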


\begin{proof}
Notice that we have
\[
\beta_{0}(\mu)=1,\;\;\beta_{\mathfrak{s}}(\mu)=0,\;\;\beta_{i}(\mu)=1\;\text{
for }\;0<i<\frac{m}{2},
\]
and in the case $m$ is even
\[
\beta_{\mathfrak{s}}(\mu+1)=\beta_{\mathfrak{s}}(\mu+2)=1.
\]
Consequently,

\begin{itemize}
\item Since $\rho_{0}=\beta_{0}=1$, thus there exist a non-zero $2\pi
m$-periodic solution to the system \eqref{eq:sys-non-aut} with symmetries
exactly $(D_{m})$;

\item Since $\rho_{\mathfrak{s}}=\beta_{\mathfrak{s}}=0$, we cannot conclude
the existence of a $2\pi m$-periodic solution to the system
\eqref{eq:sys-non-aut} with symmetries exactly $(D_{m}^{z})$;

\item Since $\rho_{m_{l}}=\beta_{m_{l}}\cdot\frac{p_{l}-1}{2}=\frac{p_{l}%
-1}{2}$, $l=1,2,\dots,s$, if $\frac{p_{l}-1}{2}$ is odd there exists a $2\pi
m$-periodic solution to the system \eqref{eq:sys-non-aut} with symmetries
either $(D_{m_{l}}^{z})$ or $(D_{m}^{z})$ (notice that $(D_{m_{l}}^{z})$ is
not a maximal orbit type);

\item In the case $m$ is even, i.e. $\varepsilon_{0}>0$, since $\rho
_{\mathfrak{s}+1}=\rho_{\mathfrak{s}+2}=1$, it follows that the system
\eqref{eq:sys-non-aut} has an orbit of $2\pi m$-periodic solutions with
symmetries either $(D_{m}^{d})$ or $(D_{m}^{\hat{d}})$;

\item In addition, if $\varepsilon_{0}>1$, notice that $\rho_{\frac{m}{4}%
}=\beta_{\frac{m}{4}}=1$, thus the system \eqref{eq:sys-non-aut} admits
$G$-orbit of ${2\pi m}$-periodic solutions with symmetries $(D_{\frac{m}{2}%
}^{d})$, $(\widetilde{D}_{\frac{m}{2}}^{d})$;
\end{itemize}
\end{proof}

\vskip.3cm In order to illustrate that the other maximal orbit types in 
 in $\mathbb{E}\setminus\{0\}$ can also appear as symmetries of $2\pi m$-periodic subharmonic solutions to  \eqref{eq:sys-non-aut},
 we assume that $m=2^{n}p_{1}^{\varepsilon_{1}%
}p_{2}^{\varepsilon_{2}}\dots p_{s}^{\varepsilon_{s}}$,  $n>1$, and the operator $A$
has one simple negative eigenvalue $\mu$ satisfying
\begin{equation}
-\frac{(\mathfrak{p}+1)^{2}}{m^{2}}<\mu<-\frac{\mathfrak{p}^{2}}{m^{2}}%
,\quad\mathfrak{p}:=\left\lfloor \frac{m}{2^{n}}\right\rfloor
.\label{eq:simple-ex2}%
\end{equation}
Then we have: \vskip.3cm

\begin{corollary}
Let $m$ be a natural number and $f:\mathbb{R}\times\mathbb{R}^{k}%
\rightarrow\mathbb{R}^{k}$ be a continuous function satisfying the assumptions
\textrm{($A_{1}$)---($A_{3}$)}, \textrm{($A_{5}$)---($A_{6}$)} and $\sigma
_{-}(A)$ consists a single simple eigenvalue $\mu$ satisfying
\eqref{eq:simple-ex2}. Suppose $m=2^{n}p_{1}^{\varepsilon_{1}}p_{2}%
^{\varepsilon_{2}}\dots p_{s}^{\varepsilon_{s}}$, where $n>2$, $p_{l}>2$ are
distinct prime numbers, $\varepsilon_{0}\geq0$, $\varepsilon_{l}\geq1$ for
$l=1,2,\dots,s$. Then

\begin{itemize}
\item[(i)] ~~The system \eqref{eq:sys-non-aut} admits $G$-orbit of non-zero
${2\pi m}$-periodic solutions with symmetries $(D_{m}),$

\item[(ii)] ~~ The system \eqref{eq:sys-non-aut} admits $G$-orbit of ${2\pi
m}$-periodic solutions with symmetries exactly $(D_{\frac{m}{2^{n-1}}}^{d})$,
$(\widetilde{D}_{\frac{m}{2^{n-1}}}^{d})$;
\end{itemize}
\end{corollary}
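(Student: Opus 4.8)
The plan is to specialize the bookkeeping carried out in the previous corollary to the present eigenvalue configuration and then read off the odd coefficients from the degree formula \eqref{eq:lin-form-non}. Since $\sigma_{-}(A)=\{\mu\}$ is a single simple eigenvalue, $\mathfrak m(\mu)=1$ and $\eta_i=\beta_i(\mu)$ for every $i$. Condition \eqref{eq:simple-ex2} forces $\mathfrak j_\mu=\mathfrak p=\lfloor m/2^{n}\rfloor$, and because $m=2^{n}p_1^{\varepsilon_1}\cdots p_s^{\varepsilon_s}$ the quotient $m/2^{n}=p_1^{\varepsilon_1}\cdots p_s^{\varepsilon_s}$ is an odd integer, so in fact $\mathfrak j_\mu=m/2^{n}$ exactly. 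Consequently $\lfloor\mathfrak j_\mu/m\rfloor=0$ and $\alpha(\mathfrak j_\mu)=m/2^{n}$, and substituting these into the case distinctions defining $\beta_i(\mu)$ is then a direct computation.

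First I would record the resulting multiplicities. One gets $\beta_0(\mu)=1$ and $\beta_{\mathfrak s}(\mu)=0$; for $0<i<\tfrac m2$ the third branch $m-i\le\alpha(\mathfrak j_\mu)$ never occurs (as $m/2^{n}<m/2<m-i$ when $n>1$), so the middle branch applies precisely when $i\le m/2^{n}$, giving $\beta_i(\mu)=1$ for $1\le i\le m/2^{n}$ and $\beta_i(\mu)=0$ otherwise; and since $\alpha(\mathfrak j_\mu)=m/2^{n}<m/2$ also $\beta_{\mathfrak s+1}(\mu)=\beta_{\mathfrak s+2}(\mu)=\lfloor\mathfrak j_\mu/m\rfloor=0$. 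In particular $\rho_0=\eta_0=1$ is odd, so part (i) is immediate from Theorem~\ref{th:non-auto}(i).

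For part (ii) the core computation is the parity of $\rho_{m/2^{n}}$. Put $h:=m/2^{n}=\gcd(m/2^{n},m)$. The indices $i'$ with $0<i'<\tfrac m2$ and $\gcd(i',m)=h$ are exactly $i'=c\,h$ with $c$ odd and $0<c<2^{n-1}$, and for such an index $\eta_{i'}=\beta_{i'}(\mu)=1$ iff $i'\le m/2^{n}=h$, i.e.\ iff $c=1$; every other admissible $c$ contributes $0$. Hence by \eqref{eq:numb=rho-i}
\[
\rho_{m/2^{n}}=\sum_{\gcd(i',m)=h}\eta_{i'}=1,
\]
which is odd. Since $n>2$ we have $m/h=2^{n}\equiv0\pmod4$, so the basic degree $\deg_{\mathcal V_{m/2^{n}}^{-}}$ is the one from the $m/h\equiv0\pmod4$ branch of the basic-degree list, whose maximal orbit types in $\mathbb E\setminus\{0\}$ are $(D_{2h}^{d})=(D_{m/2^{n-1}}^{d})$ and $(\widetilde D_{2h}^{d})=(\widetilde D_{m/2^{n-1}}^{d})$ (Lemma~\ref{lem:max-orbG}(b)). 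As the square of any basic degree equals the unit $(G)$, the odd exponent $\rho_{m/2^{n}}=1$ forces these two maximal orbit types to survive with nonzero coefficient in $\gdeg(\mathscr F,\Omega)$, and Theorem~\ref{th:non-auto} then yields the asserted $G$-orbit of $2\pi m$-periodic solutions with symmetries exactly $(D_{m/2^{n-1}}^{d})$ and $(\widetilde D_{m/2^{n-1}}^{d})$.

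The step I expect to be the main obstacle is the parity bookkeeping of the third paragraph: one must verify that among all indices $i'$ sharing the $\gcd$-value $h$ only the single index $i'=h$ carries a nonzero $\mathcal V_{i'}^{-}$-isotypic multiplicity, so that the sum defining $\rho_{m/2^{n}}$ collapses to $1$ rather than to an even number, and one must attach this $\gcd$-stratum to the correct clause of the basic-degree list---namely the $m/h\equiv0\pmod4$ case, which produces the tilde orbit type $\widetilde D_{m/2^{n-1}}^{d}$ rather than the hat type $D_{m/2^{n-1}}^{\hat d}$. Everything else is a routine specialization of the computation already performed in the preceding corollary.
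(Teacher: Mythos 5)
Your proposal is correct, and it follows exactly the route the paper intends: the paper actually states this corollary \emph{without} proof, and your argument is the natural specialization of Theorem~\ref{th:non-auto} that parallels the proof of the preceding corollary (compute $\mathfrak j_\mu=\mathfrak p=m/2^n$, read off the $\beta_i(\mu)$, and observe that $\rho_0=1$ and $\rho_{m/2^n}=1$ are odd). Your parity bookkeeping is right --- among the indices $i'=ch$ with $c$ odd and $0<c<2^{n-1}$ only $c=1$ satisfies $i'\le\mathfrak j_\mu$, so the sum in \eqref{eq:numb=rho-i} collapses to $1$ --- and your identification of the relevant clause of the basic-degree list ($m/h=2^n\equiv 0\pmod 4$, hence the orbit types $(D_{m/2^{n-1}}^{d})$ and $(\widetilde D_{m/2^{n-1}}^{d})$) matches the corollary's statement and, in effect, corrects the $\hat d$/$\widetilde{\phantom{D}}$ slip in the third item labelled (iv) of Theorem~\ref{th:non-auto}.
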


\vskip.3cm

\subsection{Examples of Symmetric Systems}

\label{ex:existence} 

In this section we assume that $k=3$, $f:\mathbb{R}\times \mathbb{R}%
^{3}\rightarrow \mathbb{R}^{3}$ satisfies assumptions ($A_{1}$)---($A_{5}$)
with $\Gamma =D_{3}$ and $\ $ 
\begin{equation}
A=\frac{1}{4}\left[ 
\begin{array}{ccc}
-4 & -2 & -2 \\ 
-2 & -4 & -2 \\ 
-2 & -2 & -4%
\end{array}%
\right] .  \label{eq:A-mat}
\end{equation}%
Then we have 
\[
\sigma (A)=\left\{\mu _{0}=-2,\mu _{1}=-\frac{1}{2}\right\}.
\]%
\vskip.3cm

\paragraph{\textbf{Case $m=3$: }}

In this case $G=D_{3}\times D_{3}\times {\mathbb{Z}}_{2}$ and we have 
\[
\sigma (\mathscr A)=\left\{ \lambda _{j,l}:=1+\frac{9(\mu _{l}-1)}{j^{2}+9}%
:l=0,1,\;j=0,1,2,\dots \right\} 
\]%
Then 
\begin{align*}
\sigma _{-}(\mathscr A)=& \Big\{\lambda _{0,0}=-2,\;\lambda _{1,0}=-\frac{17%
}{10},\;\lambda _{2,0}=-\frac{14}{13},\;\lambda _{3,0}=-\frac{1}{2}%
,\;\lambda _{4,0}=-\frac{2}{25}, \\
& \lambda _{0,1}=-\frac{1}{2},\;\lambda _{1,1}=-\frac{7}{20},\;\lambda
_{2,1}=\allowbreak -\frac{1}{26}\Big\},
\end{align*}
and each of the eigenvalues $\lambda_{l,i}$ are isotypicaly simple, i.e. the
eigenspaces $E(\lambda_{l,i})$ are irreducible $G$-representations. More
precisely, we have
\begin{gather*}
E(\lambda_{0,0})\simeq\mathcal{V}_{0,0}^{-},\quad E(\lambda_{3,0}%
)\simeq\mathcal{V}_{0,0}^{-}\oplus\mathcal{V}_{2,0}^{-},\quad E(\lambda
_{1,0})\simeq E(\lambda_{2,0})\simeq E(\lambda_{4,0})\simeq\mathcal{V}%
_{1,0}^{-},\\
E(\lambda_{0,1})\simeq\mathcal{V}_{1,0}^{-},\quad E(\lambda_{1,1})\simeq
E(\lambda_{2,1})\simeq\mathcal{V}_{1,1}^{-}.
\end{gather*}
Therefore, we obtain
\begin{align*}
G\text{\textrm{-deg}}(\mathscr F,\Omega) &  =(G)-G\text{\textrm{-deg}%
}(\mathscr A,B_{1}(0))\\
&  =(G)-(\deg_{\mathcal{V}_{0,0}^{-}})^{2}\circ\deg_{\mathcal{V}_{2,0}^{-}%
}\circ(\deg_{\mathcal{V}_{1,0}^{-}})^{3}\circ\deg_{\mathcal{V}_{0,1}^{-}}%
\circ(\deg_{\mathcal{V}_{1,1}^{-}})^{2}\\
&  =(G)-\deg_{\mathcal{V}_{0,1}^{-}}\circ\deg_{\mathcal{V}_{1,0}^{-}}\circ
\deg_{\mathcal{V}_{2,0}^{-}}%
\end{align*}

\noindent{\small \textbf{GAP Code:} We use GAP package \texttt{EquiDeg} to
compute $G\text{\textrm{-deg}}(\mathscr F,\Omega)$ for the groups
$G:=D_{3}\times D_{3}\times{\mathbb{Z}}_{2}$. The GAP code f is given below. }

{\small \begin{lstlisting}[language=GAP, frame=single]
LoadPackage( "EquiDeg" );
gr1 := pDihedralGroup( 3 );
gr2 := SymmetricGroup( 2 );
# create the product of D_3 and Z_2
gr3:= DirectProduct( gr1, gr2 );
# create group G
G := DirectProduct( gr1, gr3 );
# create and name CCSs of gr1 and gr3
ccs_g1:= ConjugacyClassesSubgroups( gr1 );
ccs_g1_names := [ "Z1", "D1", "Z3", "D3" ];
ccs_gr3:=ConjugacyClassesSubgroups( gr3 );
ccs_gr3_names:=["Z1","Z1p","D1","D1z", "Z3",
"D1p","Z3p","D3","D3z","D3p"];
SetCCSsAbbrv(gr1, ccs_g1_names);
SetCCSsAbbrv(gr3, ccs_gr3_names);
ccs := ConjugacyClassesSubgroups( G );
cc := ConjugacyClasses( G );
# create characters of irreducible G-representations
irr := Irr( G );
# compute the corresponding to irr[k[] basic degree degk
\end{lstlisting}
For a subgroup $K\le D_{n}$, we use the symbol $K^{p}:=K\times{\mathbb{Z}}%
_{2}$, but in the code we simply write \texttt{Kp}. } By using the list of
conjugacy classes \texttt{cc}, one can easily recognize the irreducible
$G$-representations $\mathcal{V} _{i,l}^{\pm}:=\mathcal{V} _{i}^{\pm}%
\otimes\mathcal{U} _{l}$. For example, the character

\begin{center}\scriptsize
\begin{tabular}
[c]{|c|ccccccccc|}\hline
& $(\pm\bm1)$ & $(\pm\mathbf{\kappa}_{2})$ & $(\pm\bm\gamma_{2})$ &
$(\pm\mathbf{\kappa}_{1})$ & $(\pm\bm\kappa_{1} \mathbf{\kappa}_{2})$ &
$(\pm\mathbf{\kappa}_{1}\mathbf{\gamma}_{2})$ & $(\pm\mathbf{\gamma}_{1})$ &
$(\pm\mathbf{\gamma}_{1}\mathbf{\kappa}_{2})$ & $(\pm\mathbf{\gamma}%
_{1}\mathbf{\gamma}_{2})$\\\hline
\texttt{Irr(G)[9]} & $\pm2$ & $\mp2$ & $\pm2$ & $0$ & $0$ & $0$ & $\mp1$ &
$\pm1$ & $\mp1$\\\hline
\end{tabular}

\end{center}

\noi is the character of the $G$-irreducible representation $\mathcal{V}^{-}_{2,1}%
$. To be more precise, we have the following correspondence of the characters.

\begin{center}%
\begin{tabular}
[c]{|c|c|c|}\hline
~~~\texttt{Irr(G)[1]}$=\chi_{\mathcal{V} ^{+}_{0,0}}$~~~ &
~~~\texttt{Irr(G)[7]}$=\chi_{\mathcal{V} ^{+}_{2,0}}$~~~ &
~~~\texttt{Irr(G)[13]}$=\chi_{\mathcal{V} ^{-}_{1,2}}$~~~\\
~~~\texttt{Irr(G)[2]}$=\chi_{\mathcal{V} ^{-}_{2,2}}$~~~ &
~~~\texttt{Irr(G)[8]}$=\chi_{\mathcal{V} ^{+}_{0,2}}$~~~ &
~~~\texttt{Irr(G)[14]}$=\chi_{\mathcal{V} ^{-}_{1,0}}$~~~\\
~~~\texttt{Irr(G)[3]}$=\chi_{\mathcal{V} ^{-}_{2,0}}$~~~ &
~~~\texttt{Irr(G)[9]}$=\chi_{\mathcal{V} ^{-}_{2,1}}$~~~ &
~~~\texttt{Irr(G)[15]}$=\chi_{\mathcal{V} ^{+}_{1,2}}$~~~\\
~~~\texttt{Irr(G)[4]}$=\chi_{\mathcal{V} ^{-}_{0,2}}$~~~ &
~~~\texttt{Irr(G)[10]}$=\chi_{\mathcal{V} ^{-}_{0,1}}$~~~ &
~~~\texttt{Irr(G)[16]}$=\chi_{\mathcal{V} ^{+}_{1,0}}$~~~\\
~~~\texttt{Irr(G)[5]}$=\chi_{\mathcal{V} ^{-}_{0,0}}$~~~ &
~~~\texttt{Irr(G)[11]}$=\chi_{\mathcal{V} ^{+}_{2,1}}$~~~ &
~~~\texttt{Irr(G)[17]}$=\chi_{\mathcal{V} ^{+}_{1,1}}$~~~\\
~~~\texttt{Irr(G)[6]}$=\chi_{\mathcal{V} ^{+}_{2,2}}$~~~ &
~~~\texttt{Irr(G)[12]}$=\chi_{\mathcal{V} ^{+}_{0,1}}$~~~ &
~~~\texttt{Irr(G)[18]}$=\chi_{\mathcal{V} ^{-}_{1,1}}$~~~\\\hline
\end{tabular}

\end{center}

To conclude the computations in GAP (we continue to use the package
\texttt{EquiDeg}). \noindent
{\small \begin{lstlisting}[language=GAP, frame=single]
# unit element in A(G)
u := -BasicDegree( Irr( G )[1] );
# basic degrees
deg01 := BasicDegree( Irr( G )[10] );
deg10 := BasicDegree( Irr( G )[14] );
deg20 := BasicDegree( Irr( G )[3] );
deg := u-deg01*deg10*deg20;
\end{lstlisting}}

The list of conjugacy classes of $G=D_{3}\times D_{3}\times{\mathbb{Z}}_{2}$
generated by GAP is $\{(H_{k}):1\le k\le69\}$, where $(G)=(H_{69})$. As a
result we obtain
\begin{align}
G\text{\textrm{-deg}}(\mathscr F,\Omega) & =-(H_{1})+(H_{4})+(H_{6}%
)+(H_{8})+(H_{11})-2(H_{18})-(H_{22})-(H_{27})\nonumber\\
& +(H_{35})+(H_{43})+(H_{44})+(H_{45})-(H_{51})+(H_{63})- (H_{67}%
),\label{eq:D3-max-OT}
\end{align}
where the coefficient of $G\text{\textrm{-deg}}(\mathscr F,\Omega)$ can be
easily described using amalgamated notation, for example \noindent
{\small \begin{lstlisting}
gap> Print( AmalgamationSymbol( ccs[45] ) );
\end{lstlisting}} In this way we get the following description of some orbit
types represented in $G\text{\textrm{-deg}}(\mathscr F,\Omega)$:
\begin{alignat*}{3}
H_{4} & =-(D_{1}\times_{{\mathbb{Z}}_{2}}D_{1}^{z}),\quad H_{6} &
=(D_{1}\times{\mathbb{Z}}_{1}),\\
H_{8} & =({\mathbb{Z}}_{1}\times D_{1}),\quad H_{18} & =(D_{1}\times D_{1}),\\
H_{22} & =({\mathbb{Z}}_{1}\times D_{3}), \quad H_{21} & =(D_{1}%
\times{\mathbb{Z}}_{3}),\\
H_{43} & =(D_{3}\times D_{1}), \quad H_{44} & =(D_{1}\times D_{3}),\\
H_{45} & =(D_{1}\times_{{\mathbb{Z}}_{2}}^{D_{3}}D_{3}^{p}), \quad H_{51} &
=(D_{3}\times D_{1}^{z}),\\
H_{63} & =(D_{3}\times D_{3}^{z}), \quad H_{67} & =(D_{3}\times D_{3}).
\end{alignat*}

Next we find the maximal orbit types in the representation $\mathbb{E}$:

\noindent{\small \begin{lstlisting}[language=GAP, frame=single]
# characters appearing in E
chi :=  Irr(G)[3]+  Irr(G)[5]+ Irr(G)[9]+ Irr(G)[10]
+ Irr(G)[14]+ Irr(G)[18];
# find orbit types in E
orbtyps := ShallowCopy( OrbitTypes( chi ) );
Remove( orbtyps );
# find maximal orbit types in H-0
max_orbtyps := MaximalElements( orbtyps );
Print( List( max_orbtyps, IdCCS ) );
\end{lstlisting}}

The maximal orbit types in $\mathbb{E}\setminus\{0\}$ are
\begin{gather*}
(H_{45})=(D_{1}\times_{{\mathbb{Z}}_{2}}^{D_{3}}D_{3}^{p}),\; \;\;(H_{52}%
)=(D_{1}\times_{{\mathbb{Z}}_{2}}^{D_{3}^{z}}D_{3}^{p}) ,\\
(H_{63})=(D_{3}\times D_{3}^{z}),\;\;\; (H_{67})=(D_{3}\times D_{3}).
\end{gather*}
Consequently, we obtain the following \vskip.3cm

\begin{theorem}
Let $m=3$ and $\Gamma=D_{3}=\langle(1,2,3),(2,3)\rangle$ (acting on
$\mathbb{R}^{3}$ by permuting the coordinates). Assume that $f:\mathbb{R}%
\times\mathbb{R}^{3}\rightarrow\mathbb{R}^{3}$ and $A$ (given by
\eqref{eq:A-mat}) satisfy the conditions ($A_{1}$)--($A_{6}$). Then there exist

\begin{itemize}
\item[(i)] at least one $(D_{1}\times_{{\mathbb{Z}}_{2}}^{D_{3}}D_{3}^{p}%
)$-orbit of (i.e. at least 6 different) $6\pi$-periodic solutions to the
system \eqref{eq:sys-non-aut},

\item[(ii)] at least one $(D_{3}\times D_{3}^{z})$-orbit of (i.e. at least 2
different) $6\pi$-periodic solutions to the system \eqref{eq:sys-non-aut},

\item[(iii)] at least one $(D_{3}\times D_{3})$-orbit of (i.e. at least 2
different) $6\pi$-periodic solutions to the system \eqref{eq:sys-non-aut},
\end{itemize}

Therefore, the system \eqref{eq:sys-non-aut} admits at least 10 different
$6\pi$-periodic solutions.
\end{theorem}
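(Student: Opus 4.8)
The plan is to read the three claimed families of solutions off the already-computed $G$-equivariant degree \eqref{eq:D3-max-OT}, filtered through the list of maximal orbit types in $\mathbb{E}\setminus\{0\}$ and the abstract existence result Theorem~\ref{th:abst-rev-non-aut}. The first step is to cross-reference the four maximal orbit types $(H_{45})=(D_1\times_{{\mathbb{Z}}_2}^{D_3}D_3^p)$, $(H_{52})=(D_1\times_{{\mathbb{Z}}_2}^{D_3^z}D_3^p)$, $(H_{63})=(D_3\times D_3^z)$ and $(H_{67})=(D_3\times D_3)$ against the coefficients in \eqref{eq:D3-max-OT}. Precisely three of them carry a non-zero coefficient—$(H_{45})$ and $(H_{63})$ with coefficient $+1$, and $(H_{67})$ with coefficient $-1$—while $(H_{52})$ is absent and is discarded. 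These three are exactly the symmetry types asserted in (i)--(iii).

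The central step is to upgrade ``a solution exists'' to ``a solution of \emph{exactly} the prescribed symmetry exists.'' For a maximal orbit type this is automatic: a non-zero coefficient at $(H_j)$ is witnessed, through the existence property underlying Theorem~\ref{th:abst-rev-non-aut}, by a zero $u\in\Omega$ of $\mathscr F$ lying in the fixed-point subspace $\mathbb{E}^{H_j}=\{v\in\mathbb{E}:hv=v\ \text{for all}\ h\in H_j\}$, so that $H_j\le G_u$; since $(H_j)$ is maximal in $\mathbb{E}\setminus\{0\}$, no orbit type lies strictly above it, whence $(G_u)=(H_j)$ and therefore $G_u=H_j$. Each such $u$ is non-trivial because $\Omega=\Omega_R\setminus\overline{\Omega_\varepsilon}$ omits a neighborhood of $0$. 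This produces one $(H_{45})$-orbit, one $(H_{63})$-orbit and one $(H_{67})$-orbit of solutions.

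The count is then a direct orbit--stabilizer computation. With $|G|=|D_3\times D_3\times{\mathbb{Z}}_2|=6\cdot6\cdot2=72$, I would record the isotropy orders: $H_{45}$ is the fibre product of $D_1\cong{\mathbb{Z}}_2$ and $D_3^p=D_3\times{\mathbb{Z}}_2$ over their common quotient ${\mathbb{Z}}_2$, so $|H_{45}|=2\cdot12/2=12$, while $|H_{63}|=|D_3\times D_3^z|=36$ and $|H_{67}|=|D_3\times D_3|=36$. Hence the three orbits $G/G_u$ consist of $72/12=6$, $72/36=2$ and $72/36=2$ distinct functions; the elements of a single orbit are pairwise distinct because $g\cdot u=u$ forces $g\in G_u$, and the three orbits are mutually disjoint since they carry distinct orbit types. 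Summing gives $6+2+2=10$ distinct $6\pi$-periodic solutions, which is precisely (i)--(iii) together with the stated total. For $(H_{63})$ one moreover notes that $(\gamma^l\kappa,-1)\in D_3^z$ with $\gamma^l\kappa\in D_m\setminus\{1\}$, so the last part of Theorem~\ref{th:abst-rev-non-aut} gives these solutions minimal period $6\pi$, confirming their genuinely subharmonic nature.

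The main obstacle is the exactness step: rigorously guaranteeing that a non-zero degree coefficient at a maximal orbit type is realized by a solution of precisely that symmetry, rather than by one of smaller or larger isotropy. The justification rests on the fixed-point-space reduction together with the observation that for a maximal $(H_j)$ the set $\mathbb{E}^{H_j}\setminus\{0\}$ contains no point of strictly larger isotropy, so a zero there automatically has orbit type exactly $(H_j)$. A secondary, purely bookkeeping, point is extracting $|H_{45}|=12$ correctly from the amalgamated \texttt{EquiDeg} symbol; once this order is pinned down the orbit sizes $6,2,2$—and hence the total of ten—are immediate.
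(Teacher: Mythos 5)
Your proposal is correct and follows essentially the same route as the paper, which derives the theorem directly from the computed degree \eqref{eq:D3-max-OT}, the list of maximal orbit types in $\mathbb{E}\setminus\{0\}$, and Theorem~\ref{th:abst-rev-non-aut}: the nonzero coefficients at the maximal types $(H_{45})$, $(H_{63})$, $(H_{67})$ yield orbits of solutions with exactly those symmetries, and the orbit--stabilizer count $72/12+72/36+72/36=6+2+2=10$ gives the total. Your explicit justification of the ``exact symmetry'' upgrade via maximality, and of the orbit lengths, is precisely the bookkeeping the paper leaves implicit.
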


\vskip.3cm

\paragraph{\textbf{Case $m=4$:}}

In this case $G=D_{4}\times D_{3}\times {\mathbb{Z}}_{2}$, we have 
\[
\sigma (\mathscr A)=\left\{ \lambda _{j,l}:=1+\frac{16(\mu _{l}-1)}{j^{2}+16}%
:l=0,1,\;j=0,1,2,\dots \right\} 
\]%
Then 
\begin{align*}
\sigma _{-}(\mathscr A)=& \Big\{\lambda _{0,0}=-2,\;\lambda _{1,0}=-\frac{31%
}{17},\;\lambda _{2,0}=-\frac{7}{5},\;\lambda _{3,0}=-\frac{23}{25}%
,\;\lambda _{4,0}=-\frac{1}{2}, \\
& \lambda _{5,0}=-\frac{7}{41},\;\lambda _{0,1}=-\frac{1}{2},\;\lambda
_{1,1}=-\frac{7}{17},\;\lambda _{2,1}=-\frac{1}{5}\Big\},
\end{align*}
each of the eigenvalues $\lambda_{l,i}$ are isotypicly simple, i.e. the
eigenspaces $E(\lambda_{l,i})$ are irreducible $G$-representations. More
precisely, we have
\begin{gather*}
E(\lambda_{0,0})\simeq\mathcal{V}_{0,0}^{-},\quad E(\lambda_{4,0}%
)\simeq\mathcal{V}_{0,0}^{-}\oplus\mathcal{V}_{2,0}^{-},\quad E(\lambda
_{1,0})\simeq E(\lambda_{3,0})\simeq E(\lambda_{5,0})\simeq\mathcal{V}%
_{0,1}^{-},\\
E(\lambda_{2,0})\simeq\mathcal{V}_{3,0}^{-}\oplus\mathcal{V}_{4,0}%
^{-},\;E(\lambda_{0,1})\simeq\mathcal{V}_{0,1}^{-},\quad E(\lambda
_{1,1})\simeq\mathcal{V}_{1,1}^{-},\;E(\lambda_{2,1})\simeq\mathcal{V}%
_{3,1}^{-}\oplus\mathcal{V}_{4,1}^{-}%
\end{gather*}
Therefore, we obtain
\begin{align*}
G\text{\textrm{-deg}}(\mathscr F,\Omega) &  =(G)-G\text{\textrm{-deg}%
}(\mathscr A,B_{1}(0))\\
&  =(G)-(\deg_{\mathcal{V}_{0,0}^{-}})^{2}\circ(\deg_{\mathcal{V}_{2,0}^{-}%
})\circ(\deg_{\mathcal{V}_{0,1}^{-}})^{3}\circ(\deg_{\mathcal{V}_{3,0}^{-}%
})\circ(\deg_{\mathcal{V}_{4,0}^{-}})\\
&  \hskip.7cm\circ\deg_{\mathcal{V}_{0,2}^{-}}\circ\deg_{\mathcal{V}_{1,0}%
^{-}}\circ\deg_{\mathcal{V}_{1,1}^{-}}\circ\deg_{\mathcal{V}_{3,1}^{-}}%
\circ(\deg_{\mathcal{V}_{4,1}^{-}})\\
&  =(G)-(\deg_{\mathcal{V}_{2,0}^{-}})\circ(\deg_{\mathcal{V}_{0,1}^{-}}%
)\circ(\deg_{\mathcal{V}_{3,0}^{-}})\circ(\deg_{\mathcal{V}_{4,0}^{-}}%
)\circ\deg_{\mathcal{V}_{0,2}^{-}}\\
&  \hskip.7cm\circ\deg_{\mathcal{V}_{1,0}^{-}}\circ\deg_{\mathcal{V}_{1,1}%
^{-}}\circ\deg_{\mathcal{V}_{3,1}^{-}}\circ(\deg_{\mathcal{V}_{4,1}^{-}})
\end{align*}
\noindent{\small \textbf{GAP Code:} $G:=D_{4}\times D_{3}\times{\mathbb{Z}%
}_{2}$. }

{\small \begin{lstlisting}[language=GAP, frame=single]
LoadPackage( "EquiDeg" );
gr1 := pDihedralGroup( 3 );
gr2 := SymmetricGroup( 2 );
# create the product of D_3 and Z_2
gr3:= DirectProduct( gr1, gr2 );
# create group G
gr4 := pDihedralGroup( 4 );
G := DirectProduct( gr4, gr3 );
# create and name CCSs of gr4 and gr3
ccs_g1:= ConjugacyClassesSubgroups( gr1 );
ccs_g4_names := [ "Z1", "Z2", "D1",  "tD1", "D2",
"Z4", "tD2", "D4" ];
ccs_gr3:=ConjugacyClassesSubgroups( gr3 );
ccs_gr3_names:=["Z1","Z1p","D1","D1z", "Z3",
"D1p","Z3p","D3","D3z","D3p"];
SetCCSsAbbrv(gr4, ccs_g4_names);
SetCCSsAbbrv(gr3, ccs_gr3_names);
ccs := ConjugacyClassesSubgroups( G );
cc := ConjugacyClasses( G );
# create characters of irreducible G-representations
irr := Irr( G );
# compute the corresponding to irr[k[] basic degree degk
\end{lstlisting}
For a subgroup $K\le D_{n}$, we use the symbol $K^{p}:=K\times{\mathbb{Z}}%
_{2}$ and in the code we simply write \texttt{Kp}. } The group $G=D_{4}\times
D_{3}\times{\mathbb{Z}}_{2}$ has 236 conjugacy classes of subgroups. The
conjugacy classes are denoted $(H_{k})$, $k=1,2,\dots,236$, and are according
to the same order as it is generated by GAP. The group G has 30 irreducible
representations which can be easily identified in GAP.

\begin{center}%
\begin{tabular}
[c]{|c|c|c|}\hline
~~~\texttt{Irr(G)[2]}$=\chi_{\mathcal{V} ^{-}_{2,2}}$~~~ &
~~~\texttt{Irr(G)[7]}$=\chi_{\mathcal{V} ^{-}_{3,0}}$~~~ &
~~~\texttt{Irr(G)[19]}$=\chi_{\mathcal{V} ^{-}_{4,1}}$~~~\\
~~~\texttt{Irr(G)[3]}$=\chi_{\mathcal{V} ^{-}_{3,2}}$~~~ &
~~~\texttt{Irr(G)[8]}$=\chi_{\mathcal{V} ^{-}_{4,0}}$~~~ &
~~~\texttt{Irr(G)[20]}$=\chi_{\mathcal{V} ^{-}_{0,1}}$~~~\\
~~~\texttt{Irr(G)[4]}$=\chi_{\mathcal{V} ^{-}_{4,2}}$~~~ &
~~~\texttt{Irr(G)[9]}$=\chi_{\mathcal{V} ^{-}_{0,0}}$~~~ &
~~~\texttt{Irr(G)[25]}$=\chi_{\mathcal{V} ^{-}_{1,2}}$~~~\\
~~~\texttt{Irr(G)[5]}$=\chi_{\mathcal{V} ^{-}_{0,2}}$~~~ &
~~~\texttt{Irr(G)[17]}$=\chi_{\mathcal{V} ^{-}_{2,1}}$~~~ &
~~~\texttt{Irr(G)[26]}$=\chi_{\mathcal{V} ^{-}_{1,0}}$~~~\\
~~~\texttt{Irr(G)[6]}$=\chi_{\mathcal{V} ^{-}_{2,0}}$~~~ &
~~~\texttt{Irr(G)[18]}$=\chi_{\mathcal{V} ^{-}_{3,1}}$~~~ &
~~~\texttt{Irr(G)[30]}$=\chi_{\mathcal{V} ^{-}_{1,1}}$~~~\\\hline
\end{tabular}

\end{center}

Therefore, we are set up to compute the degree $G\text{\textrm{-deg}%
}(\mathscr F,\Omega)$: \noindent
{\small \begin{lstlisting}[language=GAP, frame=single]
# unit element in A(G)
u := -BasicDegree( Irr( G )[1] );
# basic degrees
deg20 := BasicDegree( Irr( G )[6] );
deg01 := BasicDegree( Irr( G )[20] );
deg30 := BasicDegree( Irr( G )[7] );
deg40 := BasicDegree( Irr( G )[8] );
deg02 := BasicDegree( Irr( G )[5] );
deg10 := BasicDegree( Irr( G )[26] );
deg11 := BasicDegree( Irr( G )[30] );
deg31 := BasicDegree( Irr( G )[18] );
deg41 := BasicDegree( Irr( G )[19] );
deg := u-deg20*deg01*deg30* deg40* deg02* deg10
* deg11* deg31* deg41;
\end{lstlisting}}

We also use the list of all irreducible $G$-representations generated by GAP.
Using this list, the corresponding basic $G$-degrees are easily computed by
the GAP program, so the exact value of $G\text{\textrm{-deg}}%
(\mathscr F,\Omega)$ is given by {\small
\begin{align*}
G\text{\textrm{-deg}}(\mathscr F,\Omega) & =-3(H_{1})+(H_{2})+(H_{3})
+(H_{4})-(H_{6})+2(H_{8})+(H_{9})\\
&  +(H_{10}) +2(H_{11})-(H_{12}) + (H_{13})+(H_{14}) -(H_{17}) +(H_{18})
-(H_{19})\\
& -(H_{20}) -(H_{23}) -(H_{25})-(H_{26}) +(H_{27}) -(H_{28}) -(H_{32})
+(H_{33})\\
&  -(H_{34})+(H_{35}) -(H_{36})-(H_{39}) +(H_{40}) -(H_{43})- (H_{55}) +
2(H_{57})\\
&  -(H_{60}) +(H_{62}) +(H_{68}) +(H_{71}) +(H_{72}) -(H_{73}) -(H_{76})
+(H_{86})\\
& +(H_{87}) -(H_{88}) -(H_{90}) -(H_{92}) -(H_{95}) +(H_{99}) +(H_{100})
+(H_{102})\\
& +(H_{103})-(H_{104}) +(H_{105}) +(H_{109}) --(H_{116}) +(H_{117}) +(H_{119})
-(H_{120})\\
& -(H_{122}) +(H_{130}) -(H_{135})- (H_{138}) -(H_{144}) +(H_{148}) +(H_{151})
+(H_{167})\\
& +(H_{169}) +(H_{170})-m (H_{173}) +(H_{174}) + (H_{175})+(H_{177}%
)+(H_{179})-(H_{192})\\
& -(H_{203}) -(H_{208}) - (H_{212}) -(H_{214}) +(H_{223}) +(H_{225})
+(H_{229}) +(H_{233}).
\end{align*}
}

\noindent{\small \begin{lstlisting}[language=GAP, frame=single]
# characters appearing in E
chi :=  Irr(G)[2]+Irr(G)[3]+ Irr(G)[4]+ Irr(G)[5]
+ Irr(G)[6]+ Irr(G)[7]+Irr(G)[8]+Irr(G)[9]
+ Irr(G)[17]+ Irr(G)[18]+ Irr(G)[19]+ Irr(G)[20]
+ Irr(G)[25]+ Irr(G)[26]+ Irr(G)[30];
# find orbit types in E
orbtyps := ShallowCopy( OrbitTypes( chi ) );
Remove( orbtyps );
# find maximal orbit types in H-0
max_orbtyps := MaximalElements( orbtyps );
Print( List( max_orbtyps, IdCCS ) );
\end{lstlisting}}

Since the $G$-isotypic components in $\mathbb{E}$ are easily identified, the
GAP program also allows a quick computation of all maximal orbit types in
$\mathbb{E}\setminus\{0\}$, namely
\begin{gather*}
(H_{177}),\;\;(H_{178}),\;\;(H_{179}),\;\;(H_{180}),\;\;(H_{223}%
),\;\;(H_{224}),\;\;(H_{225}),\\
(H_{228}),\;\;(H_{229}),\;\;(H_{232}),\;\;(H_{233}).
\end{gather*}
One can notice that $G\text{\textrm{-deg}}(\mathscr F,\Omega)$ has non-zero
coefficients for the following maximal orbit types:
\begin{alignat*}{3}
(H_{177}) &  =(D_{2}^{D_{1}}\times_{{\mathbb{Z}}_{2}}^{D_{3}}D_{3}^{p}%
),\quad(H_{179}) &  =(\widetilde{D}_{2}^{\widetilde{D}_{1}}\times
_{{\mathbb{Z}}_{2}}^{D_{3}}D_{3}^{p})\\
(H_{223}) &  =(D_{4}\times D_{3}^{z}),\quad(H_{225}) &  =(D_{4}^{D_{2}}%
\times_{{\mathbb{Z}}_{2}}^{D_{3}}D_{3}^{p}),\quad\\
(H_{229}) &  =(D_{4}^{D_{2}}\times_{{\mathbb{Z}}_{2}}^{D_{3}}D_{3}%
^{p}),(H_{233}) &  =(D_{4}^{{\mathbb{Z}}_{4}}\times_{{\mathbb{Z}}_{2}}^{D_{3}%
}D_{3}^{p})
\end{alignat*}

Consequently, we obtain the following result. \vskip.3cm

\begin{theorem}
Let $m=4$, $k=3$ and $\Gamma=D_{3}=\langle(1,2,3),(2,3)\rangle$ (acting on
$\mathbb{R}^{3}$ by permuting the coordinates). Assume that $f:\mathbb{R}%
\times\mathbb{R}^{3}\rightarrow\mathbb{R}^{3}$ and $A$ (given by
\eqref{eq:A-mat}) satisfy the conditions ($A_{1}$)---($A_{6}$). Then there exist

\begin{itemize}
\item[(i)] at least one $(D_{2}^{ D_{1}} \times^{D_{3}}_{{\mathbb{Z}}_{2}%
}D_{3}^{p})$-orbit of (i.e. at least 4 different) $8\pi$-periodic solutions to
the system \eqref{eq:sys-non-aut},

\item[(ii)] at least one $(\widetilde D_{2}^{ \widetilde D_{1}} \times^{D_{3}%
}_{{\mathbb{Z}}_{2}}D_{3}^{p})$-orbit of (i.e. at least 4 different) $8\pi
$-periodic solutions to the system \eqref{eq:sys-non-aut},

\item[(iii)] at least one $(D_{4}\times D_{3}^{z})$-orbit of (i.e. at least 2
different) $8\pi$-periodic solutions to the system \eqref{eq:sys-non-aut},

\item[(iv)] at least one $(D_{4}^{D_{2}} \times^{D_{3}}_{{\mathbb{Z}}_{2}%
}D_{3}^{p})$-orbit of (i.e. at least 2 different) $8\pi$-periodic solutions to
the system \eqref{eq:sys-non-aut},

\item[(v)] at least one $( D_{4}^{ D_{2}} \times^{D_{3}}_{{\mathbb{Z}}_{2}%
}D_{3}^{p}) $-orbit of (i.e. at least 2 different) $8\pi$-periodic solutions
to the system \eqref{eq:sys-non-aut},

\item[(vi)] at least one $(D_{4}^{{\mathbb{Z}}_{4}}\times^{D_{3}}%
_{{\mathbb{Z}}_{2}}D_{3}^{p})$-orbit of (i.e. at least 2 different) $8\pi
$-periodic solutions to the system \eqref{eq:sys-non-aut}.
\end{itemize}

Therefore, the system \eqref{eq:sys-non-aut} admits at least 16 different
$8\pi$-periodic solutions.
\end{theorem}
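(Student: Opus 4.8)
The plan is to read off the conclusion directly from the value of $G\text{\textrm{-deg}}(\mathscr F,\Omega)$ computed above, via the existence property of the Brouwer $G$-equivariant degree for $G=D_{4}\times D_{3}\times{\mathbb{Z}}_{2}$. First I would note that the standing hypotheses \textrm{($A_{1}$)--($A_{6}$)} on $f$ and on $A$ (given by \eqref{eq:A-mat}) guarantee, through Lemma \ref{lem:DF(0)} and Lemma \ref{lem-Nagumo-lambda}, that the completely continuous field $\mathscr F$ is $\Omega_{\varepsilon}$-admissibly $G$-homotopic to $\mathscr A$ and $\Omega_{R}$-admissibly $G$-homotopic to $\id$. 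Hence the additivity computation yielding $G\text{\textrm{-deg}}(\mathscr F,\Omega)=(G)-G\text{\textrm{-deg}}(\mathscr A,B(\mathbb{E}))$ is legitimate and the explicit expansion in $A(G)$ displayed above is valid.

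Next I would combine two pieces of data already assembled: the list of maximal orbit types in $\mathbb{E}\setminus\{0\}$, namely $(H_{177}),(H_{178}),\dots,(H_{233})$, and the coefficients of $G\text{\textrm{-deg}}(\mathscr F,\Omega)$. Intersecting the two, the maximal orbit types carrying a \emph{nonzero} coefficient are precisely $(H_{177})$, $(H_{179})$, $(H_{223})$, $(H_{225})$, $(H_{229})$, $(H_{233})$, with the amalgamated descriptions recorded above. For each such $(H_{j})$ the existence property (Theorem \ref{th:abst-rev-non-aut}) produces a nontrivial $8\pi$-periodic solution $u\in\Omega$ of \eqref{eq:sys-non-aut}. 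The point requiring care is the upgrade to \emph{exact} symmetry: for a maximal orbit type the $(H_{j})$-coefficient counts, via reduction to the fixed-point subspace $\mathbb{E}^{H_{j}}$ on which the Weyl group $N(H_{j})/H_{j}$ acts freely off the lower strata, exactly the orbits of solutions with isotropy $(H_{j})$; since moreover $\mathbb{E}^{G}=\{0\}$ (the ${\mathbb{Z}}_{2}$-action forces $u=-u$), no nontrivial solution on $\Omega=\Omega_{R}\setminus\overline{\Omega_{\varepsilon}}$ can have strictly larger symmetry, so a nonzero coefficient delivers a solution with $G_{u}=H_{j}$.

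Finally I would convert each solution into a count of distinct functions through its $G$-orbit. A solution with isotropy exactly $H_{j}$ lies on a $G$-orbit of $|G|/|H_{j}|$ distinct $8\pi$-periodic solutions, and $|G|=|D_{4}\times D_{3}\times{\mathbb{Z}}_{2}|=96$. Reading off the orders, $(H_{177})$ and $(H_{179})$ have index $4$ while $(H_{223})$, $(H_{225})$, $(H_{229})$, $(H_{233})$ have index $2$, producing the six orbits of sizes $4,4,2,2,2,2$ listed in \textrm{(i)--(vi)}. As these orbit types are pairwise distinct the orbits are disjoint, and summing the sizes gives at least $4+4+2+2+2+2=16$ distinct $8\pi$-periodic solutions.

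The genuine difficulty of the whole argument is the evaluation of $G\text{\textrm{-deg}}(\mathscr A,B(\mathbb{E}))$ as a product of basic degrees $\deg_{\mathcal{V}_{i,l}^{-}}$ in the Burnside ring $A(G)$ and its reduction to the displayed element, together with the independent determination of the maximal orbit types in $\mathbb{E}\setminus\{0\}$; both are carried out above with the \texttt{EquiDeg} package. Once these are in hand, the remaining steps (matching maximal types to nonzero coefficients, the maximality-to-exactness upgrade, and the orbit-size bookkeeping) are routine, so within the proof proper the only conceptual subtlety is the passage from a nonzero coefficient at a maximal orbit type to a solution of exactly that symmetry.
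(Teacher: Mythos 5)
Your proposal is correct and follows essentially the same route as the paper, which likewise reads the conclusion off the GAP-computed value of $G\text{\rm -deg}(\mathscr F,\Omega)$ by intersecting the maximal orbit types in $\mathbb{E}\setminus\{0\}$ with the nonzero coefficients and then counting orbit lengths $|G|/|H_j|$ with $|G|=96$. The only (harmless) over-claim is that the coefficient at a maximal orbit type ``counts exactly'' the orbits of solutions with that isotropy --- the degree only guarantees existence via $(G_x)\geq(H_j)$, which together with maximality already forces $(G_x)=(H_j)$, and that is all the argument needs.
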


\vskip.3cm

\section{Bifurcation in Reversible Non-Autonomous Second Order Differential
Equaitons}

\label{sec:app2:3} Consider the following parametrized system with $2\pi
$-periodic coefficients
\begin{equation}
\ddot{u}(t)=\big(-\alpha\id+A\big)u(t)+f(t,u(t)),\;\;u(t)\in\mathbb{R}%
^{k},\label{eq:sys-non-aut-bif}%
\end{equation}
where $A$ is a non-singular $k\times k$-matrix and $f:\mathbb{R}%
\times\mathbb{R}^{k}\rightarrow\mathbb{R}^{k}$ is a continuous function
satisfying the conditions ($A_{1}$)---($A_{3}$) and :

\begin{itemize}
\item[($B_{4}$)] ~~ $\displaystyle \lim_{x\to0} \frac{f(t,x)}{|x|}=0$
uniformly with respect to $t$.
\end{itemize}

We are interested in studying the bifurcation of the \textit{subharmonic}
${2\pi m}$-periodic solutions (for some integer $m$) to
\eqref{eq:sys-non-aut-bif} from the trivial solution $(\alpha,0)$, i.e. the
solutions
\begin{equation}
u(t)=u(t+{2\pi m}),\;\;\dot{u}(t)=\dot{u}(t+{2\pi m}).\label{eq:pm-per-bif}%
\end{equation}
We also consider a subgroup $\Gamma\leq S_{k}$ which acts on $V:=\mathbb{R}%
^{k}$ by permuting the coordinates of vectors $x=(x_{1},x_{2},\dots,x_{k}%
)^{T}$ in $\mathbb{R}^{k}$ given by
\begin{equation}
\sigma x=\sigma(x_{1},x_{2},\dots,x_{k})^{T}:=(x_{\sigma(1)},x_{\sigma
(2)},\dots,x_{\sigma(k)})^{T}.\label{eq:G-act-V}%
\end{equation}
Clearly, the space $V:=\mathbb{R}^{k}$ equipped with this $\Gamma$-action is
an orthogonal $\Gamma$-representation. We also introduce the following conditions

\begin{itemize}
\item[($B_{5}$)] ~~ For all $t\in\mathbb{R}$, $x\in V$ and $\sigma\in\Gamma$,
we have $f(t,\sigma x)=\sigma f(t,x)$ and $A\sigma x=\sigma Ax$;
\end{itemize}

The condition ($B_{5}$) implies that the system \eqref{eq:sys-non-aut-bif} is
$\Gamma$-symmetric. The bifurcation problem \eqref{eq:sys-non-aut-bif} with
the boundary conditions \eqref{eq:pm-per-bif} can be expressed as the
following equation
\begin{equation}
\mathscr F(\alpha,u)=0,\quad(\alpha,u)\in\mathbb{R}\oplus\mathbb{E}%
,\label{eq:bif-eq}%
\end{equation}
where
\begin{equation}
\mathscr F(\alpha,u):=u-L^{-1}\Big(N_{A+f}\big(\mathfrak{j}(u)\big)-(\alpha
+1)\mathfrak{j}(u)\Big),\quad\alpha\in\mathbb{R},\;u\in\mathbb{E}%
.\label{eq:F-bif}%
\end{equation}
Put $G:=\Gamma\times D_{m}\times{\mathbb{Z}}_{2}$. Notice that under the
assumptions ($A_{1}$)--($A_{3}$) and ($B_{5}$), the map $\mathscr F$ is
$G$-equivariant completely continuous field such that $\mathscr F(\alpha,0)=0$
for all $\alpha\in\mathbb{R}$. Moreover, the assumption ($B_{4}$) implies that
$\mathscr F$ is differentiable at $(\alpha,0)$ with respect to $u\in
\mathbb{E}$, and
\[
\mathscr A(\alpha):=D_{u}\mathscr F(\alpha,0)=\id-L^{-1}\Big(N_{A}%
\circ\mathfrak{j}-(\alpha+1)\mathfrak{j}\Big):\mathbb{E}\rightarrow\mathbb{E}.
\]
The necessary condition for the point $(\alpha_{o},0)$ to be a bifurcation
point for \eqref{eq:bif-eq} is that $\mathscr A(\alpha_{o}):\mathbb{E}%
\rightarrow\mathbb{E}$ is {\bf not an isomorphism}, i.e. $0\in\sigma
(\mathscr A(\alpha_{o})$.
\vs

The point $\alpha_{o}$ is called a \textit{critical point} for
\eqref{eq:bif-eq} and the set of all such critical points $\alpha_{o}$ is
denoted $\Lambda$. One can easily compute the spectrum of the operator
$\mathscr A(\alpha_{o})$:
\[
\sigma(\mathscr A(\alpha_{o})):=\left\{  1+\frac{m^{2}(\mu-\alpha_{o}%
-1)}{j^{2}+m^{2}}:j=0,1,2,\dots,\;\mu\in\sigma(A)\right\}  ,
\]
which implies that
\[
\Lambda=\left\{  \alpha_{j,\mu}:=\frac{j^{2}+m^{2}\mu}{m^{2}}:j=0,1,2,\dots
,\;\mu\in\sigma(A)\right\}  .
\]

\vs
\subsection{Bifurcation in System \eqref{eq:sys-non-aut-bif} without
Symmetries}

Assume that $\Gamma=\{e\}$, i.e. $G:=D_{m}\times{\mathbb{Z}}_{2}$, and that
$\alpha_{j,\mu}\not =\alpha_{j^{\prime},\mu^{\prime}}$ for $(j,\mu
)\not =(j^{\prime},\mu^{\prime})$. Let us put all the elements of $\Lambda$ in
increasing order, i.e. $\dots<\alpha_{j_{k},\mu_{k}}<\alpha_{j_{k+1},\mu
_{k+1}}<\dots$. Then for every $\alpha_{j_{o},\mu_{o}}\in\Lambda$ we have
\begin{equation}
\label{eq:G-inv}\omega_{G}(\alpha_{j_{o},\mu_{o}})=\prod_{\alpha_{j_{k}%
,\mu_{k}}<\alpha_{j_{o},\mu_{o}}} (\deg_{\mathcal{W} _{i(j_{k})}^{-}}%
)^{m(\mu_{k})}\Big((G)-(\deg_{\mathcal{W} _{i(j_{o})}^{-}})^{m(\mu_{o})}\Big).
\end{equation}
\vskip.3cm

\begin{theorem}
\label{th:bif-non-eq} Suppose $A:V\rightarrow V$ and $f:\mathbb{R}\times
V\rightarrow V$ satisfies the assumptions ($A_{1}$)---($A_{3}$) and ($B_{4}$)
and let $\Lambda$ be the critical set for \eqref{eq:bif-eq}. Assume that for
all $\alpha_{j,\mu}$, $\alpha_{j^{\prime},\mu^{\prime}}\in\Lambda$ we have
$\alpha_{j,\mu}\not =\alpha_{j^{\prime},\mu^{\prime}}$ if $(j,\mu
)\not =(j^{\prime},\mu^{\prime})$. Then for every $\alpha_{j_{o},\mu_{o}}%
\in\Lambda$ such that $m(\mu_{o})$ is odd we have $\omega(\alpha_{j_{o}%
,\mu_{o}})\not =0$, i.e. the point $(\alpha_{j_{o},\mu_{o}},0)$ is a
bifurcation point of non-trivial ${2\pi m}$-periodic solutions for \eqref{eq:sys-non-aut-bif}.
\end{theorem}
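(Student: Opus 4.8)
The plan is to show directly that the bifurcation invariant $\omega_G(\alpha_{j_o,\mu_o})$, as supplied by the product formula \eqref{eq:G-inv} (which encodes the difference of the $G$-equivariant degrees of the linearization $\mathscr A(\alpha)$ just below and just above the critical value), is a \emph{nonzero} element of the Burnside ring $A(G)$. The conclusion then follows from the existence property of the equivariant degree: a nonvanishing local bifurcation invariant forces a branch of nontrivial solutions to bifurcate from $(\alpha_{j_o,\mu_o},0)$. Writing
\[
P:=\prod_{\alpha_{j_k,\mu_k}<\alpha_{j_o,\mu_o}}(\deg_{\mathcal{W}_{i(j_k)}^-})^{m(\mu_k)},
\]
formula \eqref{eq:G-inv} reads $\omega_G(\alpha_{j_o,\mu_o})=P\cdot\big((G)-(\deg_{\mathcal{W}_{i(j_o)}^-})^{m(\mu_o)}\big)$, so I would reduce the problem to analyzing the two factors separately: the invertibility of $P$ and the nonvanishing of the ``jump'' term.

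First I would dispose of the factor $P$. Each basic degree $\deg_{\mathcal{W}_{i}^-}$ is an involution in $A(G)$, since $(\deg_{\mathcal{W}_i^-})^2=(G)$; hence $P$, being a product of such involutions, is a unit of $A(G)$. Because multiplication by a unit is an additive automorphism of $A(G)$, it sends nonzero elements to nonzero elements, and therefore
\[
\omega_G(\alpha_{j_o,\mu_o})\neq 0\quad\Longleftrightarrow\quad (G)-(\deg_{\mathcal{W}_{i(j_o)}^-})^{m(\mu_o)}\neq 0.
\]
This removes the entire (possibly intricate) record of sign changes below the critical value from further consideration.

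The remaining step uses the parity hypothesis. Since $m(\mu_o)$ is odd and $(\deg_{\mathcal{W}_{i(j_o)}^-})^2=(G)$, the odd power collapses to the basic degree itself, $(\deg_{\mathcal{W}_{i(j_o)}^-})^{m(\mu_o)}=\deg_{\mathcal{W}_{i(j_o)}^-}$, so the jump term becomes $(G)-\deg_{\mathcal{W}_{i(j_o)}^-}$. I would then invoke the explicit list of basic degrees recalled before Theorem~\ref{th:non-auto}: in every case $\deg_{\mathcal{W}_{i(j_o)}^-}$ equals $(G)$ plus additional terms supported on proper orbit types with nonzero coefficients (e.g. $-(D_h)-(D_h^z)+(\mathbb{Z}_h)$ and its even-index analogues). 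Consequently $(G)-\deg_{\mathcal{W}_{i(j_o)}^-}$ retains these proper-orbit-type terms and is nonzero in $A(G)$, which yields $\omega_G(\alpha_{j_o,\mu_o})\neq 0$.

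I expect the genuine content, and hence the main obstacle, to lie entirely in this last reduction: the parity of $m(\mu_o)$ is essential, because an even multiplicity would let the jump term cancel to $(G)-(G)=0$, and the fact that a basic degree is never the trivial unit $(G)$ rests on the explicit equivariant-degree computations carried out in the appendix. Once $\omega_G(\alpha_{j_o,\mu_o})\neq 0$ is established, the passage to the existence of the bifurcating branch is a direct application of the general equivariant bifurcation theorem and requires no additional calculation.
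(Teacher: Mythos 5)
Your proposal is correct and follows essentially the same route as the paper: reduce via the product formula \eqref{eq:G-inv} to the jump term, use the involution property $(\deg_{\mathcal{W}_{i}^{-}})^{2}=(G)$ to see that the prefactor is a unit and that the odd power collapses to a single basic degree, and then observe from the explicit list of basic degrees that $(G)-\deg_{\mathcal{W}_{i(j_{o})}^{-}}\neq 0$ because a basic degree always carries nonzero coefficients on proper orbit types. Your write-up in fact makes explicit two points the paper only asserts (the invertibility of the product and the reason the jump term is nonzero), but the argument is the same.
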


\begin{proof}
Notice that under the assumption that $m( \mu_{o})$ is odd, we have
\[
(G)-(\deg_{\mathcal{W} _{i(j_{o})}^{-}})^{m(\mu_{o})}=(G)-(\deg_{\mathcal{W}
_{i(j_{o})}^{-}})\not =0.
\]
Since the product $\displaystyle \prod_{\alpha_{j_{k},\mu_{k}}<\alpha
_{j_{o},\mu_{o}}} (\deg_{\mathcal{W} _{i(j_{k})}^{-}})^{m(\mu_{k})}$ is an
invertible element in $A(G)$, it follows from formula \eqref{eq:G-inv} that
$\omega_{G}(\alpha_{j_{o},\mu_{o}})$ is non-zero. Therefore, by Theorem
\ref{th:comp-map-bif}, the point $( \alpha_{j_{o},\mu_{o}},0)$ is a
bifurcation point of non-trivial ${2\pi m}$-periodic solutions for \eqref{eq:sys-non-aut-bif}.
\end{proof}

\vskip.3cm Consequently, we obtain the following: \vskip.3cm

\begin{theorem}
\label{th:bif-non-eq2} Suppose $A:V\rightarrow V$ and $f:\mathbb{R}\times
V\rightarrow V$ satisfies the assumptions ($A_{1}$)---($A_{3}$) and ($B_{4}%
$). Assume that $k>0$ is odd and $\sigma(A)$ consists of exactly $k
$ different eigenvalues such that for all $(j,\mu)\not =(j^{\prime}%
,\mu^{\prime})$ the critical points $\alpha_{j,\mu}$, $\alpha_{j^{\prime}%
,\mu^{\prime}}\in\Lambda$ are also different. Suppose $m=2^{\ve_0}p_{1}^{\varepsilon
_{1}}p_{2}^{\varepsilon_{2}}\dots p_{s}^{\varepsilon_{s%
}}$, where $\ve_0\ge 0$, $\varepsilon_{l}>0$ and $p_{l}$ are the prime numbers such that
$2\leq p_{1}<p_{2}<\dots<p_{s}$. For $l=1,2,\dots,s$ put
$m_{l}:=\frac{m}{p_{l}}$. Then

\begin{itemize}
 \itemindent=2pt\labelsep=3pt\labelwidth5pt\itemsep=1pt

\item[(a)] if $p_{l}>2$ and $\rho_{l}$ is odd, then the system
\eqref{eq:bif-eq} admits an unbounded branch of ${2\pi m}$-periodic solutions
with symmetries at least $(D_{m_{l}}^{z})$, $m_{l}:=\frac m{p_{l}}$, and

\item[(b)] if $p_{1}=2$, $\varepsilon_{1}=1$, and $\rho_{1}$ is odd, then the
system \eqref{eq:bif-eq} admits an unbounded branch of ${2\pi m}$-periodic
solutions with symmetries at least $(D_{m}^{d})$, $(D_{m}^{\hat d})$, and

\item[(c)] if $p_{1}=2$ and $\varepsilon_{1}>1$, then and $\rho_{l}$ is odd,
then the system \eqref{eq:bif-eq} admits an unbounded branch of ${2\pi m}%
$-periodic solutions with symmetries at least $(D_{\frac{m}{2}}^{d})$,
$(\widetilde{D}_{\frac{m}{2}}^{d})$.
\end{itemize}
\end{theorem}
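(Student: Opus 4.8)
The plan is to reduce the entire statement to the bifurcation invariant $\omega_G$ of Theorem~\ref{th:bif-non-eq} and then read off orbit types exactly as in the preceding corollaries. First I would note that the hypothesis ``$\sigma(A)$ consists of exactly $k$ different eigenvalues'' forces every $\mu\in\sigma(A)$ to be simple, so the multiplicity $m(\mu)=1$ is odd for all $\mu$; together with the assumed distinctness of the critical values $\alpha_{j,\mu}$ this places us squarely in the situation of Theorem~\ref{th:bif-non-eq}. Consequently \emph{every} critical point $(\alpha_{j_o,\mu_o},0)$ is a genuine bifurcation point, and its bifurcation invariant is the element of $A(G)$ given by~\eqref{eq:G-inv}. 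The theorem then becomes a bookkeeping statement: for the critical points relevant to each of (a)--(c) I must exhibit an orbit type occurring with a nonzero coefficient in $\omega_G(\alpha_{j_o,\mu_o})$, since by the global alternative of Theorem~\ref{th:comp-map-bif} such a coefficient forces an unbounded branch of non-trivial $2\pi m$-periodic solutions carrying that symmetry.

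Next I would simplify the prefactor in~\eqref{eq:G-inv}. Since $m(\mu_k)=1$ throughout, it is the product $P_o:=\prod_{\alpha_{j_k,\mu_k}<\alpha_{j_o,\mu_o}}\deg_{\mathcal{V}_{i(j_k)}^-}$ of basic degrees. Using that each basic degree is an involution, $(\deg_{\mathcal{V}_i^-})^2=(G)$, and that $\deg_{\mathcal{V}_i^-}=\deg_{\mathcal{V}_{i'}^-}$ iff $\gcd(i,m)=\gcd(i',m)$, the product $P_o$ collapses to a product of \emph{distinct} basic degrees, each present or absent according to the parity of how many lower critical points carry that $\gcd$-type. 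This parity is exactly what the numbers $\rho_l$ record, in the sense of the definition of the $\rho_i$ above and of the preceding corollaries. Thus $P_o$ is determined modulo $2$ by the $\rho_l$, and $\omega_G(\alpha_{j_o,\mu_o})=P_o\big((G)-\deg_{\mathcal{V}_{i(j_o)}^-}\big)$ depends only on this parity data.

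With this in hand I would treat the three cases. For (a) I pick a critical point whose bifurcating representation $\mathcal{V}_{i(j_o)}^-$ has $\gcd(i(j_o),m)=m_l$; because $p_l=m/m_l>2$ is odd, the basic-degree list gives $(G)-\deg_{\mathcal{V}_{i(j_o)}^-}=(D_{m_l})+(D_{m_l}^z)-(\mathbb{Z}_{m_l})$, so the factor already carries $(D_{m_l}^z)$ with coefficient $+1$. The assumption that $\rho_l$ is odd is precisely what guarantees that multiplication by $P_o$ does not cancel this entry; since $(D_{m_l}^z)$ lies below the maximal type $(D_m^z)$ of Lemma~\ref{lem:max-orbG}, the branch then has symmetry at least $(D_{m_l}^z)$ (possibly $(D_m^z)$). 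Cases (b) and (c) are the even-$m$ analogues: when $p_1=2$, $\varepsilon_1=1$ the relevant factors are $(G)-\deg_{\mathcal{V}_{\mathfrak{s}+1}^-}$ and $(G)-\deg_{\mathcal{V}_{\mathfrak{s}+2}^-}$, contributing the maximal types $(D_m^d)$ and $(D_m^{\hat d})$; when $\varepsilon_1>1$ one descends one dyadic level (to $\gcd=m/4$) to obtain $(D_{m/2}^d)$ and $(\widetilde{D}_{m/2}^d)$, again maximal by Lemma~\ref{lem:max-orbG}.

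The hard part is the cancellation analysis in the third step. Because the target type $(D_{m_l}^z)$ in (a) is \emph{not} maximal, its coefficient cannot be read off a single factor; the coefficient of $(D_{m_l}^z)$ in $P_o\big((G)-\deg_{\mathcal{V}_{i(j_o)}^-}\big)$ must be computed through the recursive multiplication in $A(G)$ along the lattice of conjugacy classes of subgroups. The point to verify is that, since every basic degree squares to $(G)$, all of this bookkeeping collapses to the single parity $\rho_l\bmod 2$, so that an odd $\rho_l$ yields a surviving coefficient $\pm1\neq0$. This is the same finite computation underlying the corollaries and Lemma~\ref{lem:max-orbG}, now organized around a fixed critical value rather than the full negative spectrum; once it is in place, the existence of the stated unbounded branches follows from Theorem~\ref{th:comp-map-bif}.
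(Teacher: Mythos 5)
The paper states this theorem with no proof at all (it appears after Theorem~\ref{th:bif-non-eq} introduced only by ``Consequently, we obtain the following''), so your proposal has to be judged on its own merits, and as written it has a genuine gap at its final step. You claim that a nonzero coefficient in a single local invariant $\omega_G(\alpha_{j_o,\mu_o})$ ``forces an unbounded branch,'' citing Theorem~\ref{th:comp-map-bif}. That theorem is only the degree formula for the complementing map; the global statement is Rabinowitz's Alternative (Theorem~\ref{th:Rab}), and it offers an \emph{alternative}: the branch through $(\alpha_{j_o,\mu_o},0)$ either escapes every bounded neighborhood \emph{or} returns to the trivial line through finitely many critical points $t_1,\dots,t_n$ with $\sum_k\omega_G(t_k)=0$. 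A single nonzero $\omega_G$ does not exclude the second option. This is not a hypothetical worry: because the invariants telescope, $\omega_G(\alpha_{j_o,\mu_o})=P_o-P_{o+1}$ with $P_{o+1}=P_o\circ\deg_{\mathcal V_{i(j_o)}^-}$, consecutive invariants can cancel in pairs, and the paper's own $m=3$ computation exhibits exactly this: $\omega(\alpha_{1,0})+\omega(\alpha_{2,0})=-(H_{43})+(H_{51})+(H_{43})-(H_{51})=0$. To prove unboundedness you must show that for the specific orbit type in each of (a)--(c) the coefficient cannot cancel in \emph{any} finite sum of invariants that a bounded branch could collect, and this is precisely where the parity hypothesis on $\rho_l$ has to do its work; your proposal never carries out that step.

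A second, related gap is the identification of the parity of the prefactor $P_o$ with the numbers $\rho_l$. Those were defined in the existence section by counting isotypic multiplicities in the fixed negative spectrum $\sigma_-(\mathscr A)$, whereas here $P_o$ is a product over all critical points $\alpha_{j_k,\mu_k}<\alpha_{j_o,\mu_o}$, a set that changes with the choice of $j_o$ and $\mu_o$ and grows without bound as $\alpha_{j_o,\mu_o}$ increases (there are infinitely many $j$ with $i(j)=m_l$). You never specify which critical point is being selected in cases (a)--(c), yet the element $P_o\in A(G)$, and hence the surviving coefficient, depends on that choice. The correct bookkeeping must be organized so that the relevant parity is an invariant of the branch (e.g.\ via the telescoping identity and the jump of the coefficient of the target maximal orbit type across the window of critical points), not of one arbitrarily chosen critical value. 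The first two steps of your outline --- simplicity of the eigenvalues, formula \eqref{eq:G-inv}, the involution property $(\deg_{\mathcal V_i^-})^2=(G)$, and the $\gcd$ classification of basic degrees --- are all sound and are surely the intended ingredients; what is missing is the passage from local to global.
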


\vskip.3cm

\subsection{Bifurcation in System \eqref{eq:sys-non-aut-bif} with Additional
Symmetries $\Gamma$}

In this section we assume that $k=3$, $f:\mathbb{R}\times\mathbb{R}%
^{3}\rightarrow\mathbb{R}^{3}$ satisfies assumptions ($A_{1}$)---($A_{3}$) and ($B_{4}$)---($B_{5}$)
with $\Gamma=D_{3}$ and 
 
\begin{equation}
A=\frac{1}{4}\left[ 
\begin{array}{ccc}
-4 & -2 & -2 \\ 
-2 & -4 & -2 \\ 
-2 & -2 & -4%
\end{array}%
\right] .  \label{eq:A-mat}
\end{equation}%
Then we have 
\[
\sigma (A)=\{\mu _{0}=-2,\mu _{1}=-\frac{1}{2}\}.
\]

\paragraph{\textbf{Case $m=3$:}}

In this case $G=D_{3}\times D_{3}\times {\mathbb{Z}}_{2}$ and we have 
\[
\sigma (\mathscr A(\alpha ))=\left\{ \lambda _{j,l}:=1+\frac{9(\mu
_{l}-\alpha -1)}{j^{2}+9}:l=0,1,\;j=0,1,2,\dots \right\} 
\]%
Then%
\begin{align*}
\Lambda =& \Big\{\alpha _{0,0}=-2,\;\alpha _{0,1}=-1/2,\;\alpha _{1,0}=-%
\frac{17}{9},\;\alpha _{1,1}=-\frac{7}{18},\dots , \\
& \alpha _{j,0}=\frac{j^{2}-18}{9},\;\alpha _{j,1}=\frac{j^{2}-9/2}{9}%
,\;\dots \Big\},
\end{align*}
and
each of the critical values $\alpha_{j,i}$ is isotypicly simple, i.e. the
eigenspace $E(\lambda_{j,i})$ is an irreducible $G$-representation
$\mathcal{V}_{i(j),l}^{-}$. Maximal orbit types in $\mathbb{E}\setminus\{0\}$
are listed in \eqref{eq:D3-max-OT}. Using the same GAP code as in Example (for
$D_{3}$) in subsection \ref{ex:existence}, we can compute the exact bifurcation
invariants. Indeed, we have the following critical points from $\Lambda$
\[
\alpha_{0,0}<\alpha_{1,0}<\alpha_{2,0}<\alpha_{0,1}<\alpha
_{1,1}<\alpha_{2,1}<\alpha_{3,0}<\alpha_{3,1}<\alpha_{4,0}<\alpha_{4,1}<\alpha_{5,1}<\dots
\]
Thus
\begin{align*}
\omega(\alpha_{0,0}) &  =(G)-\deg_{\mathcal{V}_{0,0}^{-}}=(H_{67})\\
\omega(\alpha_{1,0}) &  =\deg_{\mathcal{V}_{0,0}^{-}}\circ((G)-\deg
_{\mathcal{V}_{1,0}^{-}})=-(H_{43})+(H_{51})\\
\omega(\alpha_{2,0}) &  =\deg_{\mathcal{V}_{0,0}^{-}}\circ\deg_{\mathcal{V}%
_{1,0}^{-}}\circ((G)-\deg_{\mathcal{V}_{1,0}^{-}})=(H_{43})-(H_{51})\\
\omega(\alpha_{0,1}) &  =\deg_{\mathcal{V}_{0,0}^{-}}\circ((G)-\deg
_{\mathcal{V}_{0,0}^{-}}\circ\deg_{\mathcal{V}_{2,0}^{-}}=(H_{63})+(H_{67}))\\
\omega(\alpha_{1,1}) &  =\deg_{\mathcal{V}_{2,0}^{-}}\circ((G)-\deg
_{\mathcal{V}_{1,1}^{-}})=(H_{1})-2(H_{3})+(H_{4})-(H_{5})-(H_{8})+(H_{13})\\
&  +(H_{14})-(H_{16})+(H_{18})-(H_{28})+(H_{31})\\
\omega(\alpha_{4,0}) &  =\deg_{\mathcal{V}_{2,0}^{-}}\circ\deg_{\mathcal{V}%
_{1,1}^{-}}\circ((G)-\deg_{\mathcal{V}_{1,0}^{-}})=(H_{3})-(H_{4})-(H_{7})\\
&  +(H_{8})+2(H_{16})-2(H_{18})+(H_{43})-(H_{51})\\
\omega(\alpha_{2,1}) &  =\deg_{\mathcal{V}_{2,0}^{-}}\circ\deg_{\mathcal{V}%
_{1,1}^{-}}\circ\deg_{\mathcal{V}_{1,0}^{-}}\circ((G)-\deg_{\mathcal{V}%
_{1,1}^{-}})=-(H_{1})+(H_{3})+(H_{5})\\
&  +(H_{7})-(H_{13})-(H_{14})-(H_{16})+(H_{18})+(H_{28})-(H_{31})
\end{align*}

\vskip.3cm

\vskip.3cm \appendix

\section{APPENDIX: $G$-Equivariant Brouwer Degree}

\paragraph{\textbf{Equivariant Notation:}}

For a subgroup $H$ of $G$, i.e. $H\leq G$, we denote the normalizer of $H$ in
$G$ by $N(H)$ and the Weyl group of $H$ by $W(H)=N(H)/H$. The symbol $(H) $
stands for the conjugacy class of $H$ in $G$. We put $\Phi(G):=\{(H): H\le
G\}$, i.e. $\Phi(G)$ is the set of conjugacy classes of subgroups in $G$.
$\Phi(G)$ has a natural partial order defined by $(H)\leq(K)$ iff $\exists
g\in G\;\;gHg^{-1}\leq K$. For two subgroups $L$, $H\le G$, we denote  by $n(L,H)$ the
number of different subgroups $H^{\prime}$ conjugate to $H$ such that $L\le
H^{\prime}$.
\vs

For a $G$-space $X$ and $x\in X$, we denote the {\it isotropy group} of $x
$ by $G_{x} :=\{g\in G:gx=x\}$, the {\it orbit} of $x$ by $G(x) :=\{gx:g\in
G\}$, and we call $(G_{x})$ the {\it orbit type} of $x\in X$. For a
subgroup $H\leq G$ the subspace $X^{H} :=\{x\in X:G_{x}\geq H\}$ is called the
{\it $H$-fixed-point subspace} of $X$. Clearly, $W(H)$ acts on $X^{H}$.
\vs

Consider a complete list of all irreducible $G$-representations $\mathcal{V}%
_{i}$, $i=0,$ $1,$ $\ldots$, $r $. Such list for concrete group $G$ can be
established by using GAP. Let $V$ be a finite-dimensional $G$- representation
and (without loss of generality) we may assume that $V$ is an orthogonal
representation. Then $V$ decomposes into a direct sum
\begin{equation}
V=V_{0}\oplus V_{1}\oplus\dots\oplus V_{r},\label{eq:Giso}%
\end{equation}
where each component $V_{i}$ is \textit{modeled} on the irreducible
$G$-representation $\mathcal{V}_{i}$, $i=0,1,2,\dots,r$, that is $V_{i}$
contains all the irreducible subrepresentations of $V$ equivalent to
$\mathcal{V}_{i}$. The decomposition \eqref{eq:Giso} is called $G$%
{\it-isotypic decomposition of } $V$. Denote the $\mathbb{R}$-algebra
(resp. group) of all $G$-equivariant linear (resp. invertible) operators on
$V$ by $\text{L}^{G}(V)$ (resp. $\text{GL}^{G}(V)$) . Clearly, the isotypic
decomposition \eqref{eq:Giso} induces the following direct sum decomposition
of $\text{GL}^{G}(V):$
\begin{equation}
\text{GL}^{G}(V)=\bigoplus_{i=0}^{r}\text{GL}^{G}(V_{i}),\label{eq:GLG-decomp}%
\end{equation}
where for every isotypic component $V_{i}$
\[
\text{GL}^{G}(V_{i})\simeq\text{GL}(m_{i},\mathbb{F}),\quad m_{i}%
=\text{\textrm{dim\,}}V_{i}/\text{\textrm{dim\,}}\mathcal{V}_{i}%
\]
and depending on the type of the irreducible representation $\mathcal{V}_{i},
$ $\mathbb{F}$ ($=\mathbb{R}$, $\mathbb{C}$ or $\mathbb{H}$) is a
finite-dimensional division algebra $\text{L}^{G}(\mathcal{V}_{i})$.
\vs

\subsection{Burnside Ring}

We assume that $G$ is a finite group. Denote the free abelian group generated
by $(H)\in\Phi(G)$ by $A(G):={\mathbb{Z}}[\Phi(G)]$, i.e., an element $a\in
A(G)$ can be written as a sum
\begin{align*}
a=n_{1}(H_{1})+\dots+n_{m}(H_{m}),
\end{align*}
where $n_{i}\in\mathbb{Z} $ and $(H_{i})\in\Phi(G)$. There is a natural
multiplication operation $\circ:A(G)\times A(G)\to A(G)$ which is defined on
generators $(H)$, $(K)\in\Phi(G)$ by
\begin{equation}
\label{eq:Burnside-mul}(H)\cdot(K)=\sum_{(L)\in\Phi(G)}m_{L}\,(L),
\end{equation}
where the integer $m_{L}$ represents the number of $(L)$-orbits contained in
the space $G/H\times G/K$. The numbers $m_{L}$ can be easily computed from the
following \textit{recurrence formula}
\begin{equation}
\label{eq:rec-mult}m_{L}=\frac{n(L,H)|W(H)|n(L,K)|W(K)|-\sum_{(\widetilde
L)>(L)} m_{\widetilde L}\, n(L,\widetilde L)\,|W(\widetilde L)|}{|W(L)|}.
\end{equation}
Together with the multiplication `$\circ$', $A(G)$ becomes a ring with the
unity $(G)$, which is called the {\it Burnside ring} of $G$. For more details see \cite{AED}

\subsection{Axioms of Brouwer $G$-Equivariant Degree}

\label{subsec:G-degree}
Consider an orthogonal $G$-representation $V$, a continuous $G$-map $f:V\to V
$, and an open bounded $G$-invariant set $\Omega\subset V$ such that for all
$x\in\partial\Omega$, we have $f(x)\neq0$. Then $f$ is called \textit{$\Omega
$-admissible} and $(f,\Omega)$ is called a \textit{$G$-admissible pair} (in
$V$). The set of all possible $G$-pairs will be denoted by $\mathcal{M}^{G}$. \vskip.3cm

The following result (cf \cite{AED}) can be considered as an axiomatic definition of the
{\it $G$-equivariant Brouwer degree}:

\begin{theorem}
\label{thm:GpropDeg} There exists a unique map $G\mbox{\rm -}\deg:\mathcal{M}
^{G}\to A(G)$, which assigns to every admissible $G$-pair $(f,\Omega)$ an
element $G\text{\textrm{-deg}}(f,\Omega)\in A(G)$
\begin{equation}
\label{eq:G-deg0}G\mbox{\rm -}\deg(f,\Omega)=\sum_{(H)}{n_{H}(H)}= n_{H_{1}%
}(H_{1})+\dots+n_{H_{m}}(H_{m}),
\end{equation}
satisfying the following properties:

\begin{itemize}
\item \textbf{(Existence)} If $G\mbox{\rm -}\deg(f,\Omega)\ne0$, i.e.,
$n_{H_{i}}\neq0$ for some $i$ in \eqref{eq:G-deg0}, then there exists
$x\in\Omega$ such that $f(x)=0$ and $(G_{x})\geq(H_{i})$.

\item \textbf{(Additivity)} Let $\Omega_{1}$ and $\Omega_{2}$ be two disjoint
open $G$-invariant subsets of $\Omega$ such that $f^{-1}(0)\cap\Omega
\subset\Omega_{1}\cup\Omega_{2}$. Then
\begin{align*}
G\mbox{\rm -}\deg(f,\Omega)=G\mbox{\rm -}\deg(f,\Omega_{1})+G\mbox{\rm -}\deg
(f,\Omega_{2}).
\end{align*}

\item \textbf{(Homotopy)} If $h:[0,1]\times V\to V$ is an $\Omega$-admissible
$G$-homotopy, then
\begin{align*}
G\mbox{\rm -}\deg(h_{t},\Omega)=\mathrm{constant}.
\end{align*}

\item \textbf{(Normalization)} Let $\Omega$ be a $G$-invariant open bounded
neighborhood of $0$ in $V$. Then
\begin{align*}
G\mbox{\rm -}\deg(\id,\Omega)=(G).
\end{align*}

\item \textbf{(Multiplicativity)} For any $(f_{1},\Omega_{1}),(f_{2}%
,\Omega_{2})\in\mathcal{M} ^{G}$,
\begin{align*}
G\mbox{\rm -}\deg(f_{1}\times f_{2},\Omega_{1}\times\Omega_{2})= G\mbox{\rm
-}\deg(f_{1},\Omega_{1})\circ G\mbox{\rm -}\deg(f_{2},\Omega_{2}),
\end{align*}
where the multiplication `$\circ$' is taken in the Burnside ring $A(G )$.

\item \textbf{(Suspension)} If $W$ is an orthogonal $G$-representation and
$\mathscr B$ is an open bounded invariant neighborhood of $0\in W$, then
\begin{align*}
G\mbox{\rm -}\deg(f\times\id_{W},\Omega\times\mathscr B)=G\mbox{\rm -}\deg
(f,\Omega).
\end{align*}

\item \textbf{(Recurrence Formula)} For an admissible $G$-pair $(f,\Omega)$,
the $G$-degree \eqref{eq:G-deg0} can be computed using the following
recurrence formula
\begin{equation}
\label{eq:RF-0}n_{H}=\frac{\deg(f^{H},\Omega^{H})- \sum_{(K)>(H)}{n_{K}\,
n(H,K)\, \left|  W(K)\right|  }}{\left|  W(H)\right|  },
\end{equation}
where $\left|  X\right|  $ stands for the number of elements in the set $X$
and $\deg(f^{H},\Omega^{H})$ is the Brouwer degree of the map $f^{H}
:=f|_{V^{H}}$ on the set $\Omega^{H}\subset V^{H}$.
\end{itemize}
\end{theorem}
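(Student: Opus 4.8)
The plan is to take the Recurrence Formula as the \emph{definition} of the degree and then verify that the resulting assignment obeys all the remaining axioms; uniqueness then comes almost for free. Since $\Phi(G)$ is a finite partially ordered set, I would define the coefficients $n_H$ of \eqref{eq:G-deg0} by downward induction on $(H)$: for a maximal class the correction sum in \eqref{eq:RF-0} is empty, so $n_H:=\deg(f^H,\Omega^H)/|W(H)|$, while for a general class $n_H$ is given by the full formula \eqref{eq:RF-0}, which refers only to already-defined coefficients $n_K$ with $(K)>(H)$ and to the classical Brouwer degrees $\deg(f^K,\Omega^K)$ of the restricted maps $f^K=f|_{V^K}$ on $\Omega^K\subset V^K$. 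Setting $\gdeg(f,\Omega):=\sum_{(H)}n_H(H)$, \textbf{uniqueness} is immediate: any map satisfying the Recurrence Formula is forced to have exactly these coefficients, and the ordinary Brouwer degrees $\deg(f^H,\Omega^H)$ are themselves uniquely determined.

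The main obstacle is \textbf{integrality}, i.e.\ showing that each $n_H$ produced by \eqref{eq:RF-0} lies in $\mathbb{Z}$. Using homotopy invariance of the Brouwer degree on every slice $V^H$, I would first reduce to a \emph{regular normal} $G$-map, namely a smooth $G$-map having $0$ as a regular value on each fixed-point space and whose zero set in $\Omega$ is a finite union of $G$-orbits. For such $f$ the zeros lying in $V^H$ with isotropy \emph{exactly} $H$ form the principal stratum, and since any $n\in N(H)$ fixing such a point would lie in its isotropy group, $W(H)=N(H)/H$ acts \emph{freely} there. As $W(H)$ acts linearly on $V^H$ and $f^H$ is $W(H)$-equivariant, the local index of $f^H$ is constant along each free orbit, so these zeros contribute a multiple of $|W(H)|$ to $\deg(f^H,\Omega^H)$. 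The remaining zeros in $V^H$ have strictly larger isotropy and are accounted for by the term $\sum_{(K)>(H)}n_K\,n(H,K)\,|W(K)|$, where $n(H,K)$ records how many conjugates of $K$ contain $H$; subtracting it leaves $|W(H)|$ times an integer. This gives $n_H\in\mathbb{Z}$ and simultaneously identifies $n_H$ as the signed count of $G$-orbits of type exactly $(H)$.

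The remaining axioms are then checked by pushing the corresponding property of the Brouwer degree through the linear formula \eqref{eq:RF-0}. \textbf{Additivity} and \textbf{Homotopy} descend directly from the same properties of $\deg(\cdot,\cdot)$ on each $V^H$. For \textbf{Normalization}, $f=\id$ gives $\deg(\id^H,\Omega^H)=1$ for every $(H)$; a short downward induction then yields $n_G=1$ and forces every lower $n_H$ to cancel, so $\gdeg(\id,\Omega)=(G)$. The \textbf{Existence} property is the contrapositive of the remark that a zero in $V^H$ is exactly a solution with isotropy $\ge(H)$: if no solution of type $\ge(H)$ exists, then $\deg(f^K,\Omega^K)=0$ for all $(K)\ge(H)$ and \eqref{eq:RF-0} gives $n_H=0$. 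For \textbf{Suspension} one uses $(V\oplus W)^H=V^H\oplus W^H$ together with the product formula to obtain $\deg\big((f\times\id_W)^H,(\Omega\times\mathscr B)^H\big)=\deg(f^H,\Omega^H)$, so every $n_H$ is unchanged.

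The subtlest axiom is \textbf{Multiplicativity}, where the ring structure of $A(G)$ enters essentially. Reducing both pairs to regular normal maps, the zero set of $f_1\times f_2$ on $\Omega_1\times\Omega_2$ is the product of the two orbit-decomposed zero sets, and the splitting of a product orbit $G/H\times G/K$ into $G$-orbits is governed precisely by the integers $m_L$ of \eqref{eq:Burnside-mul}, computed through \eqref{eq:rec-mult}. Since local indices multiply under products, matching them against this orbit bookkeeping yields $\gdeg(f_1\times f_2,\Omega_1\times\Omega_2)=\gdeg(f_1,\Omega_1)\circ\gdeg(f_2,\Omega_2)$, with the multiplication taken in $A(G)$. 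This combinatorial matching is where I expect the bulk of the technical work to lie, and it is also where the definition of the Burnside product is forced upon us by the geometry.
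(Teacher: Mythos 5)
The paper does not prove this statement at all: Theorem \ref{thm:GpropDeg} is imported verbatim from the cited monograph \cite{AED} as an axiomatic characterization of the equivariant degree, so there is no in-paper proof to compare against. Your outline is, however, a correct sketch of the standard construction found in that reference, with one organizational inversion: \cite{AED} builds the degree geometrically (signed count of $G$-orbits of zeros of a regular normal approximation) and then \emph{derives} the recurrence formula, whereas you take \eqref{eq:RF-0} as the definition and recover the orbit-count interpretation as the integrality statement. The two routes are logically equivalent, and your version makes uniqueness trivial at the cost of making integrality the central burden. Your integrality argument is the right one --- the index of $f^H$ is constant along a $W(H)$-orbit because $Df^H(wx_0)=wDf^H(x_0)w^{-1}$, free orbits of exact type $(H)$ contribute multiples of $|W(H)|$, and an orbit of type $(K)>(H)$ meets $V^H$ in exactly $n(H,K)|W(K)|$ points --- but two ingredients you invoke deserve flagging as the real technical content: (i) the equivariant approximation theorem guaranteeing that every admissible $G$-pair is admissibly $G$-homotopic to a regular \emph{normal} one, and (ii) the normality condition itself, which is what ensures that the local index of a type-$(K)$ zero computed inside $V^H$ agrees with the one computed inside $V^K$, without which the subtraction in \eqref{eq:RF-0} would not be consistent across strata. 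You correctly name ``regular normal'' but do not explain why normality (rather than mere regularity) is needed; that is the one place a reader could not reconstruct the argument from your sketch alone. The verification of Normalization, Existence, Suspension, Additivity and Homotopy is routine as you describe, and your identification of Multiplicativity with the orbit decomposition of $G/H\times G/K$ encoded in \eqref{eq:Burnside-mul} is exactly how the Burnside-ring product enters in \cite{AED}.
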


The $G\text{\textrm{-deg}}(f,\Omega)$ is called the {\it $G$-equivariant
Brouwer degree} (or simply {\it $G$-degree}) of $f$ in $\Omega$.

\subsection{Computation of $\Gamma$-Equivariant Degree}

Put $B(V):=\left\{  x\in V:\left|  x\right|  <1\right\}  $. For each
irreducible $G$-representation $\mathcal{V} _{i}$, $i=0,1,2,\dots$, we define
\begin{align*}
\deg_{\mathcal{V}_{i}}:=G\mbox{\rm -}\deg(-\id,B(\mathcal{V} _{i})),
\end{align*}
and will call $\deg_{\mathcal{V} _{i}}$ the \emph{basic degree}.
\vs

Consider a $G$-equivariant linear isomorphism $T:V\to V$ and assume that $V$
has a $G$-isotypic decomposition \eqref{eq:Giso}. Then by the Multiplicativity
property, we have
\begin{equation}
\label{eq:prod-prop}G\mbox{\rm -}\deg(T,B(V))=\prod_{i=0}^{r}G\mbox{\rm -}\deg
(T_{i},B(V_{i}))= \prod_{i=0}^{r}\prod_{\mu\in\sigma_{-}(T)} \left(
\deg_{\mathcal{V} _{i}}\right)  ^{m_{i}(\mu)}%
\end{equation}
where $T_{i}=T|_{V_{i}}$ and $\sigma_{-}(T):=\left\{  \mu\in\sigma(T):\mu<0\right\}$ denotes the real negative
spectrum of $T$.
 \vskip.3cm

Notice that the basic degrees can be effectively computed from
\eqref{eq:RF-0}, i.e. formula
\begin{align*}
\deg_{\mathcal{V} _{i}}=\sum_{(H)}n_{H}(H),
\end{align*}
where
\begin{equation}
\label{eq:bdeg-nL}n_{H}=\frac{(-1)^{\dim\mathcal{V} _{i}^{H}}- \sum
_{H<K}{n_{K}\, n(H,K)\, \left|  W(K)\right|  }}{\left|  W(H)\right|  }.
\end{equation}

\vs
\section{Local and Global Bifurcation Problems}

Assume that $a<b$, $V$ is an orthogonal $G$-representation and $\Omega\subset
V$ is an open bounded $G$-invariant subset. Let $f:\mathbb{R}\oplus V \to V$
be a continuous $G$-equivariant map such that $(f_{a},\Omega)$, $(f_{b}%
,\Omega)\in\mathcal{M}^{G}( V)$, where $f_{t}(x):=f(t,x)$, $t\in\mathbb{R}$,
$x\in V$. Then a continuous $G$-invariant function $\varphi:\mathbb{R}\oplus
V\to\mathbb{R}$ will be called \textit{$\Omega$-complementing function} for
$f_{t}$ at $t=a$, $b$, if
\begin{equation}
\label{eq:vp-comp}%
\begin{cases}
\varphi(t,x)<0 & \;\text{ if }\; t=a,\, b, \; x\in\Omega\\
\varphi(t,x)>0 & \;\text{ if }\; t=(a,b), \; x\in\partial\Omega.
\end{cases}
\end{equation}
In such a case we define the map $F_{\varphi}:\mathbb{R}\oplus V\to
\mathbb{R}\oplus V$ by
\begin{equation}
\label{eq:comp-map}F_{\varphi}(t,x)=(\varphi(t,x),f(t,x)), \quad
t\in\mathbb{R}, \; x\in V.
\end{equation}
The following result is well-known in non-equivariant case.  \vskip.3cm

\begin{theorem}
\label{th:comp-map-bif} Suppose that $f:\mathbb{R}\oplus V\to V$ is a
$G$-equivariant map such that $(f_{a},\Omega)$, $(f_{b},\Omega)\in
\mathcal{M}^{G}( V) $, and $\varphi:\mathbb{R}\oplus V\to V$ is an $\Omega
$-complementing function for $f_{t}$ at $t=a$, $b$. Then $(F_{\varphi
},(a,b)\times\Omega)\in\mathcal{M}^{G}$, the $G$-equivariant degree
$G\text{\textrm{-deg}}(F_{\varphi},(a,b)\times\Omega)$ doesn't depend on the
choice of the $\Omega$-complementing function $\varphi$ and we have
\begin{equation}
\label{eq:bif-G-deg}\gdeg(F_{\varphi},(a,b)\times
\Omega)=\gdeg(f_{a},\Omega)-\gdeg(f_{b}%
,\Omega).
\end{equation}

\end{theorem}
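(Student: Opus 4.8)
The plan is to establish the three assertions in the order stated, treating admissibility and $\varphi$-independence as routine and reserving the degree formula as the substantive step. I would first record that $\overline{(a,b)\times\Omega}=[a,b]\times\overline\Omega$, so that
\[
\partial\big((a,b)\times\Omega\big)=\big(\{a,b\}\times\overline\Omega\big)\cup\big([a,b]\times\partial\Omega\big).
\]
On a boundary point $(t,x)$ exactly one of three situations occurs. If $x\in\Omega$ then necessarily $t\in\{a,b\}$, and \eqref{eq:vp-comp} gives $\varphi(t,x)<0$, so the first coordinate of $F_\varphi(t,x)$ is nonzero. If $x\in\partial\Omega$ and $t\in(a,b)$, then $\varphi(t,x)>0$, so again the first coordinate does not vanish. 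Finally, if $x\in\partial\Omega$ and $t\in\{a,b\}$, then $f(t,x)=f_a(x)$ or $f_b(x)$, which is nonzero by the $\Omega$-admissibility of $(f_a,\Omega)$ and $(f_b,\Omega)$. Hence $F_\varphi\neq0$ on the whole boundary. Since $\varphi$ is $G$-invariant and $f$ is $G$-equivariant while $G$ acts trivially on the $\mathbb{R}$-factor, $F_\varphi$ is $G$-equivariant and $(a,b)\times\Omega$ is $G$-invariant; thus $(F_\varphi,(a,b)\times\Omega)\in\mathcal{M}^{G}$.

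For independence from $\varphi$, given two $\Omega$-complementing functions $\varphi_0,\varphi_1$ I would use the convex combination $\varphi_s:=(1-s)\varphi_0+s\varphi_1$. The inequalities \eqref{eq:vp-comp} are preserved under convex combination (a convex combination of negative numbers is negative, and likewise for positive numbers), so each $\varphi_s$ is again $\Omega$-complementing and the boundary analysis above applies uniformly in $s$. Therefore $(s,(t,x))\mapsto F_{\varphi_s}(t,x)$ is an $((a,b)\times\Omega)$-admissible $G$-homotopy, and the Homotopy axiom of Theorem \ref{thm:GpropDeg} yields equality of the two degrees. (Independence also follows a posteriori from the formula below, whose right-hand side contains no reference to $\varphi$.)

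For the formula I would reduce the equivariant identity to its non-equivariant counterpart, which is assumed known. For each $(H)\in\Phi(G)$ pass to the fixed-point space $V^H$: since the $\mathbb{R}$-factor is $G$-fixed, $((a,b)\times\Omega)^H=(a,b)\times\Omega^H$ with $\Omega^H:=\Omega\cap V^H$, and the restriction satisfies $(F_\varphi)^H=(\varphi^H,f^H)$, where $f^H:=f|_{\mathbb{R}\oplus V^H}$ takes values in $V^H$ by equivariance. One checks that $\partial_{V^H}\Omega^H\subseteq\partial_V\Omega$, so $\varphi^H$ is an $\Omega^H$-complementing function for $f^H$ and $f_a^H,f_b^H$ are $\Omega^H$-admissible; the non-equivariant result then gives, for every $H$,
\[
\deg\big((F_\varphi)^H,(a,b)\times\Omega^H\big)=\deg(f_a^H,\Omega^H)-\deg(f_b^H,\Omega^H).
\]
Now I would feed this into the Recurrence Formula \eqref{eq:RF-0}. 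That formula determines the coefficients $n_H$ from the Brouwer degrees of the fixed-point maps by solving a triangular linear system over the finite poset $\Phi(G)$; concretely, writing $n_H$, $n_H^a$, $n_H^b$ for the coefficients of $\gdeg(F_\varphi,(a,b)\times\Omega)$, $\gdeg(f_a,\Omega)$, $\gdeg(f_b,\Omega)$, the maximal orbit types have empty correction sum and the displayed identity gives $n_H=n_H^a-n_H^b$ there, while for general $(H)$ the correction terms $\sum_{(K)>(H)}n_K\,n(H,K)|W(K)|$ agree under the inductive hypothesis $n_K=n_K^a-n_K^b$ and the same numerator identity propagates $n_H=n_H^a-n_H^b$. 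Because the combinatorial data $n(H,K),|W(K)|$ depend only on $G$, this downward induction is just the statement that the passage from fixed-point degrees to coefficients is linear, so $\gdeg(F_\varphi,(a,b)\times\Omega)=\gdeg(f_a,\Omega)-\gdeg(f_b,\Omega)$, which is \eqref{eq:bif-G-deg}.

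The main obstacle is the content of the last step: verifying carefully that the complementing conditions and the admissibility genuinely descend to each fixed-point subspace $V^H$ (in particular the inclusion $\partial_{V^H}\Omega^H\subseteq\partial_V\Omega$), so that the non-equivariant theorem is legitimately applicable there, and then tracking the Recurrence Formula through the induction. The analytic heart — the non-equivariant jump-of-degree identity — is taken as known, so the equivariant proof is essentially an organizational argument built on top of it.
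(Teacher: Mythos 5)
Your argument is correct and complete. Note that the paper itself offers no proof of Theorem \ref{th:comp-map-bif} at all --- it merely remarks that the result is well-known in the non-equivariant case and states the equivariant version --- so there is no ``paper proof'' to compare against; what you have written is the standard way to supply the missing argument, and it is consistent with how the paper's degree is built (via the fixed-point data and the Recurrence Formula \eqref{eq:RF-0}). The two places where such a proof can silently go wrong are both handled properly: you treat the corner $\{a,b\}\times\partial\Omega$ of the boundary, where neither inequality in \eqref{eq:vp-comp} applies and admissibility must come from $f_a$, $f_b$ rather than from $\varphi$; and you verify $\partial_{V^H}\Omega^H\subseteq\partial_V\Omega$ before invoking the non-equivariant identity on $V^H$, which is exactly what legitimizes the downward induction through the triangular system defining the coefficients $n_H$. (One cosmetic point: the theorem's statement misprints the target of $\varphi$ as $V$; you correctly use the real-valued version from the definition of a complementing function.)
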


\vskip.3cm Let $f:\mathbb{R}\times V\to V$ be a continuous $G$-equivariant map
such that for all $t\in\mathbb{R}$, $f(t,0)=0$. We are interested in solutions
of the equation
\begin{equation}
\label{eq:bif-G-para}f(t,x)=0.
\end{equation}
Clearly any pair $(t,0)$ satisfies \eqref{eq:bif-G-para}, thus we will call
them a \textit{trivial solutions} to \eqref{eq:bif-G-para}. All other
solutions to \eqref{eq:bif-G-para} will be called \textit{nontrivial}. We
denote the set of all nontrivial solutions to \eqref{eq:bif-G-para} by
$\mathscr S$, i.e.
\[
\mathscr S:=\{(t,x)\in\mathbb{R}\times\mathbb{R}^{n}\oplus V: f(t,x)=0\;\text{
and } x\not =0 \}.
\]

\vs
\begin{definition}\rm
Let $\mathcal{C}\subset\mathscr S$ and $\mathscr U\subset\mathbb{R}\times V\to
V$ be a $G$-invariant open subset. The set $\mathcal{C}\subset\mathscr S$ is
called a \textit{branch} of nontrivial solutions to \eqref{eq:bif-G-para} in
$\mathscr U$ if $\overline{\mathcal{C}}$ is a connected component of
$\overline{\mathscr S} \cap\overline{\mathscr U}$. Moreover, we say that the
branch $\mathcal{C}$ bifurcates from a trivial solution $(t_{o},0)$ if
$(t_{o},0)\in\overline{\mathcal{C}}$.
\end{definition}

\vskip.3cm


\begin{theorem}
\label{th:continuum} Let $V$ be an orthogonal $G$-representation and
$\Omega\subset V$ be a $G$-invariant bounded open set. Assume that
$f:[0,1]\times V\to V$ is a continuous $G$-equivariant map such that for every
$t\in[0,1]$, $(f_{t},\Omega)$ is an admissible $G$-pair and $\deg_{G}%
(f_{0},\Omega)\not =0$. Then there exists a compact connected set
$K_{o}\subset f^{-1}(0)\cap[0,1]\times\Omega$ such that
\[
K_{o}\cap( \{0\}\times\Omega)\not =\emptyset\not =K_{o}\cap( \{1\}\times
\Omega).
\]

\end{theorem}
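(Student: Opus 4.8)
The plan is to run the classical Leray--Schauder--Rabinowitz continuation argument, transplanted to the equivariant degree: combine the homotopy invariance of $\gdeg$ with a Whyburn-type separation lemma for compact metric spaces. First I would record the reductions that come for free. Since $f$ is an $\Omega$-admissible $G$-homotopy (there are no zeros on $[0,1]\times\partial\Omega$), the Homotopy axiom gives that $\gdeg(f_t,\Omega)$ is constant in $t$, hence equal to $\gdeg(f_0,\Omega)\neq0$; by the Existence property both end-slices therefore carry zeros. Set $S:=f^{-1}(0)\cap([0,1]\times\overline{\Omega})$. As $f^{-1}(0)$ is closed and $[0,1]\times\overline{\Omega}$ is compact, $S$ is compact, and admissibility forces $S\subset[0,1]\times\Omega$. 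Put $A:=S\cap(\{0\}\times\Omega)$ and $B:=S\cap(\{1\}\times\Omega)$; these are nonempty, compact, disjoint, and $G$-invariant (the action being trivial on the $[0,1]$ factor).

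Next I would argue by contradiction, assuming that no connected subset of $S$ meets both $A$ and $B$. The Whyburn separation lemma then yields a clopen decomposition $S=P\sqcup(S\setminus P)$ with $A\subset P$ and $B\cap P=\emptyset$. To keep the decomposition $G$-invariant I would average over the finite group: since each $gP$ is clopen, contains $gA=A$, and is disjoint from $gB=B$, the set $S_A:=\bigcap_{g\in G}gP$ is clopen, $G$-invariant, contains $A$, and is disjoint from $B$; I set $S_B:=S\setminus S_A$, so that $S=S_A\sqcup S_B$ with $A\subset S_A$, $B\subset S_B$, both compact and $G$-invariant.

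I would then build a $G$-invariant tube around $S_A$ and derive a numerical contradiction. Using the $G$-invariant product metric on $[0,1]\times V$ (the norm on the orthogonal representation $V$, trivial factor on $[0,1]$), choose $\delta>0$ smaller than both $\tfrac12\,\mathrm{dist}(S_A,S_B)$ and $\mathrm{dist}\big(S_A,\,([0,1]\times V)\setminus([0,1]\times\Omega)\big)$, and set $\mathcal{U}:=\{z:\mathrm{dist}(z,S_A)<\delta\}$. Then $\mathcal{U}$ is bounded, open, $G$-invariant, satisfies $\overline{\mathcal{U}}\cap S=S_A$ and $\mathcal{U}\subset[0,1]\times\Omega$, and $f\neq0$ on $\partial\mathcal{U}$. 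Passing to the slices $\mathcal{U}_t:=\{x:(t,x)\in\mathcal{U}\}$, the slice $\mathcal{U}_0\subset\Omega$ contains every zero of $f_0$ in $\overline{\Omega}$, so by additivity/excision $\gdeg(f_0,\mathcal{U}_0)=\gdeg(f_0,\Omega)\neq0$; on the other hand $\mathcal{U}_1$ contains no zero of $f_1$ (any such zero would lie in $S\cap\overline{\mathcal{U}}=S_A$, yet it belongs to $B\subset S_B$), whence $\gdeg(f_1,\mathcal{U}_1)=0$ by the Existence property. Finally, the moving-domain (generalized) homotopy invariance applied to $\mathcal{U}$ gives $\gdeg(f_0,\mathcal{U}_0)=\gdeg(f_1,\mathcal{U}_1)$, contradicting $0\neq0$. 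Hence some connected subset of $S$ meets both $A$ and $B$; taking the connected component $K_o$ of $S$ containing it, $K_o$ is closed in the compact space $S$, hence compact and connected, lies in $f^{-1}(0)\cap([0,1]\times\Omega)$, and meets both $\{0\}\times\Omega$ and $\{1\}\times\Omega$, as required.

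The main obstacle I anticipate is the equality $\gdeg(f_0,\mathcal{U}_0)=\gdeg(f_1,\mathcal{U}_1)$: this generalized homotopy invariance over a \emph{varying} slice domain is not literally one of the axioms listed in Theorem~\ref{thm:GpropDeg}, so I would need to derive it from the Homotopy and Additivity properties by the standard compactness argument, covering the compact projection of $f^{-1}(0)\cap\overline{\mathcal{U}}$ onto $[0,1]$ by finitely many intervals on each of which the zero set sits inside a fixed cylindrical subtube, and invoking excision to show $t\mapsto\gdeg(f_t,\mathcal{U}_t)$ is locally constant. The only other point requiring care is the $G$-invariance of the Whyburn separation, handled above by the finite intersection $\bigcap_{g\in G}gP$.
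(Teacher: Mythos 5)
Your argument is correct and is the standard Whyburn--separation continuation proof; the paper itself states Theorem~\ref{th:continuum} without proof, treating it as a known consequence of the properties listed in Theorem~\ref{thm:GpropDeg}. The one step genuinely beyond those axioms --- homotopy invariance of $\gdeg(f_t,\mathcal{U}_t)$ over the varying slice domains --- you correctly single out and reduce to the Homotopy and Additivity (excision) properties via the usual compactness argument, and your finite intersection $\bigcap_{g\in G}gP$ legitimately restores the $G$-invariance of the separation, since the groups considered here are finite.
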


\vskip.3cm Let us discuss the global bifurcation problem for
\eqref{eq:bif-G-para}. Under the assumption that the derivative $D_{x}f(t,0)$
exists for all $t\in\mathbb{R}$ and the map $t\mapsto D_{x}f(t,0)$ is
continuous, one can easily show that if $(t_{o},0)$ is a bifurcation point for
\eqref{eq:bif-G-para}, then $D_{x}f(t_{o},0)$ is not an isomorphism. Denote
the set of all $t\in\mathbb{R}$ such that $(t,0)$ is a bifurcation point of
\eqref{eq:bif-G-para} by $\mathscr B$ and put
\begin{equation}
\label{eq:critB}\Lambda:=\{t\in\mathbb{R}: \det D_{x} f(t,0)=0\}.
\end{equation}
$\Lambda$ is called the set of \textit{critical points} for
\eqref{eq:bif-G-para} and we clearly have $\mathscr B\subset\Lambda$. \vskip.3cm

The following result is called \textit{Rabinowitz's Alternative}: \vskip.3cm

\begin{theorem}
\label{th:Rab} Suppose that $f:\mathbb{R}\oplus V\to V$ is a continuous
$G$-equivariant map such that $f(t,0)=0$ for all $t\in\mathbb{R}$ and
$D_{x}f(t,0)$ exists and is continuous with respect to $t\in\mathbb{R}$. We
also assume that the set of critical points $\Lambda$ for
\eqref{eq:bif-G-para} (given by \eqref{eq:critB}) is discrete, and consider an
open bounded $G$-invariant set $\mathcal{U}\subset\mathbb{R}\oplus V$ such
that $(t_{o},0)\in\mathcal{U}$ for some $t_{o}\in\Lambda$. For a connected
component $\mathscr C$ of the set $\overline{\mathcal{U}}\cap\overline
{\mathscr S}$ such that $(t_{o},0)\in\mathscr C$ we have the following alternative.

\begin{itemize}
 \itemindent=2pt\labelsep=3pt\labelwidth5pt\itemsep=1pt

\item[(i)] ~~either $\mathscr C\cap\partial U\not = \emptyset$,

\item[(ii)] ~~or $\mathscr C\cap(\Lambda\times\{0\})=\{ (t_{1},0),(t_{2}%
,0),\dots,(t_{n},0)\}$ for some $n\in{\mathbb{N}}$ (here $t_{j}\not =t_{k}$
for $j\not = k$) and
\begin{equation}
\label{eq:Rab}\sum_{k=1}^{n} \omega_{G}(t_{k})=0.
\end{equation}

\end{itemize}
\end{theorem}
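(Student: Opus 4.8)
The plan is to prove the alternative by contradiction: assume that (i) fails, so $\mathscr C\cap\partial\mathcal U=\emptyset$, and deduce that the identity $\sum_{k}\omega_G(t_k)=0$ of (ii) must hold. First I would record that, since $\mathcal U$ is bounded and $f$ is a completely continuous field (so $\overline{\mathscr S}\cap\overline{\mathcal U}$ is compact), the component $\mathscr C$ is a compact connected set. Because any trivial point $(t,0)\in\overline{\mathscr S}$ is necessarily a bifurcation point and hence lies in $\Lambda\times\{0\}$, and $\Lambda$ is discrete, the intersection $\mathscr C\cap(\Lambda\times\{0\})$ is finite; I label its points $(t_1,0),\dots,(t_n,0)$. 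These are exactly the critical points figuring in (ii), and each carries the local invariant $\omega_G(t_k)$ defined through the complementing map of Theorem \ref{th:comp-map-bif}.

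The topological heart is to isolate $\mathscr C$. Since $\mathscr C$ is a connected component of the compact metric space $K:=\overline{\mathscr S}\cap\overline{\mathcal U}$ and is disjoint from the closed set $K\cap\partial\mathcal U$, Whyburn's separation lemma yields a relatively open-and-closed subset $K_0\subseteq K$ with $\mathscr C\subseteq K_0$ and $K_0\cap\partial\mathcal U=\emptyset$. Using the $G$-invariance of the whole setup (averaging a Urysohn function over $G$, which acts trivially on the $\mathbb R$-factor and orthogonally on $V$), I would then thicken $K_0$ to a bounded open $G$-invariant set $\mathscr N$ with $K_0\subset\mathscr N$, $\overline{\mathscr N}\subset\mathcal U$, $\mathscr N\cap(\Lambda\times\{0\})=\{(t_1,0),\dots,(t_n,0)\}$, and crucially $\partial\mathscr N\cap\overline{\mathscr S}=\emptyset$, the end-points of the axis-segments $\mathscr N\cap(\mathbb R\times\{0\})$ being non-critical parameter values.

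With $\mathscr N$ in hand I would evaluate a single equivariant degree in two ways. For any complementing function $\varphi$ for $\mathscr N$ the map $F_\varphi$ of \eqref{eq:comp-map} is $\mathscr N$-admissible (a zero of $F_\varphi$ forces $f=0$, i.e.\ a point of $\overline{\mathscr S}$, which cannot lie on $\partial\mathscr N$); moreover the linear homotopy between two complementing functions keeps $F_\varphi$ admissible, so $\gdeg(F_\varphi,\mathscr N)$ does not depend on the choice of $\varphi$. On one hand, choosing $\varphi$ of product type on small pairwise disjoint $G$-invariant boxes around each $(t_k,0)$ and arranging $F_\varphi$ to be zero-free off these boxes, additivity together with Theorem \ref{th:comp-map-bif} gives $\gdeg(F_\varphi,\mathscr N)=\sum_{k=1}^n\omega_G(t_k)$. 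On the other hand, because $\Sigma:=\overline{\mathscr S}\cap\overline{\mathscr N}$ is compact and bounded away from the lateral boundary of $\mathscr N$, I can choose a complementing function that is strictly negative on all of $\Sigma$; for that choice $F_\varphi$ has no zeros in $\overline{\mathscr N}$, whence $\gdeg(F_\varphi,\mathscr N)=0$. Comparing the two evaluations yields $\sum_{k=1}^n\omega_G(t_k)=0$, which is precisely alternative (ii).

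I expect the main obstacle to be the simultaneous construction of the $G$-invariant isolating tube $\mathscr N$ and of the two complementing functions — in particular verifying that one may take $\varphi$ strictly negative on the whole solution set $\Sigma$ while keeping it positive on the lateral boundary and negative on the axis end-caps, which is exactly where the hypothesis $\mathscr C\cap\partial\mathcal U=\emptyset$ (forcing $\Sigma$ to remain interior) enters. Equivariance adds the bookkeeping of making every auxiliary set and function $G$-invariant, but since $G$ is finite this is handled by averaging, and the degree-theoretic steps then reduce to the additivity and homotopy axioms already available.
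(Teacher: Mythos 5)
The paper offers no proof of this theorem: it is quoted as the classical ``Rabinowitz's Alternative'' in the appendix, with the reader implicitly referred to the equivariant degree literature (e.g.\ \cite{AED}), so there is nothing internal to compare your argument against. That said, your proposal is a correct reconstruction of the standard Ize-type proof: negate (i), use compactness of $\overline{\mathscr S}\cap\overline{\mathcal U}$ and Whyburn's separation lemma to build a $G$-invariant isolating neighborhood $\mathscr N$ of $\mathscr C$ with $\partial\mathscr N\cap\overline{\mathscr S}=\emptyset$, and then evaluate $\gdeg(F_\varphi,\mathscr N)$ once by localizing to product boxes around the finitely many critical points (giving $\sum_k\omega_G(t_k)$ by additivity and Theorem~\ref{th:comp-map-bif}) and once with a globally admissible choice of $\varphi$ that kills all zeros (giving $0$), the two being linked by an admissible linear homotopy in $\varphi$. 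One small imprecision: in your second evaluation you only require $\varphi<0$ on $\Sigma=\overline{\mathscr S}\cap\overline{\mathscr N}$, but the zero set of $f$ in $\overline{\mathscr N}$ also contains the entire trivial-axis segment $(\mathbb{R}\times\{0\})\cap\overline{\mathscr N}$ (since $f(t,0)\equiv 0$), not just the bifurcation points lying in $\Sigma$; you must therefore insist that $\varphi$ is nonvanishing along that whole segment as well (e.g.\ take $\varphi\equiv-1$), and likewise that the homotopy between the two complementing functions stays negative there. With that adjustment, and the routine averaging over the finite group $G$ to keep all auxiliary sets and functions invariant, the argument is complete.
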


\vskip.3cm

\end{document}